\newtheorem{rem}{Remark}[section]
\renewcommand\arraystretch{1.5}
    \title{Global error estimates of high-order fully decoupled schemes for the Cahn-Hilliard-Navier-Stokes model of Two-Phase Incompressible Flows
    \thanks{This work is supported by the National Natural Science Foundation of China  grants  12271302, 11971407 and 12131014, and by the Hong Kong Polytechnic University Postdoctoral Research Fund 1-W22P.}
}
    \author{ Xiaoli Li
        \thanks{School of Mathematics, Shandong University, Jinan, Shandong, 250100, P.R. China. Email: xiaolimath@sdu.edu.cn}.
         \and Nan Zheng
         \thanks{Department of Applied Mathematics, The Hong Kong Polytechnic University, Hung Hom, Kowloon, Hong Kong. Email: znan2017@163.com}.
        \and Jie Shen 
         \thanks{Corresponding Author. Eastern Institute of Technology, Ningbo, China, and Department of Mathematics, Purdue University, West Lafayette,  USA. Email: shen7@purdue.edu}.
                 \and Zhengguang Liu
        \thanks{School of Mathematics and Statistics, Shandong Normal University, Jinan, Shandong, 250358, China. Email: liuzhgsdu@yahoo.com}.
}
\begin{document}
\maketitle

\begin{abstract}
In this paper we construct new fully decoupled and high-order implicit-explicit (IMEX) schemes for the two-phase incompressible flows based on the new generalized scalar auxiliary variable approach with optimal energy approximation (EOP-GSAV) for Cahn-Hilliard equation and consistent splitting method for Navier-Stokes equation. These schemes are linear, fully decoupled, unconditionally energy stable, only require solving a sequence of elliptic equations with constant coefficients at each time step, and provide a new technique to preserve the consistency between original energy and modified energy. We derive that numerical solutions of these schemes are uniformly bounded without any restriction on time step size. Furthermore, we carry out a rigorous error analysis for the first-order scheme and establish optimal global error estimates for the phase function, velocity and pressure in two and three-dimensional cases. Numerical examples are presented to validate the proposed schemes.
\end{abstract}

 \begin{keywords}
Cahn-Hilliard-Navier-Stokes; fully decoupled; high-order; energy stability; global error estimates \end{keywords}
 
    \begin{AMS}
35G25, 65M12, 65M15, 65Z05
    \end{AMS}

\pagestyle{myheadings}
\thispagestyle{plain}
\markboth{XIAOLI LI, NAN ZHENG, JIE SHEN AND ZHENGGUANG LIU}{FULLY DECOUPLED CS-GSAV SCHEMES}
 \section{Introduction} 
 
In recent years the phase field method has been widely used and become one of the main tools to approximate a variety of interfacial dynamics. The essential idea for the phase field method is that the interface is represented as a thin transition layer between two phases \cite{van1979thermodynamic,chen2016efficient}. The major advantage of the phase field methods is that one can easily derive the governing system from an energy-based variational approach. Hence the existence and uniqueness of solutions (at least local in time) for the complex coupled system can be derived by using the consistent energy dissipation law.

As for the numerical algorithm for Cahn-Hilliard-Navier-Stokes phase-field models, it is crucial that numerical schemes preserve a dissipative  energy law  at the discrete level. Various energy stable numerical methods have been proposed  in the last few decades for Navier-Stokes equations and for Cahn-Hilliard equations.
The main issue in dealing with the Navier-Stokes equation is the coupling of velocity and pressure by the incompressible condition $\nabla\cdot \textbf{u}=0$. 
A partial list of earlier works includes those three categories \cite{GMS06}: the pressure-correction method \cite{shen1992error,weinan1995projection,guermond2004error,li2022new}, the velocity-correction method \cite{guermond2003velocity,serson2016velocity} and the consistent splitting method \cite{guermond2003new,Johnston2004Accurate,shen2007error} (see also the gauge method \cite{E2003Gauge,Nochetto2003Gauge}).  Among these, the consistent splitting scheme has great advantages in  two aspects: (i) this method can achieve full accuracy of the time discretization since it is not limited by splitting error;  (ii)  The inf-sup condition between the velocity and the pressure approximation spaces is no longer enforced from a computational point of view. A main difficulty in solving the Cahn-Hilliard equation is how to deal the nonlinear term efficiently so that the resulting system can be effectively solved while preserving energy dissipation law. There are several popular approaches including the convex splitting  method \cite{eyre1998unconditionally}, stabilized semi-implicit method \cite{shen2010numerical}, invariant energy quadratization (IEQ) \cite{yang2016linear}, and scalar auxiliary  variable (SAV) \cite{shen2019new}.  For an up-to-date review on various classical methods for gradient flows especially for the Cahn-Hilliard equation, one can refer to \cite{Du.F19, MR4132124}.

During the past several decades there have been many studies devoted to developing efficient numerical schemes and carrying out corresponding error analysis for the Cahn-Hilliard-Navier-Stokes phase-field models, which are highly coupled nonlinear systems that rely on delicate cancellations of various nonlinear interactions to establish their dissipation law.
Fully coupled first-order-in-time implicit semi-discrete and fully discrete finite element schemes are considered by Feng, He and Liu \cite{feng2007analysis} and convergence results are established rigorously. Shen and Yang constructed weakly coupled \cite{shen2010phase} and fully decoupled \cite{shen2015decoupled} linear, first-order unconditionally energy stable schemes for two-phase incompressible flows with modified double-well potential. Gr\"un \cite{grun2013convergent}  established an abstract convergence result for a fully discrete implicit scheme for diffuse interface models of two-phase incompressible fluids with different densities.  A coupled second-order energy stable scheme for the  Cahn-Hilliard-Navier-Stokes system based on convex splitting for the Cahn-Hilliard equation is constructed by Han and Wang \cite{MR3324579}. In addition, Han et al. \cite{MR3608328} developed a class of second-order IEQ  schemes which can preserve energy stability. In 2020, we  \cite{Li2019SAV} constructed a second-order weakly-coupled, linear, energy stable SAV-MAC scheme  for the Cahn-Hilliard-Navier-Stokes equations, and established  second order convergence both in time and space for the simpler Cahn-Hilliard-Stokes equations.

 It is important to note that all the aforementioned works involve solving a coupled linear or nonlinear system with variable coefficients at each time step. Recently in \cite{li2022fully}, we construct first- and second-order time discretization schemes for the Cahn-Hilliard- Navier-Stokes system based on the MSAV approach for gradient systems and (rotational) pressure-correction for Navier-Stokes equations. These schemes are linear, fully decoupled, unconditionally energy stable, and only require solving a sequence of elliptic equations with constant coefficients at each time step. In the above work, we only established error estimates for two-dimensional case. However, obtaining results for the three-dimensional case is still crucial and highly challenging. Moreover it is difficult to construct high-order fully decoupled numerical schemes due to the splitting error of the pressure-correction method. 
   
The main purposes of this work are to construct a class of high-order fully decoupled, linear and unconditionally energy stable schemes and  to carry out a rigorous error analysis in  two and three-dimensional cases. By using a combination of techniques in the GSAV approach \cite{huang2022new} with a new energy-optimal  
relaxation \cite{liu2023novel} and consistent splitting method \cite{guermond2003new,li2023error}, and carefully handling the nonlinear decoupled terms, we are finally able to construct a several fully decoupled high-order implicit-explicit
schemes for the two-phase incompressible flows.  
Our main contributions are:
\begin{itemize}
\item We construct new fully decoupled, linear and  high-order schemes for the two-phase incompressible flows, which only requires solving a sequence of  Poisson type equations with constant coefficients at each time step. 
\item The constructed schemes are unconditionally energy stable and provide a new technique to preserve the consistency between original energy and modified energy, which can be proved to be an optimal energy approximation by proposing a novel technique to correct the modified energy of the GSAV approach.
\item Global in time error estimates in  $l^{\infty}(0,T;H^1(\Omega)) \bigcap l^{2}(0,T;H^2(\Omega))$ for the velocity and \newline
 $l^{2}(0,T;H^1(\Omega))$ for the pressure, and  $ l^{\infty}(0,T;H^1(\Omega)) $ for the phase function are established in two and three-dimensional cases. 
\end{itemize}
We believe that our high-order scheme is the first fully decoupled, linear, unconditionally energy stable scheme for the two-phase incompressible flows, and our global error analysis  in the three-dimensional case is the first for any  linear and fully decoupled schemes with explicit treatment of all nonlinear  terms .  

The paper is organized as follows. In Section 2, we give the problem description and preliminaries. In Section 3, we construct the fully decoupled consistent splitting GSAV schemes 
and prove that they are unconditionally energy stable. 
In Section 4, we carry out error estimates for the first-order consistent splitting GSAV scheme for all functions.   In Section 5, we present numerical experiments to verify the accuracy of the theoretical results. 

\section{The Problem Description and Preliminaries} \label{sec:Notation}

We consider in this paper the construction and analysis of efficient time discretization  schemes for the following Cahn-Hilliard-Navier-Stokes system:
  \begin{subequations}\label{e_model}
    \begin{align}
    \frac{\partial \phi}{\partial t}+ (\textbf{u} \cdot \nabla ) \phi
    =M\Delta \mu  \quad &\ in\ \Omega\times J,
    \label{e_modelA}\\
    \mu=-\lambda\Delta \phi+   \lambda G^{\prime}(\phi) \quad &\ in\ \Omega\times J,
    \label{e_modelB}\\
     \frac{\partial \textbf{u}}{\partial t}+ \textbf{u}\cdot \nabla\textbf{u}
     -\nu\Delta\textbf{u}+\nabla p= \mu \nabla \phi
     \quad &\ in\ \Omega\times J,
      \label{e_modelC}\\
      \nabla\cdot\textbf{u}=0
      \quad &\ in\ \Omega\times J,
      \label{e_modelD}\\
       \frac{\partial \phi}{\partial \textbf{n}}= \frac{\partial \mu}{\partial \textbf{n}}=0,\
     \textbf{u}=\textbf{0}
      \quad &\ on\ \partial\Omega\times J,
      \label{e_modelE}
    \end{align}
  \end{subequations}
where $\displaystyle G(\phi)=\frac{1}{4\epsilon^2}(1-\phi^2)^2$ with $\epsilon$ representing the interfacial width, $M>0$ is the mobility constant, $\lambda>0$ is the mixing coefficient, $\nu>0$ is the fluid viscosity. $\Omega$ is a bounded domain in $\mathbb{R}^d$ and  $J=(0,T]$.  The unknowns are the velocity $\textbf{u}$, the pressure $p$, the phase function $\phi$ and the chemical potential $\mu$. Here we set $ \textbf{u} \cdot \nabla = u_1\frac{\partial }{\partial x}+u_2\frac{\partial }{\partial y}$. 
 We refer to \cite{hohenberg1977theory,gurtin1996two,MR1984386} for its physical interpretation and derivation as a phase-field model for the incompressible two phase flow with matching density (set to be $\rho_0=1$ for simplicity), and to \cite{MR2563636} for its mathematical analysis.
The above system satisfies the following energy dissipation law:
\begin{equation}\label{energy1}
 \frac{d E(\phi,\textbf{u})}{d t}= -M\|\nabla \mu\|^2-\nu\|\nabla\textbf{u}\|^2\;\;\text{with}\;\; E(\phi,\textbf{u})=\int_\Omega\{\frac 12|\textbf{u}|^2 +\frac \lambda 2 |\nabla \phi|^2+\lambda G(\phi) \}d\textbf{x}.
\end{equation}

We next introduce some standard notations. Let $L^m(\Omega)$ be the standard Banach space with norm
$$\| v\|_{L^m(\Omega)}=\left(\int_{\Omega}| v|^md\Omega\right)^{1/m}.$$
For simplicity, let
$$(f,g)=(f,g)_{L^2(\Omega)}=\int_{\Omega}fgd\Omega$$
denote the $L^2(\Omega)$ inner product.
For the case $p=\infty$, set $\|v\|_{\infty}=\|v\|_{L^{\infty}(\Omega)} = ess \ sup \{ 
|f(x)|:  x \in \Omega \}.$
  And $W^{k,p}(\Omega)$ be the standard Sobolev space
$$W^{k,p}(\Omega)=\{g:~\| g\|_{W_p^k(\Omega)}<\infty\},$$
where
\begin{equation}\label{enorm1}
\| g\|_{W^{k,p}(\Omega)}=\left(\sum\limits_{|\alpha|\leq k}\| D^\alpha g\|_{L^p(\Omega)}^p \right)^{1/p},
\end{equation}
in the case $1 \leq p < \infty$, and in the case $p=\infty$,
$$\| g\|_{W^{k,\infty}(\Omega)}= \max_{|\alpha|\leq k}\| D^\alpha g\|_{L^{\infty}(\Omega)}.$$
For simplicity, we set $H^k(\Omega)=W^{k,2}(\Omega)$ and $\| f \|_{k}= \| f \|_{H^k(\Omega)}$.

By using  Poincar\'e inequality, we have
\begin{equation}\label{e_norm H1}
\aligned
\|\textbf{v}\|\leq c_1\|\nabla\textbf{v}\|, \ \forall \  \textbf{v} \in \textbf{H}^1_0(\Omega),
\endaligned
\end{equation}
where $c_1$ is a positive constant depending only on $\Omega$ and
$$ \textbf{H}^1_0(\Omega) = \{ \textbf{v}\in \textbf{H}^1(\Omega):  \textbf{v}|_{\Gamma}=0 \} . $$ 
Define   
  $$\textbf{H}=\{ \textbf{v}\in \textbf{L}^2(\Omega): div\textbf{v}=0, \textbf{v}\cdot \textbf{n}|_{\Gamma}=0 \},\ \ \textbf{V}=\{\textbf{v}\in \textbf{H}^1_0(\Omega):  div\textbf{v}=0 \},$$
and the trilinear form $b(\cdot,\cdot,\cdot)$ by
\begin{equation*}
\aligned
b(\textbf{u},\textbf{v},\textbf{w})=\int_{\Omega}(\textbf{u}\cdot\nabla)\textbf{v}\cdot \textbf{w}d\textbf{x}.
\endaligned
\end{equation*}
We can easily observe that the trilinear form $b(\cdot,\cdot,\cdot)$ is skew-symmetric with respect to its last two arguments, i.e., 
\begin{equation}\label{e_skew-symmetric1}
\aligned
b(\textbf{u},\textbf{v},\textbf{w})=-b(\textbf{u},\textbf{w},\textbf{v}),\ \ \forall \ \textbf{u}\in \textbf{H}, \ \ \textbf{v}, \textbf{w}\in \textbf{H}^1_0(\Omega),
\endaligned
\end{equation}
and 
\begin{equation}\label{e_skew-symmetric2}
\aligned
b(\textbf{u},\textbf{v},\textbf{v})=0,\ \ \forall \ \textbf{u}\in \textbf{H}, \ \ \textbf{v}\in \textbf{H}^1_0(\Omega).
\endaligned
\end{equation}
By applying a combination of integration by parts,  Holder's inequality,  and Sobolev inequalities \cite{Tema95,shen1992error}, we have that for $d\le 4$,

\begin{flalign}\label{e_estimate for trilinear form}
b(\textbf{u},\textbf{v},\textbf{w})\leq \left\{
   \begin{array}{l}
   c_2\|\textbf{u}\|_1\|\textbf{v}\|_1\|\textbf{w}\|_1,\ \ \forall  \ \textbf{u}, \textbf{v} \in \textbf{H}
   , \textbf{w}\in \textbf{H}^1_0(\Omega),\\
   c_2\|\textbf{u}\|_2\|\textbf{v}\|\|\textbf{w}\|_1, \ \ \forall \ \textbf{u}\in \textbf{H}^2(\Omega)\cap\textbf{H},\ \textbf{v} \in \textbf{H}, \textbf{w}\in \textbf{H}^1_0(\Omega),\\
   c_2\|\textbf{u}\|_2\|\textbf{v}\|_1\|\textbf{w}\|, \ \ \forall \ \textbf{u}\in \textbf{H}^2(\Omega)\cap\textbf{H},\ \textbf{v} \in \textbf{H}, \textbf{w}\in \textbf{H}^1_0(\Omega),\\
   c_2\|\textbf{u}\|_1\|\textbf{v}\|_2\|\textbf{w}\|, \ \ \forall \ \textbf{v}\in \textbf{H}^2(\Omega)\cap\textbf{H},\ \textbf{u}\in \textbf{H}, \textbf{w}\in \textbf{H}^1_0(\Omega),\\
   c_2\|\textbf{u}\|\|\textbf{v}\|_2\|\textbf{w}\|_1, \ \ \forall \ \textbf{v}\in \textbf{H}^2(\Omega)\cap\textbf{H},\ \textbf{u} \in \textbf{H}, \textbf{w} \in \textbf{H}^1_0(\Omega).
   \end{array}
   \right.
\end{flalign}
In addition, we have the following more precise inequalities \cite{temam1983nonlinear,huang2021stability}:
\begin{flalign}\label{e_estimate for trilinear form2}
b(\textbf{u},\textbf{v},\textbf{w})\leq \left\{
   \begin{array}{l}
    c_2\|\textbf{u}\|_1^{1/2}\|\textbf{u}\|^{1/2}\|\textbf{v}\|_1^{1/2}\|\textbf{v}\|_2^{1/2}\|\textbf{w}\|, \ \ \  d\le 2, \\
   c_2\|\textbf{u}\|_1 \| \textbf{v}\|^{1/2}_{1} \| \textbf{v}\|^{1/2}_{2} \|\textbf{w}\|, \ \ \  d\le 3,
   \end{array}
   \right.
\end{flalign}
where $c_2$ is a positive constant depending only on $\Omega$.

 The following lemmas will be frequently used in the sequel, one can refer the proof and more detailed information in  
\cite{brenner2008mathematical,an2021optimal}:

 \begin{lemma}(Holder inequality) \label{lem: Holder inequality}
 Suppose that $\textbf{u} \in \textbf{L}^p(\Omega)$, $\textbf{v}\in \textbf{L}^q(\Omega)$, $\textbf{w}\in \textbf{L}^s(\Omega)$, $\frac{1}{p}+\frac{1}{q}+\frac{1}{s}=1$, then we have
 \begin{equation}\label{e_Preliminaries1}
\aligned
\int_{\Omega} | \textbf{u} \textbf{v} \textbf{w}| d \textbf{x} \leq \| \textbf{u} \|_{\textbf{L}^p} \| \textbf{v} \|_{\textbf{L}^q} \| \textbf{w} \|_{\textbf{L}^s}.
\endaligned
\end{equation}
\end{lemma}
 
  \begin{lemma}(Interpolation inequalities)  \label{lem: Interpolation inequalities}
 For $k=3,4,6$, we have
 \begin{equation}\label{e_Preliminaries2}
\aligned
\| \textbf{f} \|_{\textbf{L}^k} \leq C \| \textbf{f} \|_{\textbf{L}^2}^{ \frac{6-k}{ 2k } } \| \textbf{f} \|_{\textbf{H}^1}^{ \frac{3k-6}{ 2k } },
\endaligned
\end{equation}
 \begin{equation}\label{e_Preliminaries3}
\aligned
\| \textbf{f} \|_{\textbf{L}^{\infty}} \leq C \| \textbf{f} \|_{\textbf{H}^1}^{ \frac{1}{ 2 } } \| \textbf{f} \|_{\textbf{H}^2}^{ \frac{1}{ 2 } }.
\endaligned
\end{equation}
\end{lemma}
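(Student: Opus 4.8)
The plan is to recognize \eqref{e_Preliminaries2}--\eqref{e_Preliminaries3} as classical Gagliardo--Nirenberg and Agmon interpolation inequalities on a bounded (Lipschitz) domain $\Omega\subset\mathbb{R}^d$ with $d\le 3$ ($d=3$ being the critical case relevant here); they may be quoted directly from \cite{brenner2008mathematical,an2021optimal}, but a short self-contained argument goes as follows, the estimates being applied componentwise when $\textbf{f}$ is vector-valued.

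For \eqref{e_Preliminaries2}, I would first invoke the Sobolev embedding $H^1(\Omega)\hookrightarrow L^6(\Omega)$, valid for $d\le 3$, in the form $\|\textbf{f}\|_{\textbf{L}^6}\le C\|\textbf{f}\|_{\textbf{H}^1}$. This is precisely the case $k=6$, for which $\frac{6-k}{2k}=0$ and $\frac{3k-6}{2k}=1$. For $2\le k\le 6$ write $\frac1k=\frac{\theta}{2}+\frac{1-\theta}{6}$; solving gives $\theta=\frac{6-k}{2k}$ and $1-\theta=\frac{3k-6}{2k}$, in particular $\theta=\frac12$ for $k=3$ and $\theta=\frac14$ for $k=4$. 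The interpolation inequality for Lebesgue norms (a direct consequence of Hölder's inequality, Lemma~\ref{lem: Holder inequality}) then gives $\|\textbf{f}\|_{\textbf{L}^k}\le\|\textbf{f}\|_{\textbf{L}^2}^{\theta}\|\textbf{f}\|_{\textbf{L}^6}^{1-\theta}$, and combining with the embedding yields \eqref{e_Preliminaries2}.

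For \eqref{e_Preliminaries3}, which is Agmon's inequality in $d=3$, I would extend $\textbf{f}$ to a function on $\mathbb{R}^3$ (still denoted $\textbf{f}$) with $\|\textbf{f}\|_{H^j(\mathbb{R}^3)}\le C\|\textbf{f}\|_{H^j(\Omega)}$ for $j=1,2$ via a bounded Sobolev extension operator. On $\mathbb{R}^3$ one has $\|\textbf{f}\|_{L^\infty}\le C\|\widehat{\textbf{f}}\|_{L^1}$, and splitting the frequency integral at $|\xi|=R$ and applying Cauchy--Schwarz on each piece gives
\begin{equation*}
\|\widehat{\textbf{f}}\|_{L^1}\le C\Big(R^{1/2}\|\textbf{f}\|_{H^1}+R^{-1/2}\|\textbf{f}\|_{H^2}\Big),
\end{equation*}
where one uses $\int_{|\xi|\le R}(1+|\xi|^2)^{-1}\,d\xi\le CR$ and $\int_{|\xi|> R}(1+|\xi|^2)^{-2}\,d\xi\le CR^{-1}$ in three dimensions. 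Choosing $R=\|\textbf{f}\|_{H^2}/\|\textbf{f}\|_{H^1}$ to balance the two terms produces $\|\textbf{f}\|_{L^\infty}\le C\|\textbf{f}\|_{H^1}^{1/2}\|\textbf{f}\|_{H^2}^{1/2}$, and restricting back to $\Omega$ together with the extension bounds gives \eqref{e_Preliminaries3}.

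The only genuinely delicate point is the passage from $\mathbb{R}^d$ to the bounded domain $\Omega$: one needs a bounded extension operator $H^j(\Omega)\to H^j(\mathbb{R}^d)$, which is where the (standard) assumption that $\partial\Omega$ is at least Lipschitz enters; everything else is elementary (Hölder, Cauchy--Schwarz, and a one-line optimization in $R$). If one prefers to avoid extension operators altogether, one can work on a periodic box, or simply cite the Gagliardo--Nirenberg inequality in the form stated in \cite{brenner2008mathematical,an2021optimal}.
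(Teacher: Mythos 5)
Your proposal is correct. Note that the paper does not actually prove this lemma: it simply cites \cite{brenner2008mathematical,an2021optimal} for the proof, so there is no in-paper argument to compare against. Your derivation is the standard one and is sound: the exponent bookkeeping $\theta=\frac{6-k}{2k}$ from $\frac1k=\frac{\theta}{2}+\frac{1-\theta}{6}$ checks out for $k=3,4,6$, and the Fourier-splitting proof of Agmon's inequality with the choice $R=\|\textbf{f}\|_{H^2}/\|\textbf{f}\|_{H^1}$ balances the two terms exactly as claimed (and the bounds $\int_{|\xi|\le R}(1+|\xi|^2)^{-1}\,d\xi\le CR$ and $\int_{|\xi|>R}(1+|\xi|^2)^{-2}\,d\xi\le CR^{-1}$ hold for all $R>0$ in three dimensions). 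One small point worth making explicit: your frequency computation is specific to $d=3$; for $d=2$ the low-frequency integral is only logarithmic in $R$, but the stated inequality still holds there because the stronger two-dimensional Agmon bound $\|\textbf{f}\|_{L^\infty}\le C\|\textbf{f}\|_{L^2}^{1/2}\|\textbf{f}\|_{H^2}^{1/2}$ implies it a fortiori, so you should either say this or restrict the Fourier argument to the critical case $d=3$ as you partly do.
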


We will frequently use the following discrete version of the Gr\"onwall lemma \cite{shen1990long,HeSu07}:

\medskip
\begin{lemma} \label{lem: gronwall1}
Let $a_k$, $b_k$, $c_k$, $d_k$, $\gamma_k$, $\Delta t_k$ be nonnegative real numbers such that
\begin{equation}\label{e_Gronwall3}
\aligned
a_{k+1}-a_k+b_{k+1}\Delta t_{k+1}+c_{k+1}\Delta t_{k+1}-c_k\Delta t_k\leq a_kd_k\Delta t_k+\gamma_{k+1}\Delta t_{k+1}
\endaligned
\end{equation}
for all $0\leq k\leq m$. Then
 \begin{equation}\label{e_Gronwall4}
\aligned
a_{m+1}+\sum_{k=0}^{m+1}b_k\Delta t_k \leq \exp \left(\sum_{k=0}^md_k\Delta t_k \right)\{a_0+(b_0+c_0)\Delta t_0+\sum_{k=1}^{m+1}\gamma_k\Delta t_k \}.
\endaligned
\end{equation}
\end{lemma}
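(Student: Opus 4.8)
The plan is to reduce \eqref{e_Gronwall4} to the classical discrete Gr\"onwall inequality through a single summation followed by a telescoping/majorization argument, so that nothing beyond elementary manipulations and the bound $1+x\le e^x$ is needed.

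First I would fix $n$ with $0\le n\le m$ and sum the hypothesis \eqref{e_Gronwall3} over $k=0,1,\dots,n$. The differences $a_{k+1}-a_k$ telescope to $a_{n+1}-a_0$; the terms $c_{k+1}\Delta t_{k+1}-c_k\Delta t_k$ telescope to $c_{n+1}\Delta t_{n+1}-c_0\Delta t_0$; and the shifted sums become $\sum_{k=1}^{n+1}b_k\Delta t_k$ and $\sum_{k=1}^{n+1}\gamma_k\Delta t_k$. Discarding the nonnegative term $c_{n+1}\Delta t_{n+1}$, adding $b_0\Delta t_0$ to both sides, and using $\sum_{k=1}^{n+1}\gamma_k\Delta t_k\le\sum_{k=1}^{m+1}\gamma_k\Delta t_k$ (all quantities being nonnegative), I obtain, with the shorthand $G:=a_0+(b_0+c_0)\Delta t_0+\sum_{k=1}^{m+1}\gamma_k\Delta t_k$,
\[ a_{n+1}+\sum_{k=0}^{n+1}b_k\Delta t_k\ \le\ G+\sum_{k=0}^{n}a_k\,d_k\,\Delta t_k\qquad(0\le n\le m). \]
Since $\sum_{k=0}^{n+1}b_k\Delta t_k\ge0$ and $G\ge a_0$, this already gives $a_j\le E_j:=G+\sum_{k=0}^{j-1}a_k d_k\Delta t_k$ for every $0\le j\le m+1$.

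Next I would run the standard majorization on $\{E_j\}$: it is nondecreasing with $E_0=G$, and $E_{j+1}=E_j+a_j d_j\Delta t_j\le E_j(1+d_j\Delta t_j)\le E_j\exp(d_j\Delta t_j)$, so by induction $E_j\le G\exp\big(\sum_{k=0}^{j-1}d_k\Delta t_k\big)$ and hence $a_j\le G\exp\big(\sum_{k=0}^{j-1}d_k\Delta t_k\big)$ for all $0\le j\le m+1$. Feeding this back into the first displayed inequality above with $n=m$ gives
\[ a_{m+1}+\sum_{k=0}^{m+1}b_k\Delta t_k\ \le\ G+G\sum_{k=0}^{m}d_k\Delta t_k\exp\Big(\sum_{j=0}^{k-1}d_j\Delta t_j\Big). \]
To close the estimate without losing a factor, set $P_k:=\exp\big(\sum_{j=0}^{k-1}d_j\Delta t_j\big)$ (so $P_0=1$); then $1+x\le e^x$ gives $d_k\Delta t_k\,P_k\le P_{k+1}-P_k$, so the sum on the right telescopes to at most $P_{m+1}-P_0=\exp\big(\sum_{j=0}^{m}d_j\Delta t_j\big)-1$. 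The two $G$-terms then combine into $G\exp\big(\sum_{j=0}^{m}d_j\Delta t_j\big)$, which is precisely \eqref{e_Gronwall4}.

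This is a routine argument; the only places that need care are the bookkeeping of the telescoping sums---keeping track of the index shifts in the $b_k$ and $\gamma_k$ sums and discarding exactly the nonnegative terms $c_{n+1}\Delta t_{n+1}$---and the final telescoping bound for $\sum_k d_k\Delta t_k\,P_k$, since a crude estimate there would produce an unwanted extra multiple of $\sum_k d_k\Delta t_k$ instead of the clean exponential prefactor in \eqref{e_Gronwall4}.
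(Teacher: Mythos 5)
Your proof is correct and complete; the paper itself states this lemma without proof, citing the literature, and your argument (sum and telescope the hypothesis, majorize $a_j$ by the nondecreasing sequence $E_j$ via $1+x\le e^x$, then feed the bound back and telescope $\sum_k d_k\Delta t_k P_k$ to recover the clean exponential prefactor) is exactly the standard derivation found in the cited references. No gaps.
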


Throughout the paper we use $C$, with or without subscript, to denote a positive
constant, independent of discretization parameters, which could have different values at different places.

  \section{The CS-GSAV schemes}
 In this section, we first reformulate the Cahn-Hilliad-Navier-Stokes system into an equivalent 
  system with generalized scalar auxiliary variables (GSAV). Then,  
   we construct high-order fully decoupled semi-discrete consistent splitting GSAV schemes based on the IMEX BDF-$k$ formulae with $k=1,2,3,4,5$, and prove that they are unconditionally energy stable.
 \subsection{GSAV reformulation} 
 Let $\gamma>0$ be a  positive constant,  $F(\phi)=G(\phi)-\frac\gamma 2  \phi^2$ and  
\begin{equation}\label{energy_reformulation}
E(\phi,\textbf{u})=\int_\Omega\{\frac 12|\textbf{u}|^2 +\frac \lambda 2 |\nabla \phi|^2 + \frac { \lambda \gamma } {2}  \phi ^2 +\lambda F(\phi) \}d\textbf{x},
\end{equation} 
where the term  $\frac\gamma 2  \phi^2$ is introduced to simplify the analysis 
  (cf. \cite{shen2018convergence}).
We introduce the following generalized scalar auxiliary variable 
  \begin{subequations}\label{e_definition_SAV}
    \begin{align}
    &r(t) = E(\phi,\textbf{u}) + \kappa_0,
    \end{align}
  \end{subequations}
 where $\kappa_0$ is a positive constant to guarantee that $ \lambda \int_{\Omega} F(\phi) d\textbf{x} + 
 \kappa_0 >0 $. Next we reformulate the  system (\ref{e_model})  as:
 \begin{subequations}\label{e_model_r}
    \begin{align}
    \frac{\partial \phi}{\partial t} + (\textbf{u} \cdot \nabla ) \phi
    =M\Delta \mu  \quad &\ in\ \Omega\times J,
    \label{e_model_rA}\\
    \mu=- \lambda \Delta \phi + \lambda \gamma \phi + \lambda  F^{\prime}(\phi) \quad &\ in\ \Omega\times J,
    \label{e_model_rB}\\
    \frac{\partial \textbf{u}}{\partial t}+  \textbf{u}\cdot \nabla\textbf{u}-\nu\Delta\textbf{u}+\nabla p=   \mu \nabla \phi
     \quad &\ in\ \Omega\times J,
      \label{e_model_rC}\\
            \nabla\cdot\textbf{u}=0
      \quad &\ in\ \Omega\times J,
      \label{e_model_rD}\\
      \frac{\rm{d} r}{\rm{d} t}   = -M \|\nabla \mu\|^2 - \nu \|\nabla \textbf{u} \|^2 \quad &\ in\ \Omega\times J .
    \label{e_model_r_Modify}  
 \end{align}
  \end{subequations}
 It is easy to see that the above system is equivalent to the original system. We shall construct below efficient numerical schemes for the above system which are energy stable with respect to  \eqref{e_model_r_Modify}.

 Assuming $ \tilde{ \phi }^j $, $ \tilde{ \mu }^j $ and $\tilde{\textbf{u}}^{j}$ with $j=n,n-1,\ldots, n-l+1$ are given, we solve $ \tilde{ \phi }^{n+1} $, $ \tilde{ \mu }^{n+1} $ and $\tilde{\textbf{u}}^{n+1}$ from 
     \begin{eqnarray}
 &&  \frac{ \alpha_k \tilde{ \phi} ^{n+1} - A_k( \tilde{ \phi }^{n} )  } {  \Delta t } +   ( B_k( \textbf{u}^n ) \cdot \nabla ) B_k( \phi^n )
   = M \Delta \tilde{ \mu }^{n+1}, \label{e_model_semi1} \\
 &&  \tilde{ \mu }^{n+1} =- \lambda \Delta \tilde{ \phi }^{n+1}+ \lambda \gamma \tilde{ \phi }^{n+1} + \lambda  F^{\prime}( B_k( \phi^n) ),\label{e_model_semi2} \\   
 &&  \frac{ \alpha_k \tilde{\textbf{u}}^{n+1} - A_k( \tilde{\textbf{u}}^{n} )  } {  \Delta t }  - \nu \Delta \tilde{\textbf{u}}^{n+1} = 
B_k( \mu^n ) \nabla B_k( \phi^n ) - ( B_k( \textbf{u}^{n} ) \cdot \nabla ) B_k( \textbf{u}^{n} )  - \nabla B_k( p^{n} ), \label{e_model_semi3}  
 \end{eqnarray}
 where $\alpha_k$, the operators $A_k$ and $B_k$ are defined by

first-order scheme: 
\begin{equation*}\label{e_First-order_operator}
\aligned
\alpha_1=1, \ A_1( f^{n} ) =  f^{n} , 
\ B_1( g^{n}) =  g^{n} ;
\endaligned
\end{equation*}

second-order scheme: 
\begin{equation*}\label{e_Second-order_operator}
\aligned
\alpha_2=\frac{3}{2}, \ A_2( f^{n} ) = 2 f^{n} - \frac{1}{2} f^{n-1}, 
\ B_2( g^{n}) = 2 g^{n} -  g^{n-1};
\endaligned
\end{equation*} 

third-order scheme: 
\begin{equation*}\label{e_Third-order_operator}
\aligned
\alpha_3=\frac{11}{6}, \ A_3( f^{n} ) = 3 f^{n} - \frac{3}{2} f^{n-1} +  \frac{1}{3} f^{n-2},
\ B_3( g^{n} ) = 3 g^{n} -  3 g^{n-1} + g^{n-2} ;
\endaligned
\end{equation*} 

fourth-order scheme: 
\begin{equation*}\label{e_Fourth-order_operator}
\aligned
\alpha_4=\frac{25}{12}, \ & A_4( f^{n} ) = 4 f^{n} -3 f^{n-1} +  \frac{4}{3} f^{n-2} - \frac{1}{4} f^{n-3}, \\
& B_4( g^{n} ) = 4 g^{n} -  6 g^{n-1} + 4 g^{n-2}- g^{n-3} ;
\endaligned
\end{equation*} 

fifth-order scheme: 
\begin{equation*}\label{e_Fifth-order_operator}
\aligned
\alpha_5=\frac{137}{60}, \ & A_5( f^{n} ) = 5 f^{n} -5 f^{n-1} +  \frac{10}{3} f^{n-2} - \frac{5}{4} f^{n-3} + \frac{1}{5} f^{n-4}, \\
& B_5(g^{n} ) = 5 g^{n} -  10 g^{n-1} + 10 g^{n-2}- 5 g^{n-3} + g^{n-4}.
\endaligned
\end{equation*}  
 
 Then we solve $\tilde{R}^{n+1}$,  $\xi^{n+1}$  from
\begin{equation}\label{e_model_semi4}
\aligned
\frac{ \tilde{R}^{n+1} - R^n }{\Delta t} = - \xi^{n+1} \left(   M \| \nabla \tilde{ \mu}^{n+1} \|^2 + \nu \|\nabla B_k( \textbf{u}^n ) \|^2 \right), \ \ \xi^{n+1} = \frac{ \tilde{R}^{n+1} }{ E( \tilde{ \phi} ^{n+1} , \tilde{\textbf{u}}^{n+1} ) +\kappa_0 }.
\endaligned
\end{equation}  

Next we update $\phi^{n+1}$, $\mu^{n+1}$, $\textbf{u}^{n+1}$  and  $p^{n+1}$ by 
    \begin{eqnarray}
&& \phi^{n+1} = \eta_{k}^{n+1} \tilde{\phi} ^{n+1}, \ \ \mu^{n+1} = \eta_{k}^{n+1} \tilde{\mu} ^{n+1}, \ \ \textbf{u}^{n+1} =  \eta_{k}^{n+1} \tilde{ \textbf{u} } ^{n+1},    \label{e_model_semi5} \\
&&  (\nabla p^{n+1}, \nabla q) = \left(  \mu^{n+1} \nabla \phi^{n+1}   - ( \textbf{u}^{n+1} \cdot \nabla ) \textbf{u}^{n+1} - \nu \nabla \times \nabla \times \tilde{ \textbf{u} }^{n+1}, \nabla q) \right), \ \forall q\in H^1(\Omega), \label{e_model_semi6}
 \end{eqnarray}
where 
\begin{equation}\label{e_model_semi7}
\aligned
 \eta_{1}^{n+1} = 1- (1-\xi^{n+1} )^2, \  \eta_{k}^{n+1} = 1- (1-\xi^{n+1} )^k
\endaligned
\end{equation} 
with $k=2,3,4,5$. 

\textbf{Step II}: Update the scalar auxiliary variable $R^{n+1}$ as
\begin{equation}\label{e_model_semi8}
R^{n+1}=\min\left\{R^n, E( \phi ^{n+1} , \textbf{u}^{n+1} ) +\kappa_0  \right\}.
\end{equation}

 Next we prove the following unconditional energy stability for the above high-order schemes \eqref{e_model_semi1}-\eqref{e_model_semi8}: 
 \medskip

  \begin{theorem}\label{thm_energy stability_high order}
Given $R^n>0$, Then for \eqref{e_model_semi1}-\eqref{e_model_semi8}, we have  $\xi^{n+1}>0$ and 
 \begin{equation} \label{e_stability1}
\aligned
0<R^{n+1} \leq R^{n}, \ \ \forall n \leq T/\Delta t.
\endaligned
\end{equation} 
We further have the following original dissipative law:
\begin{equation} \label{e_stability_original}
\aligned
E( \phi ^{n+1} , \textbf{u}^{n+1} ) \leq E( \phi ^{n} , \textbf{u}^{n} ),
\endaligned
\end{equation}
under the condition of $ E( \phi ^{n+1} , \textbf{u}^{n+1} ) +\kappa_0   \leq R^n$. Here $ E( \phi ^{n+1} , \textbf{u}^{n+1} )  $ is the original energy. In addition, there exists a constant $M_T$ independent on $\Delta t$ such that

\begin{equation}\label{e_stability2}
\aligned
 \|  \textbf{u}^{n+1}  \|^2 + & \lambda \| \nabla  \phi ^{n+1} \|^2 +  \lambda \gamma \|  \phi ^{n+1} \|^2  
\leq M_T, \ \ \forall n \leq T/\Delta t.
\endaligned
\end{equation} 
\end{theorem}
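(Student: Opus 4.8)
The plan is to establish the three assertions in order, exploiting the GSAV structure. First, for the claim $\xi^{n+1}>0$ and $0<R^{n+1}\le R^n$: I would argue by induction on $n$. Given $R^n>0$, the denominator $E(\tilde\phi^{n+1},\tilde{\textbf{u}}^{n+1})+\kappa_0$ is strictly positive by the choice of $\kappa_0$ (which guarantees $\lambda\int_\Omega F(\phi)\,d\textbf{x}+\kappa_0>0$ and all other terms in $E$ are nonnegative), so $\xi^{n+1}=\tilde R^{n+1}/(E(\tilde\phi^{n+1},\tilde{\textbf{u}}^{n+1})+\kappa_0)$ is well-defined once $\tilde R^{n+1}$ is known. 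Solving the first relation in \eqref{e_model_semi4} for $\tilde R^{n+1}$ as a linear equation in the unknown (since $\xi^{n+1}$ is proportional to $\tilde R^{n+1}$, this is actually a linear equation in $\tilde R^{n+1}$), I get
\begin{equation*}
\tilde R^{n+1}=\frac{R^n}{1+\Delta t\,\dfrac{M\|\nabla\tilde\mu^{n+1}\|^2+\nu\|\nabla B_k(\textbf{u}^n)\|^2}{E(\tilde\phi^{n+1},\tilde{\textbf{u}}^{n+1})+\kappa_0}},
\end{equation*}
which is manifestly positive and satisfies $0<\tilde R^{n+1}\le R^n$ because the added term in the denominator is nonnegative. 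Hence $\xi^{n+1}>0$, and then $R^{n+1}=\min\{R^n,E(\phi^{n+1},\textbf{u}^{n+1})+\kappa_0\}>0$ (the second argument is positive by the same $\kappa_0$ argument), with $R^{n+1}\le R^n$ immediate from the $\min$. This closes the induction.

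Second, for the original dissipative law \eqref{e_stability_original} under the hypothesis $E(\phi^{n+1},\textbf{u}^{n+1})+\kappa_0\le R^n$: by definition \eqref{e_model_semi8}, that hypothesis forces $R^{n+1}=E(\phi^{n+1},\textbf{u}^{n+1})+\kappa_0$. On the other hand, from the first step $R^{n+1}\le R^n$, and by the inductive structure $R^n\le E(\phi^n,\textbf{u}^n)+\kappa_0$ whenever the analogous condition held at the previous step, or more robustly one uses $R^n=\min\{R^{n-1},E(\phi^n,\textbf{u}^n)+\kappa_0\}\le E(\phi^n,\textbf{u}^n)+\kappa_0$ directly from \eqref{e_model_semi8} applied at level $n$. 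Combining, $E(\phi^{n+1},\textbf{u}^{n+1})+\kappa_0=R^{n+1}\le R^n\le E(\phi^n,\textbf{u}^n)+\kappa_0$, and subtracting $\kappa_0$ gives \eqref{e_stability_original}.

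Third, for the uniform bound \eqref{e_stability2}: from the first step, $R^{n+1}\le R^n\le\cdots\le R^0$, so the sequence $R^n$ is uniformly bounded by $R^0$, which is a fixed constant independent of $\Delta t$. Now use \eqref{e_model_semi8}: either $R^{n+1}=E(\phi^{n+1},\textbf{u}^{n+1})+\kappa_0$, in which case $E(\phi^{n+1},\textbf{u}^{n+1})=R^{n+1}-\kappa_0\le R^0-\kappa_0$; or $R^{n+1}=R^n<E(\phi^{n+1},\textbf{u}^{n+1})+\kappa_0$. The latter branch needs a separate argument — here I would note that $\phi^{n+1}=\eta_k^{n+1}\tilde\phi^{n+1}$ and $\textbf{u}^{n+1}=\eta_k^{n+1}\tilde{\textbf{u}}^{n+1}$ with $\eta_k^{n+1}=1-(1-\xi^{n+1})^k$, and one checks that $|\eta_k^{n+1}|\le C$ uniformly (indeed $\eta_k^{n+1}\in[0,1]$ requires $\xi^{n+1}\in[0,2]$ for $k$ even and a slightly different interval for $k$ odd, but in any event a crude bound like $|1-\xi^{n+1}|\le 1+\xi^{n+1}$ together with a bound on $\xi^{n+1}$ suffices). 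So I need to control $\xi^{n+1}=\tilde R^{n+1}/(E(\tilde\phi^{n+1},\tilde{\textbf{u}}^{n+1})+\kappa_0)$. This is where the main obstacle lies: to bound \eqref{e_stability2} one ultimately needs that $E(\phi^{n+1},\textbf{u}^{n+1})$ is controlled, which feeds back through $\eta_k^{n+1}$ into itself. I expect the clean route is to show $\eta_k^{n+1}$ is uniformly bounded by a constant depending only on $k$ — using that $0<\xi^{n+1}\le \tilde R^{n+1}/\kappa_0' $ for a suitable lower bound $\kappa_0'$ on the denominator (again from the $\kappa_0$ choice) combined with $\tilde R^{n+1}\le R^0$ — giving $\xi^{n+1}\le R^0/\kappa_0'=:C_1$ and hence $|\eta_k^{n+1}|\le (1+C_1)^k$. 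Then $E(\phi^{n+1},\textbf{u}^{n+1})\le (\eta_k^{n+1})^2\bigl(\tfrac12\|\tilde{\textbf{u}}^{n+1}\|^2+\tfrac\lambda2\|\nabla\tilde\phi^{n+1}\|^2+\tfrac{\lambda\gamma}2\|\tilde\phi^{n+1}\|^2\bigr)+\lambda\int_\Omega F(\phi^{n+1})\,d\textbf{x}$; handling the $F$ term requires its lower bound from the $\kappa_0$ assumption, so that $\tfrac12\|\textbf{u}^{n+1}\|^2+\tfrac\lambda2\|\nabla\phi^{n+1}\|^2+\tfrac{\lambda\gamma}2\|\phi^{n+1}\|^2\le E(\phi^{n+1},\textbf{u}^{n+1})+\kappa_0$, and the right side is either $R^{n+1}\le R^0$ (first branch) or, in the problematic second branch, must be re-expressed. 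I believe the resolution is that \eqref{e_model_semi8} always yields $R^{n+1}\le E(\phi^{n+1},\textbf{u}^{n+1})+\kappa_0$ fails in the second branch, so one instead bounds $E(\tilde\phi^{n+1},\tilde{\textbf{u}}^{n+1})+\kappa_0$ directly: since $\xi^{n+1}(E(\tilde\phi^{n+1},\tilde{\textbf{u}}^{n+1})+\kappa_0)=\tilde R^{n+1}\le R^0$ and $\xi^{n+1}$ is bounded below away from $0$ is \emph{not} guaranteed — so the genuinely careful step is to observe that if $\xi^{n+1}$ is small then $\eta_k^{n+1}$ is correspondingly small (of order $\xi^{n+1}$), which exactly compensates the potential largeness of $E(\tilde\phi^{n+1},\tilde{\textbf{u}}^{n+1})$; quantitatively $\eta_k^{n+1}\le k\,\xi^{n+1}$ for $\xi^{n+1}\in(0,1]$, giving $(\eta_k^{n+1})^2 E(\tilde\phi^{n+1},\tilde{\textbf{u}}^{n+1})\le k^2(\xi^{n+1})^2 E(\tilde\phi^{n+1},\tilde{\textbf{u}}^{n+1})\le k^2\xi^{n+1}\tilde R^{n+1}\le k^2 R^0$, and the $F$ lower bound then closes \eqref{e_stability2} with $M_T$ depending on $R^0$, $k$, $\kappa_0$ and $\lambda$ but not on $\Delta t$.
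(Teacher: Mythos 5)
Your proposal is correct and follows essentially the same route as the paper: the explicit formula for $\tilde R^{n+1}$ gives positivity and monotonicity, the min-definition of $R^n$ gives the conditional dissipation law, and the uniform bound rests on the same key mechanism the paper uses, namely that $\eta_k^{n+1}=\xi^{n+1}\cdot(\text{polynomial in }\xi^{n+1})$ so the factor of $\xi^{n+1}$ compensates a possibly large $E(\tilde\phi^{n+1},\tilde{\textbf u}^{n+1})$ via $\xi^{n+1}\bigl(E(\tilde\phi^{n+1},\tilde{\textbf u}^{n+1})+\kappa_0\bigr)=\tilde R^{n+1}\le R^0$. The only cosmetic point is that your linear bound $\eta_k^{n+1}\le k\xi^{n+1}$ is stated for $\xi^{n+1}\in(0,1]$, but since you have already shown $\xi^{n+1}\le C_1$, the identity $\eta_k^{n+1}=\xi^{n+1}\sum_{j=0}^{k-1}(1-\xi^{n+1})^j$ gives $|\eta_k^{n+1}|\le C(k,C_1)\,\xi^{n+1}$ in all cases and the argument closes exactly as you describe.
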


\begin{proof}
Given $R^n>0$, it follows from \eqref{e_model_semi8} that \eqref{e_stability1}
 holds.
 
In addition, noting that $ \tilde{R}^0=R^0$.  It follows from \eqref{e_model_semi4} that 
 \begin{equation}\label{e_High-order_Stability1}
\aligned
0 < \tilde{R} ^{n+1} =\frac{1 }{ 1+ \Delta t \frac{ M \| \nabla \tilde{ \mu} ^{n+1}  \|^2 + \nu \|\nabla B_k(\textbf{u} ^{n} )  \|^2 }{ 
E( \tilde{ \phi}^{n+1} , \tilde{\textbf{u}}^{n+1} ) +\kappa_0 } }  R^n <  R^n,
\endaligned
\end{equation} 
then we have $\xi^{n+1}>0$. Recalling \eqref{e_model_semi8}, we have
\begin{equation}\label{e_High-order_Stability_Add1}
\aligned
R^{n} \leq E( \phi ^{n} , \textbf{u}^{n} ) +\kappa_0,
\endaligned
\end{equation} 
which leads to the desired result \eqref{e_stability_original} under the condition of $ E( \phi ^{n+1} , \textbf{u}^{n+1} ) +\kappa_0   \leq R^n$.

Denote $R^0 := M$, it then follows from \eqref{energy_reformulation} and \eqref{e_High-order_Stability1}  that 
\begin{equation}\label{e_High-order_Stability4}
\aligned
 \xi^{n+1} = \frac{ \tilde{R} ^{n+1} }{ E( \tilde{ \phi} ^{n+1} , \tilde{\textbf{u}}^{n+1} ) +\kappa_0 } \leq \frac{ 2 M }{ \| \tilde{\textbf{u}}^{n+1} \|^2 +  \lambda \| \nabla \tilde{ \phi} ^{n+1} \|^2 + \lambda \gamma \| \tilde{ \phi} ^{n+1} \|^2 +  2 },
\endaligned
\end{equation} 
where without loss of generality, we assume the positive constant $ \int_{\Omega} F(\phi) d \textbf{x} + \kappa_0>1$. Recalling \eqref{e_model_semi7}, we can derive from \eqref{e_High-order_Stability4} that there exists a positive constant $M_1$ such that
\begin{equation}\label{e_High-order_Stability5}
\aligned
| \eta_l^{n+1} | = | \xi^{n+1} P_{q}( \xi^{n+1} ) | \leq \frac{ M_1 }{  \| \tilde{\textbf{u}}^{n+1} \|^2 +  \lambda \| \nabla \tilde{ \phi} ^{n+1} \|^2 + \lambda \gamma \| \tilde{ \phi} ^{n+1} \|^2 +  2 } ,
\endaligned
\end{equation} 
where $ P_{q} $ is a polynomial function of degree $q$ with $q=1$ for $l=1$ and $q=l-1$ for $l=2,3,4,5$. Thus we have 
\begin{equation}\label{e_High-order_Stability6}
\aligned
& \|  \textbf{u}^{n+1}  \|^2 +  \lambda \| \nabla  \phi ^{n+1} \|^2 + \lambda \gamma \|  \phi ^{n+1} \|^2   = (  \eta_l^{n+1} )^2 ( \| \tilde{\textbf{u}}^{n+1} \|^2 +  \lambda \| \nabla \tilde{ \phi} ^{n+1} \|^2 + \lambda \gamma \| \tilde{ \phi} ^{n+1} \|^2 ) \\
\leq & \left( \frac{ M_1 }{ \| \tilde{\textbf{u}}^{n+1} \|^2 +  \lambda \| \nabla \tilde{ \phi} ^{n+1} \|^2 + \lambda \gamma \| \tilde{ \phi} ^{n+1} \|^2  + 2 }  \right) ^2  ( \| \tilde{\textbf{u}}^{n+1} \|^2 +  \lambda \| \nabla \tilde{ \phi} ^{n+1} \|^2 + \lambda \gamma \| \tilde{ \phi} ^{n+1} \|^2 ) \\
\leq & M_1^2,
\endaligned
\end{equation} 
which implies the desired results \eqref{e_stability2}.

\end{proof}

Inspired by the numerical analysis technique for the SAV $\tilde{R}^{k+1}$ in \cite{zhang2022generalized}, we shall first establish the relation between $  \tilde{R}^{k+1}  $ and $ R^{k+1} $.

\medskip
\begin{lemma}\label{lem: relation_R and tildeR}
Using the definition \eqref{e_model_semi8}, we have
\begin{equation}\label{e_lem_relation_final}
\aligned
R^{k+1} = \sigma^{k+1}  \tilde{R}^{k+1} + ( 1- \sigma^{k+1} ) ( E( \phi ^{k+1} , \textbf{u}^{k+1} ) +\kappa_0 ),
\endaligned
\end{equation}  
where 
\begin{flalign*}
\renewcommand{\arraystretch}{1.5}
  \left\{
   \begin{array}{l}
   \sigma^{k+1} = 0, \  \ {\rm if \ } R^k \geq E( \phi ^{k+1} , \textbf{u}^{k+1} ) +\kappa_0, \\
   \sigma^{k+1} = 1 - \frac{  \tilde{R}^{k+1} \left(   M \| \nabla \tilde{ \mu}^{k+1} \|^2 + \nu \|\nabla  B_l ( \textbf{u}^k ) \|^2 \right) } {  (E( \tilde{ \phi} ^{k+1} , \tilde{\textbf{u}}^{k+1} ) +\kappa_0 ) ( E( \phi ^{k+1} , \textbf{u}^{k+1} ) +\kappa_0 -  \tilde{R}^{k+1} )  }  \Delta t , \ {\rm if \ }  R^k < E( \phi ^{k+1} , \textbf{u}^{k+1} ) +\kappa_0.
\end{array}\right.
\end{flalign*}
\end{lemma}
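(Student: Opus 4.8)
The plan is to verify the identity \eqref{e_lem_relation_final} directly in each of the two cases that occur in the definition \eqref{e_model_semi8} of $R^{k+1}$. The first case, $R^k \ge E(\phi^{k+1},\textbf{u}^{k+1})+\kappa_0$, is immediate: by \eqref{e_model_semi8} we then have $R^{k+1} = E(\phi^{k+1},\textbf{u}^{k+1})+\kappa_0$, while the right-hand side of \eqref{e_lem_relation_final} with $\sigma^{k+1}=0$ collapses to exactly $E(\phi^{k+1},\textbf{u}^{k+1})+\kappa_0$, so the two sides agree.

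The second case is $R^k < E(\phi^{k+1},\textbf{u}^{k+1})+\kappa_0$, where $R^{k+1} = R^k$. Here I would start from the first relation in \eqref{e_model_semi4} and rearrange it, using also the definition $\xi^{k+1} = \tilde{R}^{k+1}/(E(\tilde{\phi}^{k+1},\tilde{\textbf{u}}^{k+1})+\kappa_0)$, to express $R^k$ as an explicit affine function of $\tilde{R}^{k+1}$,
\[
R^k = \tilde{R}^{k+1} + \frac{\Delta t\,\tilde{R}^{k+1}\bigl(M\|\nabla\tilde{\mu}^{k+1}\|^2 + \nu\|\nabla B_l(\textbf{u}^k)\|^2\bigr)}{E(\tilde{\phi}^{k+1},\tilde{\textbf{u}}^{k+1})+\kappa_0}.
\]
Since \eqref{e_lem_relation_final} is equivalent to $R^k - (E(\phi^{k+1},\textbf{u}^{k+1})+\kappa_0) = \sigma^{k+1}\bigl(\tilde{R}^{k+1} - (E(\phi^{k+1},\textbf{u}^{k+1})+\kappa_0)\bigr)$, I would subtract $E(\phi^{k+1},\textbf{u}^{k+1})+\kappa_0$ from both sides of the displayed formula and divide by $\tilde{R}^{k+1} - (E(\phi^{k+1},\textbf{u}^{k+1})+\kappa_0)$; rearranging the resulting sign then yields precisely the second branch of the stated formula for $\sigma^{k+1}$.

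The only delicate point — and the main, albeit minor, obstacle — is to justify this division, i.e.\ that $\tilde{R}^{k+1} \ne E(\phi^{k+1},\textbf{u}^{k+1})+\kappa_0$ in the second case. This follows by combining the strict inequality $0 < \tilde{R}^{k+1} < R^k$ already established in \eqref{e_High-order_Stability1} during the proof of Theorem \ref{thm_energy stability_high order} with the case hypothesis $R^k < E(\phi^{k+1},\textbf{u}^{k+1})+\kappa_0$: together they give $0 < \tilde{R}^{k+1} < E(\phi^{k+1},\textbf{u}^{k+1})+\kappa_0$, so the denominator $E(\phi^{k+1},\textbf{u}^{k+1})+\kappa_0 - \tilde{R}^{k+1}$ is strictly positive and $\sigma^{k+1}$ is well defined. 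The same two inequalities, applied to the affine expression for $R^k$ above, in fact force $0 < \sigma^{k+1} < 1$ (nonnegativity of $M\|\nabla\tilde{\mu}^{k+1}\|^2 + \nu\|\nabla B_l(\textbf{u}^k)\|^2$ gives $\sigma^{k+1}\le 1$, and $R^k < E(\phi^{k+1},\textbf{u}^{k+1})+\kappa_0$ gives $\sigma^{k+1}>0$), so \eqref{e_lem_relation_final} genuinely expresses $R^{k+1}$ as a convex combination of $\tilde{R}^{k+1}$ and $E(\phi^{k+1},\textbf{u}^{k+1})+\kappa_0$, which is the form needed in the subsequent error analysis.
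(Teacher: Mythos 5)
Your proposal is correct and follows essentially the same route as the paper: both proofs dispose of the first case immediately from \eqref{e_model_semi8}, and in the second case both use the identity $R^k = \bigl(1+\Delta t\,(M\|\nabla\tilde\mu^{k+1}\|^2+\nu\|\nabla B_l(\textbf{u}^k)\|^2)/(E(\tilde\phi^{k+1},\tilde{\textbf{u}}^{k+1})+\kappa_0)\bigr)\tilde R^{k+1}$ from \eqref{e_High-order_Stability1} together with the ordering $\tilde R^{k+1}<R^k<E(\phi^{k+1},\textbf{u}^{k+1})+\kappa_0$ to solve the affine relation for $\sigma^{k+1}$. Your explicit justification that the denominator $E(\phi^{k+1},\textbf{u}^{k+1})+\kappa_0-\tilde R^{k+1}$ is strictly positive, and the verification that $0<\sigma^{k+1}<1$, make the argument slightly more complete than the paper's, which asserts these facts from the same ordering without spelling out the division.
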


\begin{proof}
 Recalling \eqref{e_model_semi8}, it is easy to obtain that if $ R^k \geq E( \phi ^{k+1} , \textbf{u}^{k+1} ) +\kappa_0 $, 
  \begin{equation}\label{e_lem_relation1}
R^{k+1}=\min\left\{R^k, E( \phi ^{k+1} , \textbf{u}^{k+1} ) +\kappa_0  \right\} = E( \phi ^{k+1} , \textbf{u}^{k+1} ) +\kappa_0.
\end{equation}
Thus we can easily obtain that $ \sigma^{k+1} = 0 $ in \eqref{e_lem_relation_final}.
In addition, we have $ R^{k+1} = R^{k } $ under the condition that $ R^k < E( \phi ^{k+1} , \textbf{u}^{k+1} ) +\kappa_0 $. Thus by using \eqref{e_High-order_Stability1}, we have
  \begin{equation}\label{e_lem_relation2}
\tilde{R}^{k+1} < R^k = R^{k+1} < E( \phi ^{k+1} , \textbf{u}^{k+1} ) +\kappa_0. 
\end{equation}
Thus there exists a constant $0 < \sigma^{k+1} <1$ to satisfy \eqref{e_lem_relation_final}.
 Next we shall prove that 
$$ \sigma^{k+1} = 1 - \frac{  \tilde{R}^{k+1} \left(   M \| \nabla \tilde{ \mu}^{k+1} \|^2 + \nu \|\nabla  \textbf{u}^k \|^2 \right) } {  (E( \tilde{ \phi} ^{k+1} , \tilde{\textbf{u}}^{k+1} ) +\kappa_0 ) ( E( \phi ^{k+1} , \textbf{u}^{k+1} ) +\kappa_0 -  \tilde{R}^{k+1} )  }  \Delta t .$$  
Using \eqref{e_High-order_Stability1} leads to 
 \begin{equation}\label{e_lem_relation3}
\aligned
R^{k+1} = & R^k = \left( 1+ \Delta t \frac{ M \| \nabla \tilde{ \mu} ^{k+1}  \|^2 + \nu \|\nabla B_l ( \textbf{u}^{k} )  \|^2 }{ 
E( \tilde{ \phi}^{k+1} , \tilde{\textbf{u}}^{k+1} ) +\kappa_0 }  \right) \tilde{R} ^{k+1} \\
= & \sigma^{k+1}  \tilde{R}^{k+1} + ( 1- \sigma^{k+1} ) ( E( \phi ^{k+1} , \textbf{u}^{k+1} ) +\kappa_0 ) ,
\endaligned
\end{equation} 
which implies that
$$ \sigma^{k+1} = 1 - \frac{  \tilde{R}^{k+1} \left(   M \| \nabla \tilde{ \mu}^{k+1} \|^2 + \nu \|\nabla  B_l ( \textbf{u}^k ) \|^2 \right) } {  (E( \tilde{ \phi} ^{k+1} , \tilde{\textbf{u}}^{k+1} ) +\kappa_0 ) ( E( \phi ^{k+1} , \textbf{u}^{k+1} ) +\kappa_0 -  \tilde{R}^{k+1} )  }  \Delta t .$$
 \end{proof}

 \section{Error estimates} 

 In this section, we carry out an error analysis for the first-order semi-discrete scheme \eqref{e_model_semi1}-\eqref{e_model_semi8}. 
 While in principle the error analysis for the high-order scheme \eqref{e_model_semi1}-\eqref{e_model_semi8} can be carried out  by combing the procedures below and those in \cite{huang2022new} for the analytical technique for the high-order scheme, but it will be much more involved and beyond the scope of this paper.

 For notational simplicity, we shall drop the dependence on $x$ for all functions when there is no confusion. Let $(\phi,\mu, \textbf{u}, p, r)$ be the exact solution of \eqref{e_model_r}, and $(\phi^{n+1},\mu^{n+1},\tilde{\textbf{u}}^{n+1}, \textbf{u}^{n+1},p^{n+1}, R^{n+1})$ be the solution of the scheme \eqref{e_model_semi1}-\eqref{e_model_semi8}, we
denote
   \begin{numcases}{}
\displaystyle \tilde{e}_{\phi}^{n+1} = \tilde{\phi}^{n+1} -  \phi(t^{n+1}),    \ \  \ e_{\phi}^{n+1}=\phi^{n+1}- \phi(t^{n+1}),  \notag \\
 \displaystyle \tilde{e}_{\mu}^{n+1} = \tilde{\mu}^{n+1} -  \mu(t^{n+1}), \ \ \ 
\displaystyle e_{\mu}^{n+1}=\mu^{n+1}-\mu(t^{n+1}), \notag \\  
\displaystyle \tilde{e}_{\textbf{u}}^{n+1}=\tilde{\textbf{u}}^{n+1}-\textbf{u}(t^{n+1}),\ \ \
\displaystyle e_{\textbf{u}}^{n+1}=\textbf{u}^{n+1}-\textbf{u}(t^{n+1}), \notag\\
\displaystyle \tilde{e}_{R}^{n+1}=\tilde{R}^{n+1}-r(t^{n+1}),  \ \ \
\displaystyle e_{R}^{n+1}=R^{n+1}-r(t^{n+1}). \notag \\
\displaystyle e_{p}^{n+1}=p^{n+1}-p(t^{n+1}).
\end{numcases}
 
The main results are stated in the following theorem:
\begin{theorem} \label{thm: error_estimate_final}
Assuming  $ \phi \in W^{2,\infty}(0,T; L^2(\Omega)) \bigcap W^{1,\infty}(0,T; H^1(\Omega)) \bigcap L^{ \infty}(0,T; H^2(\Omega))  $, $ \mu \in W^{1,\infty}(0,T; H^2(\Omega)) $, $\textbf{u}\in W^{1,\infty}(0,T;\textbf{H}^2(\Omega)) \bigcap W^{2,\infty}(0,T;\textbf{L}^{2}(\Omega)) $, and $p \in W^{1,\infty}(0,T;L^2(\Omega))$, 
then for the first-order  scheme \eqref{e_model_semi1}-\eqref{e_model_semi8}, we have
\begin{equation*}
\aligned
 \| e _{\phi}^{n+1} \|^2 + & \| \nabla e _{\phi}^{n+1} \|^2 +  \Delta t \sum\limits_{k=0}^{n} \| e _{\mu}^{k+1} \|^2 + \Delta t \sum\limits_{k=0}^{n} \| \nabla e_{\mu}^{k+1} \|^2  \\
& + \|  \nabla e_{\textbf{u}}^{n+1} \|^2  + \Delta t \sum\limits_{k=0}^{n} \| \Delta e_{\textbf{u}}^{k+1} \|^2  +  \Delta t \sum\limits_{k=0}^{n} \| \nabla e_p^{k+1} \|^2 
\leq  C (\Delta t)^2, \ \forall n\leq T/\Delta t ,
\endaligned
\end{equation*}  
where the positive constant $C$ is independent of $\Delta t$ and we modify
\begin{equation}\label{e_error_sigmak}
\aligned
\sigma^{k+1} = 1 - \frac{  \tilde{R}^{k+1} \left(   M \| \nabla \tilde{ \mu}^{k+1} \|^2 + \nu \|\nabla  \textbf{u}^k \|^2 \right) } {  (E( \tilde{ \phi} ^{k+1} , \tilde{\textbf{u}}^{k+1} ) +\kappa_0 ) ( E( \phi ^{k+1} , \textbf{u}^{k+1} ) +\kappa_0 -  \tilde{R}^{k+1} )  } (\Delta t )^2 
\endaligned
\end{equation} 
 under the condition that  $R^k < E( \phi ^{k+1} , \textbf{u}^{k+1} ) +\kappa_0$.

\end{theorem}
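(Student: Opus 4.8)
\section*{Proof proposal}

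The plan is to combine a bootstrapping (induction on $n$) argument for the uniform boundedness of the numerical solution in high-order norms with the classical energy method applied to the error equations, handling the Cahn--Hilliard block, the Navier--Stokes block, the pressure, and the scalar variable $\tilde R$ separately before coupling them through the discrete Gr\"onwall lemma (Lemma~\ref{lem: gronwall1}). Throughout, the a priori bounds $\|\textbf{u}^{n+1}\|^2+\lambda\|\nabla\phi^{n+1}\|^2+\lambda\gamma\|\phi^{n+1}\|^2\le M_T$ of Theorem~\ref{thm_energy stability_high order} are in force, and the additional high-order bounds needed to control the explicitly treated nonlinear terms ($\|\textbf{u}^{k}\|_2$, $\|\phi^{k}\|_2$, $\|\mu^{k}\|$, $\|\phi^k\|_{L^\infty}$, $\eta_1^k\ge 1/2$) are obtained inside the induction from the claimed estimate itself: if $\Delta t\sum_{k\le n}\|\Delta e_{\textbf{u}}^{k+1}\|^2\le C\Delta t^2$ and $\Delta t\sum_{k\le n}\|e_\mu^{k+1}\|^2\le C\Delta t^2$, then $\|\textbf{u}^{k}\|_2\le\|\textbf{u}(t^k)\|_2+C\sqrt{\Delta t}$ and $\|\mu^k\|\le\|\mu(t^k)\|+C\sqrt{\Delta t}$ are bounded, whence $\|\phi^k\|_2$ follows from elliptic regularity applied to \eqref{e_model_semi2} (using the $H^1$ bound on $\phi^{k-1}$ and the cubic growth of $F'$), and $\|\phi^k\|_{L^\infty}$ follows from \eqref{e_Preliminaries3}.

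First I would subtract the scheme \eqref{e_model_semi1}--\eqref{e_model_semi6} from the exact system \eqref{e_model_r} evaluated at $t^{n+1}$ (with $\partial_t$ replaced by the backward difference), producing error equations for $(\tilde e_\phi^{n+1},\tilde e_\mu^{n+1},\tilde e_{\textbf{u}}^{n+1},e_p^{n+1},\tilde e_R^{n+1})$ with local truncation errors that are $O(\Delta t)$ in $\textbf{L}^2$ under the assumed regularity. Because $\textbf{u}^{n+1}=\eta_1^{n+1}\tilde{\textbf{u}}^{n+1}$ (and likewise for $\phi,\mu$), I record the splitting $e_{\textbf{u}}^{n+1}=\eta_1^{n+1}\tilde e_{\textbf{u}}^{n+1}+(\eta_1^{n+1}-1)\textbf{u}(t^{n+1})$; since $\eta_1^{n+1}-1=-(1-\xi^{n+1})^2$, it suffices to show $|1-\xi^{n+1}|\le C\Delta t$ to make $|\eta_1^{n+1}-1|\le C\Delta t^2$, so that all relaxed errors inherit the tilde-error bounds up to an $O(\Delta t^2)$ term. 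The bound on $|1-\xi^{n+1}|$ follows by writing $1-\xi^{n+1}=\bigl(E(\tilde\phi^{n+1},\tilde{\textbf{u}}^{n+1})+\kappa_0-\tilde R^{n+1}\bigr)/\bigl(E(\tilde\phi^{n+1},\tilde{\textbf{u}}^{n+1})+\kappa_0\bigr)$, noting the denominator is bounded below by a positive constant and the numerator equals $\bigl(E(\tilde\phi^{n+1},\tilde{\textbf{u}}^{n+1})-E(\phi(t^{n+1}),\textbf{u}(t^{n+1}))\bigr)-\tilde e_R^{n+1}$, controlled by $\|\tilde e_\phi^{n+1}\|_1+\|\tilde e_{\textbf{u}}^{n+1}\|+|\tilde e_R^{n+1}|$; here $|\tilde e_R^{n+1}|=O(\Delta t)$ is extracted from \eqref{e_model_semi4} compared with \eqref{e_model_r_Modify}, the relation of Lemma~\ref{lem: relation_R and tildeR} with the modified $\sigma^{k+1}$ of \eqref{e_error_sigmak}, and the argument of \cite{huang2022new,zhang2022generalized}.

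For the Cahn--Hilliard block I would test the $\phi$-error equation with $\tilde e_\mu^{n+1}$ and the $\mu$-error equation with $(\tilde e_\phi^{n+1}-\tilde e_\phi^{n})/\Delta t$ and subtract, obtaining
\begin{equation*}
\frac{\lambda}{2\Delta t}\bigl(\|\nabla\tilde e_\phi^{n+1}\|^2-\|\nabla\tilde e_\phi^{n}\|^2\bigr)+\frac{\lambda\gamma}{2\Delta t}\bigl(\|\tilde e_\phi^{n+1}\|^2-\|\tilde e_\phi^{n}\|^2\bigr)+M\|\nabla\tilde e_\mu^{n+1}\|^2\;\le\;\bigl(\text{convection}+\text{nonlinear}+\text{truncation}\bigr),
\end{equation*}
where the nonlinear term $\lambda\bigl(F'(B_1(\phi^n))-F'(\phi(t^{n+1})),\cdot\bigr)$ is controlled by the $L^\infty$-boundedness of $\phi$ and the local Lipschitz continuity of $F'$, the convection error by the trilinear estimates \eqref{e_estimate for trilinear form}, and a mean-value argument (testing the $\phi$-error equation with $1$) together with Poincar\'e--Wirtinger upgrades $\|\nabla\tilde e_\mu^{n+1}\|$ to $\|\tilde e_\mu^{n+1}\|$. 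For the Navier--Stokes block I would test the error form of \eqref{e_model_semi3} first with $\tilde e_{\textbf{u}}^{n+1}$ (for $l^\infty(\textbf{L}^2)\cap l^2(\textbf{H}^1)$ control) and then with $-\Delta\tilde e_{\textbf{u}}^{n+1}$ (for $l^\infty(\textbf{H}^1)\cap l^2(\textbf{H}^2)$ control), splitting the explicit trilinear error $b(B_1(\textbf{u}^n),B_1(\textbf{u}^n),\cdot)-b(\textbf{u},\textbf{u},\cdot)(t^{n+1})$ into $b(\tilde e_{\textbf{u}}^{n},\textbf{u}(t^n),\cdot)+b(\textbf{u}(t^n),\tilde e_{\textbf{u}}^n,\cdot)+b(\tilde e_{\textbf{u}}^n,\tilde e_{\textbf{u}}^n,\cdot)+(\text{time error})$ and absorbing each piece into $\varepsilon\|\Delta\tilde e_{\textbf{u}}^{n+1}\|^2$ plus lower-order terms via \eqref{e_estimate for trilinear form}--\eqref{e_estimate for trilinear form2} in the $d\le 3$ case, while the coupling error $B_1(\mu^n)\nabla B_1(\phi^n)-\mu(t^{n+1})\nabla\phi(t^{n+1})$ is bounded by the Cahn--Hilliard errors. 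The explicit pressure term $\nabla B_1(p^n)-\nabla p(t^{n+1})$ is treated by adjoining the error form of the consistent-splitting Poisson equation \eqref{e_model_semi6} tested with $e_p^{n+1}$, which yields $\|\nabla e_p^{n+1}\|\le C\bigl(\|\Delta\tilde e_{\textbf{u}}^{n+1}\|+\|\tilde e_{\textbf{u}}^{n+1}\|_1+\|\tilde e_\mu^{n+1}\|+\|e_\phi^{n+1}\|_1+\Delta t\bigr)$ using $\|\nabla\times\nabla\times\tilde e_{\textbf{u}}^{n+1}\|\le C\|\Delta\tilde e_{\textbf{u}}^{n+1}\|$.

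Finally I would add the three estimates, absorb the $\varepsilon$-terms into their left-hand-side counterparts, cast the result in the form \eqref{e_Gronwall3} with $a_{k+1}=\|\nabla\tilde e_\phi^{k+1}\|^2+\lambda\gamma\|\tilde e_\phi^{k+1}\|^2+\|\nabla\tilde e_{\textbf{u}}^{k+1}\|^2$, $b_{k+1}$ collecting $M\|\nabla\tilde e_\mu^{k+1}\|^2+\|\tilde e_\mu^{k+1}\|^2+\nu\|\Delta\tilde e_{\textbf{u}}^{k+1}\|^2+\|\nabla e_p^{k+1}\|^2$ and $\gamma_{k+1}=O(\Delta t^2)$ (the coefficients $d_k$ being bounded thanks to the induction hypothesis), apply Lemma~\ref{lem: gronwall1}, and transfer the $O(\Delta t^2)$ bound from the tilde-errors to $(e_\phi,e_\mu,e_{\textbf{u}},e_p)$ through the $\eta_1^{n+1}$-splitting, which closes the induction at step $n+1$. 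The main obstacle is twofold: (i) closing the $l^\infty(\textbf{H}^1)\cap l^2(\textbf{H}^2)$ velocity estimate in three dimensions with \emph{all} nonlinear terms explicit --- this is exactly where the sharp trilinear inequality \eqref{e_estimate for trilinear form2} and the self-referential use, inside the induction, of the $l^2(\textbf{H}^2)$ bound (via $\|\textbf{u}^k\|_2\le\|\textbf{u}(t^k)\|_2+C\sqrt{\Delta t}$) are indispensable; and (ii) propagating $\tilde e_R^{n+1}$ and the relaxation factor $\xi^{n+1}$ at first-order accuracy through the $\min$-truncation \eqref{e_model_semi8} and the consistent-splitting pressure update without any restriction on $\Delta t$, for which the modified $\sigma^{k+1}$ in \eqref{e_error_sigmak} is the key device.
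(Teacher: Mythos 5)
Your overall architecture (induction on $n$ with an a priori smallness hypothesis, separate energy estimates for the Cahn--Hilliard block, the velocity, the pressure and $\tilde R$, then discrete Gr\"onwall) matches the paper's, and your treatment of $\xi^{n+1}$, $\eta_1^{n+1}$ and the modified $\sigma^{k+1}$ is in the right spirit. However, there is a genuine gap in the one place where the consistent-splitting analysis is actually delicate: the explicit pressure. You propose to close the pressure with $\|\nabla e_p^{n+1}\|\le C\bigl(\|\Delta\tilde e_{\textbf{u}}^{n+1}\|+\cdots\bigr)$ via the generic bound $\|\nabla\times\nabla\times\tilde e_{\textbf{u}}^{n+1}\|\le C\|\Delta\tilde e_{\textbf{u}}^{n+1}\|$. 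But $\nabla p^n$ enters the momentum error equation \emph{explicitly}, so when you test with $-2\Delta t\,\Delta\tilde e_{\textbf{u}}^{n+1}$ the term $2\Delta t\,\|\nabla e_p^{n}\|\,\|\Delta\tilde e_{\textbf{u}}^{n+1}\|$ produces, after Young's inequality, a lagged contribution $\Delta t\,\beta\,\|\Delta\tilde e_{\textbf{u}}^{n}\|^2$ whose coefficient $\beta$ is governed by that constant $C$. The sum over $k$ telescopes only if $\beta$ is \emph{strictly smaller} than the dissipation coefficient retained in front of $\Delta t\,\|\Delta\tilde e_{\textbf{u}}^{n+1}\|^2$ on the left; with a generic (domain-dependent) $C$ this inequality can fail and the Gr\"onwall argument does not close — note also that $\|\nabla e_p^{k+1}\|^2$ cannot simply be parked in the $b_{k+1}$ slot of Lemma~\ref{lem: gronwall1}, because it reappears at level $k+2$ multiplied by the new dissipation term. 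The paper resolves this by decomposing $\nabla e_p^{n}$ through the Leray--Helmholtz projection into a Stokes pressure $\nabla p_s(\tilde e_{\textbf{u}}^{n})=(\Delta\mathcal{P}-\mathcal{P}\Delta)\tilde e_{\textbf{u}}^{n}$ and invoking the Liu--Liu--Pego commutator estimate (Lemma~\ref{lem: commutator}), whose \emph{sharp} constant $\tfrac12+\epsilon$ permits the choice $\alpha\in(\tfrac12,1)$ and yields the telescoping structure $(1-\tfrac{3(1-\alpha)}{8})\nu$ versus $(\alpha+\tfrac{(1-\alpha)}{8})\nu$ in \eqref{e_velocity16}, with positive gap $\tfrac{(1-\alpha)}{2}\nu$. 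This lemma is not an optional refinement; it is the indispensable ingredient your proposal is missing.

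Two smaller points. First, in the Cahn--Hilliard block the pairing $\bigl(F'(\phi^{n})-F'(\phi(t^{n+1})),(\tilde e_\phi^{n+1}-\tilde e_\phi^{n})/\Delta t\bigr)$ cannot be bounded directly by Lipschitz continuity without losing a factor $\Delta t^{-1}$; the paper substitutes $(\tilde e_\phi^{n+1}-\tilde e_\phi^{n})/\Delta t=M\Delta\tilde e_\mu^{n+1}+(\text{convection})+R_\phi^{n+1}$ from \eqref{e_phi2} and integrates by parts (see \eqref{e_phi_error2}); you should make this step explicit. Second, your route to the uniform $H^2$ bound on $\tilde\phi^{k}$ (elliptic regularity on \eqref{e_model_semi2} using the bootstrapped bound on $\|\tilde\mu^k\|$) is workable but differs from the paper, which proves Lemma~\ref{lem_phi_H2_boundness} directly by testing with $\Delta^2\tilde\phi^{k+1}$ using only the unconditional stability \eqref{e_error1} and the induction hypothesis on $\|\textbf{u}^k\|_{H^1}$; the paper's version is cleaner because it does not feed the error estimate back into the boundedness lemma.
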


\medskip
\begin{rem} \label{rem_sigmak}
Since $R^k$  is an approximation of $E( \phi ^{k} , \textbf{u}^{k} ) +\kappa_0 $, we can easily obtain that $R^k \geq E( \phi ^{k+1} , \textbf{u}^{k+1} ) +\kappa_0$ in most cases by using the energy dissipation law. Thus we need a slight modification for $ \sigma^{k+1}  $ 
only in rare cases $R^k < E( \phi ^{k+1} , \textbf{u}^{k+1} ) +\kappa_0$ .
\end{rem}
 
 \medskip
 \begin{proof}
 The proof of the above theorem will be carried out through a sequence of intermediate lemmas. First we shall make the hypothesis that there exists a positive constant $C_0$  such that
\begin{equation}\label{e_error_estimate1}
\aligned
| 1- \xi^k | \leq C_0 \Delta t, \ \forall k \leq T/ \Delta t,
\endaligned
\end{equation}
\begin{equation}\label{e_error_estimate2}
\aligned
\| \tilde{e}_{\textbf{u}}^{k}  \|_{H^2} + \| \tilde{e}_{\mu}^{k}  \|_{H^1} \leq (\Delta t)^{1/6} , \ \forall k \leq T/ \Delta t,
\endaligned
\end{equation}
which will be proved in the induction process below by using a bootstrap argument.

We can easily obtain that \eqref{e_error_estimate1} and \eqref{e_error_estimate2} hold for $k=0$. Now we suppose 
\begin{equation}\label{e_error_estimate3}
\aligned
| 1- \xi^k | \leq C_0 \Delta t, \ \forall k \leq n,
\endaligned
\end{equation}
\begin{equation}\label{e_error_estimate4}
\aligned
\| \tilde{e}_{\textbf{u}}^{k}  \|_{H^2} + \| \tilde{e}_{\mu}^{k}  \|_{H^1}  \leq (\Delta t)^{1/6} , \ \forall k \leq n,
\endaligned
\end{equation}
and we shall prove that  $| 1- \xi^{n+1} | \leq C_0 \Delta t$ and $ \| \tilde{e}_{\textbf{u}}^{n+1}  \|_{H^2} + \| \tilde{e}_{\mu}^{n+1}  \|_{H^1}  \leq (\Delta t)^{1/6}  $ hold true.

\noindent{\bf Step 1:  Estimates for $H^2$ bounds of $\tilde{e}_{\textbf{m}}^{n+1}$.} 
First using exactly the same procedure in \cite{huang2021stability},
we can easily obtain that 
\begin{equation}\label{e_error5**}
\aligned
\frac{1}{2} \leq | \xi^k |, \  | \eta^k |  \leq 2, 
\endaligned
\end{equation} 
 under the condition $\Delta t \leq \min\{ \frac{1}{4 C_0 },1\}$. 
 
 We shall first derive an $H^2(\Omega)$ bound for $\phi^n$ without assuming the Lipschitz condition on $F(\phi)$. A key ingredient is the following  
 stability result   
  \begin{equation}\label{e_error1}
  \aligned
  \| \textbf{u}^{n+1} \|^2 +\| \phi^{n+1} \|_{H^1}^2+ |R^{n+1}|^2  \leq K_1,
 \endaligned
\end{equation} 
 where the positive constant $K_1$ is dependent on $\textbf{u}^{0}$ and $\phi^0$, which can be derived from the unconditionally energy stability \eqref{e_stability2}.

  \medskip
\begin{lemma} \label{lem_phi_H2_boundness}
There exists a positive constant $K_2$ independent of $\Delta t$  such  that 
$$ \| \Delta \tilde{\phi}^{k+1} \|^2+ \| \tilde{\mu}^{k+1} \|^2 
+  \sum\limits_{k=0}^{n} \Delta t \| \nabla \tilde{\mu}^{k+1} \|^2 \leq K_2,  \  \forall \ 0\leq k \leq n+1. $$  
\end{lemma}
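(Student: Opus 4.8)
The plan is to test the semi-discrete Cahn--Hilliard equations \eqref{e_model_semi1}--\eqref{e_model_semi2} with suitable multipliers and combine the resulting identities. First I would take the inner product of \eqref{e_model_semi1} (with $k=1$, so $\alpha_1=1$, $A_1(f^n)=f^n$, $B_1(g^n)=g^n$) with $\tilde\mu^{n+1}$, and then rewrite $\tilde\mu^{n+1}$ via \eqref{e_model_semi2} as $-\lambda\Delta\tilde\phi^{n+1}+\lambda\gamma\tilde\phi^{n+1}+\lambda F'(\phi^n)$, so that on the left I get
\[
\frac{1}{\Delta t}\bigl(\tilde\phi^{n+1}-\phi^n,\,-\lambda\Delta\tilde\phi^{n+1}+\lambda\gamma\tilde\phi^{n+1}+\lambda F'(\phi^n)\bigr)
\]
which, after the usual summation-by-parts and the algebraic identity $2(a-b,a)=|a|^2-|b|^2+|a-b|^2$ applied to the $\|\nabla\cdot\|^2$, $\|\cdot\|^2$ terms, produces a telescoping discrete energy $\frac{\lambda}{2}\|\nabla\tilde\phi^{n+1}\|^2+\frac{\lambda\gamma}{2}\|\tilde\phi^{n+1}\|^2+\lambda(F(\phi^n),1)+\cdots$ plus the dissipation $M\Delta t\|\nabla\tilde\mu^{n+1}\|^2$ on the right; the convection term $(( \mathbf u^n\cdot\nabla)\phi^n,\tilde\mu^{n+1})$ is controlled using \eqref{e_estimate for trilinear form}, the uniform $H^1$ bound on $\phi^n$ and the a priori bound \eqref{e_error1}, together with a small fraction of $M\Delta t\|\nabla\tilde\mu^{n+1}\|^2$. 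Here one must be slightly careful because $F'(\phi^n)$ is evaluated at $\phi^n$, not $\tilde\phi^{n+1}$, so the term $\lambda(F'(\phi^n),\tilde\phi^{n+1}-\phi^n)$ does not telescope exactly into $\lambda(F(\phi^{n+1}),1)-\lambda(F(\phi^n),1)$; I would absorb the mismatch using the quadratic-plus-smooth structure of $F$ (recall $F(\phi)=G(\phi)-\tfrac\gamma2\phi^2$) and the bound $|F''|\le C(1+|\phi|^2)$ with the $L^4$, $L^6$ interpolation inequalities of Lemma \ref{lem: Interpolation inequalities}, plus a discrete Gr\"onwall argument. This yields the bound on $\sum_k\Delta t\|\nabla\tilde\mu^{k+1}\|^2$ and on $\|\nabla\tilde\phi^{k+1}\|^2$, hence on $\|\tilde\mu^{k+1}\|$ via Poincar\'e once the mean of $\tilde\mu^{k+1}$ is controlled.

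Next, to upgrade to the $H^2$ (i.e. $\|\Delta\tilde\phi^{k+1}\|$) bound, I would test \eqref{e_model_semi2} with $-\Delta\tilde\phi^{n+1}$, giving
\[
\lambda\|\Delta\tilde\phi^{n+1}\|^2 + \lambda\gamma\|\nabla\tilde\phi^{n+1}\|^2
= -(\tilde\mu^{n+1},\Delta\tilde\phi^{n+1}) + \lambda(F'(\phi^n),\Delta\tilde\phi^{n+1}).
\]
The first term on the right is bounded by $\tfrac{\lambda}{4}\|\Delta\tilde\phi^{n+1}\|^2 + C\|\tilde\mu^{n+1}\|^2$, and the second is $\tfrac{\lambda}{4}\|\Delta\tilde\phi^{n+1}\|^2 + C\|F'(\phi^n)\|^2$ where $\|F'(\phi^n)\|^2\le C(1+\|\phi^n\|_{L^6}^6)\le C$ by the cubic growth of $F'$ and the uniform $H^1$ bound \eqref{e_error1} combined with Lemma \ref{lem: Interpolation inequalities}. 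Absorbing the two $\|\Delta\tilde\phi^{n+1}\|^2$ terms into the left completes the estimate, with $K_2$ depending only on $\mathbf u^0$, $\phi^0$, $T$ and the problem parameters.

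The main obstacle I anticipate is the treatment of the nonlinear potential term $\lambda(F'(\phi^n)-\text{telescoping part},\tilde\phi^{n+1}-\phi^n)$ in the first energy identity: because $F$ is not globally Lipschitz and the scheme treats $F'$ fully explicitly, one cannot simply quote a discrete energy law, and the error between $(F'(\phi^n),\tilde\phi^{n+1}-\phi^n)$ and $(F(\phi^{n+1})-F(\phi^n),1)$ must be bounded by $C\Delta t$ times controllable norms plus a small multiple of the dissipation — this is where the a priori $H^1$ bound \eqref{e_error1} and the discrete Gr\"onwall lemma (Lemma \ref{lem: gronwall1}) are essential, and where the constant $\gamma$ introduced in $F(\phi)=G(\phi)-\tfrac\gamma2\phi^2$ plays its role. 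Once this estimate is in hand, everything else is a routine combination of Young's inequality, the Poincar\'e inequality \eqref{e_norm H1}, and the interpolation inequalities.
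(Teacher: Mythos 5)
There is a genuine gap, and it sits exactly at the crux of the lemma. Your first energy estimate (testing \eqref{e_model_semi1} with $\tilde\mu^{n+1}$ and \eqref{e_model_semi2} with the discrete time derivative) only controls the \emph{time-integrated} dissipation $\Delta t\sum_k\|\nabla\tilde\mu^{k+1}\|^2$; pointwise in time this gives no better than $\|\nabla\tilde\mu^{n+1}\|=O((\Delta t)^{-1/2})$, so your claim that Poincar\'e then yields a uniform bound on $\|\tilde\mu^{k+1}\|$ is false (controlling the mean of $\tilde\mu^{k+1}$ does not help; the deviation from the mean needs a uniform $\|\nabla\tilde\mu^{k+1}\|$ bound that you do not have). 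Consequently your second step is circular: bounding $-(\tilde\mu^{n+1},\Delta\tilde\phi^{n+1})$ by $\tfrac{\lambda}{4}\|\Delta\tilde\phi^{n+1}\|^2+C\|\tilde\mu^{n+1}\|^2$ presupposes a uniform $L^2$ bound on $\tilde\mu^{n+1}$, but the only way to get that bound is from \eqref{e_model_semi2} \emph{after} $\|\Delta\tilde\phi^{n+1}\|$ is known to be bounded. Writing the term as $(\nabla\tilde\mu^{n+1},\nabla\tilde\phi^{n+1})$ does not rescue it either, for the same reason. This is not a technicality: for Cahn--Hilliard the $\ell^\infty(H^2)$ bound is a genuinely higher-order estimate and cannot be extracted from the basic energy law by elliptic regularity alone.

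The paper closes this by a different multiplier: substitute \eqref{e_model_semi2} into \eqref{e_model_semi1} and test with $\Delta^2\tilde\phi^{k+1}$. The discrete time-derivative term then telescopes directly as $\tfrac{1}{2\Delta t}(\|\Delta\tilde\phi^{k+1}\|^2-\|\Delta\tilde\phi^{k}\|^2+\|\Delta\tilde\phi^{k+1}-\Delta\tilde\phi^{k}\|^2)$, and the scheme supplies the strong dissipation $M\lambda\|\Delta^2\tilde\phi^{k+1}\|^2+M\lambda\gamma\|\nabla\Delta\tilde\phi^{k+1}\|^2$ against which both the explicitly treated nonlinearity $M\lambda(\Delta F'(\phi^k),\Delta^2\tilde\phi^{k+1})$ (handled as in \cite{shen2018convergence} using the $H^1$ bound \eqref{e_error1}, at the cost of a $\tfrac{M\lambda}{2}\|\Delta^2\tilde\phi^{k}\|^2$ term that still sums telescopically) and the convection term $(\textbf{u}^k\cdot\nabla\phi^k,\Delta^2\tilde\phi^{k+1})$ are absorbed; discrete Gr\"onwall then gives the uniform $\|\Delta\tilde\phi^{n+1}\|^2$ bound, and the $\|\tilde\mu^{k+1}\|$ and $\Delta t\sum_k\|\nabla\tilde\mu^{k+1}\|^2$ bounds follow afterwards from \eqref{e_model_semi2}. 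A secondary inaccuracy in your write-up: for the first-order scheme the discrete time difference is $\tilde\phi^{n+1}-\tilde\phi^{n}$ (the operator $A_1$ acts on the un-relaxed variable), not $\tilde\phi^{n+1}-\phi^{n}$, which matters for the telescoping.
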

\begin{proof}
Combining \eqref{e_model_semi1} with \eqref{e_model_semi2} and taking the inner product with $\Delta^2 \tilde{\phi}^{k+1}$ leads to
\begin{equation}\label{e_error2}
\aligned
\frac{1}{2 \Delta t}& ( \| \Delta  \tilde{\phi}^{k+1} \|^2-\| \Delta  \tilde{\phi}^{k} \|^2 +\| \Delta  \tilde{\phi}^{k+1}- \Delta  \tilde{\phi}^{k}\|^2 )+ M \lambda \| \Delta^2  \tilde{\phi}^{k+1} \|^2 +M \lambda \gamma \| \nabla \Delta  \tilde{\phi}^{k+1} \|^2 \\
=&M \lambda  ( \Delta F^{\prime}(\phi^{k}), \Delta^2 \tilde{\phi}^{k+1} )-   (  \textbf{u} ^{k} \cdot \nabla  \phi^{k} , \Delta^2 \tilde{\phi}^{k+1} ).
\endaligned
\end{equation} 
Similarly to the estimate in \cite[Lemma 2.4]{shen2018convergence}, the first term on the right hand side of \eqref{e_error2} can be controlled by the following equation with the aid of \eqref{e_error1}:
\begin{equation}\label{e_error3}
\aligned
M \lambda  ( \Delta F^{\prime}(\phi^{k}), \Delta^2 \tilde{\phi}^{k+1} ) 
\leq  &\frac{M \lambda }{4} \| \Delta^2 \tilde{\phi}^{k+1} \|^2 +C(K_1) \| \Delta F^{\prime}(\phi^{k}) \|^2 \\
\leq & \frac{M \lambda }{4} \| \Delta^2 \tilde{\phi}^{k+1} \|^2+  \frac{M \lambda}{2} \| \Delta^2 \tilde{\phi}^{k} \|^2+ C(K_1).
\endaligned
\end{equation} 

Recalling \eqref{e_error_estimate4}, we have $  \| \textbf{u}^{n} \|_{H^1} \leq C $. Then using  \eqref{e_error1} and lemmas \ref{lem: Holder inequality} and \ref{lem: Interpolation inequalities}, 
the last term on the right hand side of \eqref{e_error2} can be bounded by
\begin{equation}\label{e_error5}
\aligned
- & (\textbf{u}^{k} \cdot \nabla \phi^k,  \Delta^2 \tilde{\phi}^{k+1}) \leq  \| \textbf{u}^{k}\|_{L^6} \| \nabla \phi^k \|_{L^3} \| \Delta^2   \tilde{\phi}^{k+1} \| \\
\leq &C   \| \textbf{u}^{k}\|_{H^1}  \| \nabla  \phi^k \|^{1/2}
  \| \nabla \tilde{\phi}^k \|_{H^1}^{1/2} \| \Delta^2  \tilde{\phi}^{k+1} \| \\
 \leq & C \| \textbf{u}^{k}\|_{H^1}^2  \| \nabla \tilde{\phi}^k \|_{H^1}^2 + \frac{M \lambda}{16}\| \Delta^2 \tilde{\phi}^{k+1}\|^2 +C \| \textbf{u}^{k}\|_{H^1}^2  \| \nabla \phi^k \|^2 \\
 \leq &  \frac{M \lambda}{4}\| \Delta^2 \tilde{\phi}^{k+1}\|^2+  C \| \textbf{u}^{k}\|_{H^1}^2  (\| \Delta \tilde{\phi}^k \|^2+C(K_1) ).
  \endaligned
\end{equation} 
Combining \eqref{e_error2} with \eqref{e_error3}-\eqref{e_error5} leads to
\begin{equation}\label{e_error6}
\aligned
\frac{1}{2 \Delta t}& ( \| \Delta \tilde{\phi}^{k+1} \|^2-\| \Delta \tilde{\phi}^{k} \|^2 +\| \Delta \tilde{\phi}^{k+1}- \Delta \tilde{\phi}^{k}\|^2 )+ \frac {M \lambda} {2} \| \Delta^2 \tilde{\phi}^{k+1} \|^2 +M \lambda \gamma \| \nabla \Delta \tilde{\phi}^{k+1} \|^2 \\
&\leq  \frac{M \lambda }{2} \| \Delta^2 \tilde{\phi}^{k} \|^2 +  C \| \textbf{u}^{k}\|_{H^1}^2  (\| \Delta \tilde{\phi}^k \|^2+C(K_1) )+ C(K_1). 
\endaligned
\end{equation} 
Then multiplying \eqref{e_error6} by $2\Delta t$ and summing over  $k$, $k=0,1,2,\ldots,n$, we have 
\begin{equation}\label{e_error7}
\aligned
 \| \Delta \tilde{\phi}^{n+1} \|^2&+ M \lambda \Delta t  \| \Delta^2 \tilde{\phi}^{n+1} \|^2 
+ M  \lambda \gamma \Delta t \sum\limits_{k=0}^{n} \| \nabla \Delta \tilde{\phi}^{k+1} \|^2 \\
\leq & \| \Delta \tilde{\phi}^{0} \|^2 + M  \lambda \Delta t  \| \Delta^2 \tilde{\phi}^{0} \|^2 + C \Delta t \sum\limits_{k=0}^{n} \| \textbf{u}^{k}\|_{H^1}^2 \| \Delta \tilde{\phi}^k \|^2 + C(K_1),
\endaligned
\end{equation} 
which, together with   lemma \ref{lem: gronwall1} and equations \eqref{e_model_semi2} and \eqref{e_error1}, lead to the desired result. 
\end{proof}
 
 \medskip
\begin{lemma}\label{lem: error_estimate_phi}
Under the assumption of Theorem \ref{thm: error_estimate_final},  we have
\begin{equation} \label{e_error_estimate_phi}
\aligned
&  \lambda (
 \| \nabla \tilde{e} _{\phi}^{n+1} \|^2-  \| \nabla \tilde{e} _{\phi}^{n} \|^2 +  \| \nabla \tilde{e} _{\phi}^{n+1}- \nabla \tilde{e} _{\phi}^{n} \|^2 ) + M \Delta t  \| \nabla \tilde{e} _{\mu}^{n+1} \|^2  \\
 &  +  \lambda \gamma ( \| \tilde{e} _{\phi}^{n+1}\|^2- \| \tilde{e} _{\phi}^{n}\|^2 +\| \tilde{e} _{\phi}^{n+1}- \tilde{e} _{\phi}^{n} \|^2 )  + M \Delta t   \|  \tilde{e} _{\mu}^{n+1} \|^2 \\
 \leq &  C \Delta t \| \nabla \tilde{e} _{\phi}^{n+1} \|^2  + C\Delta t  \| \tilde{e} _{\phi}^{n} \|^2 + C\Delta t  \| \nabla \tilde{e}_{\phi}^{n} \|^2 + C\Delta t   \| \nabla \tilde{e}_{\textbf{u}}^{n} \|^2  \\
&    + C  ( \| \nabla \tilde{\textbf{u}}^{n} \|^2 + \|  \tilde{\phi}^{n} \|_{H^1}^2 ) 
C_0^4 (\Delta t)^5 + C (\Delta t)^3,
\endaligned
\end{equation}
where $C$ is a positive constant  independent of $\Delta t$ and $C_0$.
\end{lemma}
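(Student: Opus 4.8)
The plan is to derive \eqref{e_error_estimate_phi} by testing the phase-field error equation with $2\Delta t\,\tilde{e}_{\mu}^{n+1}$, using the chemical-potential error relation to generate the telescoping $H^1$- and $L^2$-dissipation terms on the left, and a Poincar\'e--Wirtinger argument to upgrade the natural gradient bound on $\tilde{e}_{\mu}^{n+1}$ to a full $L^2$ bound. First I would write the error equations for the first-order scheme ($k=1$): subtracting \eqref{e_model_semi1}--\eqref{e_model_semi2} from \eqref{e_model_rA}--\eqref{e_model_rB} at $t^{n+1}$,
\[
\frac{\tilde{e}_{\phi}^{n+1}-e_{\phi}^{n}}{\Delta t}+\big[(\textbf{u}^{n}\cdot\nabla)\phi^{n}-(\textbf{u}(t^{n})\cdot\nabla)\phi(t^{n})\big]=M\Delta\tilde{e}_{\mu}^{n+1}-\mathcal{R}_{\phi}^{n+1},
\]
\[
\tilde{e}_{\mu}^{n+1}=-\lambda\Delta\tilde{e}_{\phi}^{n+1}+\lambda\gamma\tilde{e}_{\phi}^{n+1}+\lambda\big(F'(\phi^{n})-F'(\phi(t^{n}))\big)-\mathcal{R}_{\mu}^{n+1},
\]
where $\mathcal{R}_{\phi}^{n+1}$ collects the first-order truncation error of the backward difference quotient together with the error from the one-step-lagged convection, and $\mathcal{R}_{\mu}^{n+1}=\lambda\big(F'(\phi(t^{n+1}))-F'(\phi(t^{n}))\big)$; the regularity hypotheses of Theorem~\ref{thm: error_estimate_final} give $\|\mathcal{R}_{\phi}^{n+1}\|+\|\mathcal{R}_{\mu}^{n+1}\|_{1}\le C\Delta t$. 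I would then use the correction identities $e_{\phi}^{n}=\tilde{e}_{\phi}^{n}+(\eta_{1}^{n}-1)\tilde{\phi}^{n}$ and $e_{\textbf{u}}^{n}=\tilde{e}_{\textbf{u}}^{n}+(\eta_{1}^{n}-1)\tilde{\textbf{u}}^{n}$ together with $|1-\eta_{1}^{n}|=|1-\xi^{n}|^{2}\le C_{0}^{2}(\Delta t)^{2}$ (from \eqref{e_model_semi7} and \eqref{e_error_estimate3}), and the a priori bounds from \eqref{e_error1}, Lemma~\ref{lem_phi_H2_boundness} and \eqref{e_error_estimate4}, which yield $\|\phi^{n}\|_{2}+\|\tilde{\phi}^{n}\|_{2}+\|\textbf{u}^{n}\|_{1}+\|\tilde{\textbf{u}}^{n}\|_{2}\le C$ and hence $L^{\infty}$ bounds for $\phi^{n},\tilde{\phi}^{n},\textbf{u}^{n}$; in particular $F',F'',F'''$ evaluated at $\phi^{n}$ or $\tilde{\phi}^{n}$ are bounded, so these nonlinearities may be treated as locally Lipschitz.

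Taking the $L^2$ inner product of the phase error equation with $2\Delta t\,\tilde{e}_{\mu}^{n+1}$, the right side contributes $-2M\Delta t\|\nabla\tilde{e}_{\mu}^{n+1}\|^{2}$ (integration by parts, $\partial_{\textbf{n}}\tilde{\mu}^{n+1}=0$), plus $-2\Delta t(\mathcal{R}_{\phi}^{n+1},\tilde{e}_{\mu}^{n+1})$ and the convection difference tested against $2\Delta t\,\tilde{e}_{\mu}^{n+1}$. In the time term I split $\tilde{e}_{\phi}^{n+1}-e_{\phi}^{n}=(\tilde{e}_{\phi}^{n+1}-\tilde{e}_{\phi}^{n})-(\eta_{1}^{n}-1)\tilde{\phi}^{n}$ and substitute the chemical-potential error relation into the copy of $\tilde{e}_{\mu}^{n+1}$ paired with $\tilde{e}_{\phi}^{n+1}-\tilde{e}_{\phi}^{n}$; the $-\lambda\Delta\tilde{e}_{\phi}^{n+1}$ and $\lambda\gamma\tilde{e}_{\phi}^{n+1}$ contributions, after one integration by parts and the identity $2a(a-b)=a^{2}-b^{2}+(a-b)^{2}$, produce exactly the telescoping groups $\lambda(\|\nabla\tilde{e}_{\phi}^{n+1}\|^{2}-\|\nabla\tilde{e}_{\phi}^{n}\|^{2}+\|\nabla\tilde{e}_{\phi}^{n+1}-\nabla\tilde{e}_{\phi}^{n}\|^{2})$ and $\lambda\gamma(\|\tilde{e}_{\phi}^{n+1}\|^{2}-\|\tilde{e}_{\phi}^{n}\|^{2}+\|\tilde{e}_{\phi}^{n+1}-\tilde{e}_{\phi}^{n}\|^{2})$ appearing on the left of \eqref{e_error_estimate_phi}. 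The remaining left-side quantity $M\Delta t\|\tilde{e}_{\mu}^{n+1}\|^{2}$ is obtained from the gradient dissipation through the Poincar\'e--Wirtinger inequality $\|\tilde{e}_{\mu}^{n+1}\|^{2}\le C(\|\nabla\tilde{e}_{\mu}^{n+1}\|^{2}+|\,\overline{\tilde{e}_{\mu}^{n+1}}\,|^{2})$, with the spatial mean controlled by integrating the chemical-potential relation: $|\,\overline{\tilde{e}_{\mu}^{n+1}}\,|\le C(\|\tilde{e}_{\phi}^{n+1}\|+\|e_{\phi}^{n}\|+\Delta t)$.

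It then remains to absorb the leftover terms into the right side of \eqref{e_error_estimate_phi}, which is the bulk of the work since all the coupling is treated explicitly. To prevent an $O(1)$ multiple of $\|\tilde{e}_{\phi}^{n}\|^{2}$ from appearing, I would, wherever a factor $\Delta t$ is needed, reinsert it by using the phase equation itself, $\tilde{e}_{\phi}^{n+1}-\tilde{e}_{\phi}^{n}=\Delta t\big[M\Delta\tilde{e}_{\mu}^{n+1}-\mathcal{R}_{\phi}^{n+1}-(\text{convection difference})\big]+(\eta_{1}^{n}-1)\tilde{\phi}^{n}$; applied to the nonlinear potential term $\lambda(\tilde{e}_{\phi}^{n+1}-\tilde{e}_{\phi}^{n},F'(\phi^{n})-F'(\phi(t^{n})))$ this gives, after a further integration by parts, $\lambda\Delta t(\nabla\tilde{e}_{\mu}^{n+1},\nabla(F'(\phi^{n})-F'(\phi(t^{n}))))$ with $\|\nabla(F'(\phi^{n})-F'(\phi(t^{n})))\|\le C\|e_{\phi}^{n}\|_{1}\le C(\|\tilde{e}_{\phi}^{n}\|_{1}+C_{0}^{2}(\Delta t)^{2}\|\tilde{\phi}^{n}\|_{1})$ by boundedness of $F'',F'''$ and $\|\phi^{n}\|_{2}$, so Young's inequality produces $C\Delta t\|\tilde{e}_{\phi}^{n}\|^{2}+C\Delta t\|\nabla\tilde{e}_{\phi}^{n}\|^{2}+C\|\tilde{\phi}^{n}\|_{1}^{2}C_{0}^{4}(\Delta t)^{5}$ plus an absorbable $\epsilon\Delta t\|\nabla\tilde{e}_{\mu}^{n+1}\|^{2}$. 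For the convection difference I would write it as $(\tilde{e}_{\textbf{u}}^{n}\cdot\nabla)\phi^{n}+((\eta_{1}^{n}-1)\tilde{\textbf{u}}^{n}\cdot\nabla)\phi^{n}+(\textbf{u}(t^{n})\cdot\nabla)\tilde{e}_{\phi}^{n}+(\eta_{1}^{n}-1)(\textbf{u}(t^{n})\cdot\nabla)\tilde{\phi}^{n}$, test each piece against $2\Delta t\,\tilde{e}_{\mu}^{n+1}$, integrate by parts (boundary terms vanish since $\textbf{u}(t^{n})|_{\partial\Omega}=0$, but the divergence $\nabla\cdot\tilde{e}_{\textbf{u}}^{n}$ must be retained because $\tilde{\textbf{u}}^{n}$ is only approximately solenoidal), and apply Lemmas~\ref{lem: Holder inequality}--\ref{lem: Interpolation inequalities}, the trilinear estimates \eqref{e_estimate for trilinear form2}, the a priori bounds and Young's inequality: the two $(\eta_{1}^{n}-1)$-pieces, carrying $\Delta t$ from the test and $(\eta_{1}^{n}-1)^{2}=O(C_{0}^{4}(\Delta t)^{4})$ after Young, give the $C(\|\nabla\tilde{\textbf{u}}^{n}\|^{2}+\|\tilde{\phi}^{n}\|_{1}^{2})C_{0}^{4}(\Delta t)^{5}$ term, while the $\tilde{e}_{\textbf{u}}^{n}$ and $\tilde{e}_{\phi}^{n}$ pieces give $C\Delta t(\|\nabla\tilde{e}_{\textbf{u}}^{n}\|^{2}+\|\tilde{e}_{\phi}^{n}\|^{2}+\|\nabla\tilde{e}_{\phi}^{n}\|^{2})$ plus absorbable $\epsilon\Delta t(\|\nabla\tilde{e}_{\mu}^{n+1}\|^{2}+\|\tilde{e}_{\mu}^{n+1}\|^{2})$. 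Finally $2\Delta t(\mathcal{R}_{\phi}^{n+1},\tilde{e}_{\mu}^{n+1})\le\epsilon\Delta t\|\tilde{e}_{\mu}^{n+1}\|^{2}+C(\Delta t)^{3}$, and $2(\tilde{e}_{\phi}^{n+1}-\tilde{e}_{\phi}^{n},\mathcal{R}_{\mu}^{n+1})$, rewritten via the phase equation and integrated by parts using $\|\nabla\mathcal{R}_{\mu}^{n+1}\|\le C\Delta t$, likewise contributes only $\epsilon\Delta t\|\nabla\tilde{e}_{\mu}^{n+1}\|^{2}+C(\Delta t)^{3}$ plus lower-order $\eta$-correction terms; choosing $\epsilon$ small then yields \eqref{e_error_estimate_phi}.

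The hardest part will be the bookkeeping of the explicit convection term. Because $\tilde{\textbf{u}}^{n}$ is not divergence-free in the consistent-splitting scheme, the skew-symmetry \eqref{e_skew-symmetric2} is unavailable, so one must split the difference carefully and integrate by parts to place every usable derivative on $\tilde{e}_{\mu}^{n+1}$ — which is precisely why the Poincar\'e--Wirtinger bound on $\|\tilde{e}_{\mu}^{n+1}\|$ must be available as a sink — while checking that each $\eta$-correction piece lands at the high order $C_{0}^{4}(\Delta t)^{5}$ rather than contaminating the $O(\Delta t)$ terms. All of this must be carried out without invoking the not-yet-established bound $\|\tilde{e}_{\phi}^{n+1}\|\lesssim\Delta t$, which is exactly why the substitution $\tilde{e}_{\phi}^{n+1}-\tilde{e}_{\phi}^{n}=\Delta t(\cdots)+(\eta_{1}^{n}-1)\tilde{\phi}^{n}$ is indispensable.
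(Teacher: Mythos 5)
Your proposal follows essentially the same route as the paper: test the phase error equation against $\tilde{e}_{\mu}^{n+1}$, use the chemical-potential error relation to generate the telescoping $H^1$ and weighted $L^2$ groups, regain a factor of $\Delta t$ in the $F'$-difference term by substituting the phase equation for $\tilde{e}_{\phi}^{n+1}-\tilde{e}_{\phi}^{n}$ and integrating by parts against $\nabla\tilde{e}_{\mu}^{n+1}$, split the convection difference into a consistency part and an error part, and convert $e$ to $\tilde{e}$ via $|1-\eta_{1}^{n}|\le C_{0}^{2}(\Delta t)^{2}$ to land the $C_{0}^{4}(\Delta t)^{5}$ remainders. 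Two small deviations: first, the scheme's backward difference uses $\tilde{\phi}^{n}$ (since \eqref{e_model_semi1} contains $A_k(\tilde{\phi}^{n})$), not the corrected $\phi^{n}$, so the error equation already reads $(\tilde{e}_{\phi}^{n+1}-\tilde{e}_{\phi}^{n})/\Delta t$ and your extra $(\eta_{1}^{n}-1)\tilde{\phi}^{n}$ correction in the time term is unnecessary (though harmless, since you bound it at high order). Second, the paper obtains $M\Delta t\|\tilde{e}_{\mu}^{n+1}\|^{2}$ on the left by additionally testing the chemical-potential error relation with $M\tilde{e}_{\mu}^{n+1}$ (and the phase error equation with $\lambda\tilde{e}_{\phi}^{n+1}$), which yields the exact coefficient $M$; your Poincar\'e--Wirtinger route with the mean controlled through the $\mu$-relation is also viable but delivers a smaller constant and an extra $C\Delta t\|\tilde{e}_{\phi}^{n+1}\|^{2}$ on the right, both of which are absorbable in the subsequent Gr\"onwall step. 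Neither deviation constitutes a gap.
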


 \begin{proof}
Let $R_{\phi}^{n+1}$ be the truncation error defined by 
\begin{equation} \label{e_phi1}
\aligned
R_{\phi}^{n+1}=\frac{\partial \phi(t^{n+1})}{\partial t}- \frac{\phi(t^{n+1})-\phi(t^{n})}{\Delta t} =\frac{1}{\Delta t}\int_{t^n}^{t^{n+1}}(t^n-t)\frac{\partial^2 \phi }{\partial t^2}dt.
\endaligned
\end{equation}
 Subtracting \eqref{e_model_rA} at $t^{n+1}$ from \eqref{e_model_semi1}, we have
\begin{equation}\label{e_phi2}
\aligned
\frac{ \tilde{e} _{\phi}^{n+1}- \tilde{e} _{\phi}^n}{ \Delta t } - & M \Delta \tilde{e} _{\mu}^{n+1} = (\textbf{u}(t^{n+1}) \cdot \nabla ) \phi(t^{n+1}) - ( \textbf{u} ^{n} \cdot \nabla ) \phi^{n} + R_{\phi}^{n+1} . 
\endaligned
\end{equation}
Taking the inner product of \eqref{e_phi2} with $ \tilde{e} _{\mu}^{n+1}$ and $ \lambda \tilde{e} _{\phi}^{n+1} $, respectively leads to  
\begin{equation}\label{e_phi3}
\aligned
( \frac{ \tilde{e} _{\phi}^{n+1} - \tilde{e} _{\phi}^n}{ \Delta t },  \tilde{e} _{\mu}^{n+1}) +&  M \| \nabla \tilde{e} _{\mu}^{n+1} \|^2 \\
& =  ( (\textbf{u}(t^{n+1}) \cdot \nabla ) \phi(t^{n+1}) - ( \textbf{u} ^{n} \cdot \nabla ) \phi^{n},  \tilde{e} _{\mu}^{n+1}) + ( R_{\phi}^{n+1},  \tilde{e} _{\mu}^{n+1}) ,
\endaligned
\end{equation}
and 
\begin{equation}\label{e_phi4}
\aligned
  \frac{ \lambda }{2 \Delta t} ( \| \tilde{e} _{\phi}^{n+1} \|^2- & \| \tilde{e} _{\phi}^{n} \|^2+  \| \tilde{e} _{\phi}^{n+1}- \tilde{e} _{\phi}^{n} \|^2 )
   \\
= & \lambda ( (\textbf{u}(t^{n+1}) \cdot \nabla ) \phi(t^{n+1}) - ( \textbf{u} ^{n} \cdot \nabla ) \phi^{n},
 \tilde{e} _{\phi}^{n+1}) \\
&+ \lambda  ( R_{\phi}^{n+1},  \tilde{e} _{\phi}^{n+1}) - M \lambda ( \nabla \tilde{e} _{\mu}^{n+1}, \nabla \tilde{e} _{\phi}^{n+1} ) .
\endaligned
\end{equation}
Subtracting \eqref{e_model_rB} at $t^{n+1}$ from \eqref{e_model_semi2}, we have
\begin{equation}\label{e_phi5}
\aligned
 \tilde{e} _{\mu}^{n+1}= & - \lambda \Delta  \tilde{e} _{\phi}^{n+1} +  \lambda \gamma  \tilde{e} _{\phi}^{n+1} + \lambda  F^{\prime}( \phi^n ) - \lambda  F^{\prime}( \phi(t^{n+1}) ) . 
\endaligned
\end{equation}
Taking the inner product of \eqref{e_phi5} with $ M  \tilde{e} _{\mu}^{n+1}$ and $ \frac{  \tilde{e} _{\phi}^{n+1}-  \tilde{e} _{\phi}^{n} } {\Delta t} $, respectively results in  
\begin{equation}\label{e_phi6}
\aligned
 M \|  \tilde{e} _{\mu}^{n+1} \|^2= & M \lambda ( \nabla  \tilde{e} _{\mu}^{n+1}, \nabla  \tilde{e} _{\phi}^{n+1} ) + M \lambda \gamma ( \tilde{e} _{\phi}^{n+1},  \tilde{e} _{\mu}^{n+1} ) \\
& +M \lambda \left(  F^{\prime}( \phi^{n}) - F^{\prime}( \phi(t^{n+1}) ) , \tilde{e} _{\mu}^{n+1} \right),
\endaligned
\end{equation}
and 
\begin{equation}\label{e_phi7}
\aligned
 ( \frac{ \tilde{e} _{\phi}^{n+1}- \tilde{e} _{\phi}^{n} } {\Delta t}, \tilde{e} _{\mu}^{n+1} )= &\frac{ \lambda }{2\Delta t} (
 \| \nabla \tilde{e} _{\phi}^{n+1} \|^2-  \| \nabla \tilde{e} _{\phi}^{n} \|^2 +  \| \nabla \tilde{e} _{\phi}^{n+1}- \nabla \tilde{e} _{\phi}^{n} \|^2 ) \\
 & + \lambda \left(  F^{\prime}( \phi^{n}) - F^{\prime}( \phi(t^{n+1}) ), \frac{ \tilde{e} _{\phi}^{n+1}- \tilde{e} _{\phi}^{n} } {\Delta t} \right) \\
 &  + \frac{ \lambda \gamma}{2 \Delta t} ( \| \tilde{e} _{\phi}^{n+1}\|^2- \| \tilde{e} _{\phi}^{n}\|^2 +\| \tilde{e} _{\phi}^{n+1}- \tilde{e} _{\phi}^{n} \|^2 ).
\endaligned
\end{equation}
Combining \eqref{e_phi3} with \eqref{e_phi4}-\eqref{e_phi7}, we have
\begin{equation}\label{e_phi8}
\aligned
& \frac{ \lambda }{2\Delta t} (
 \| \nabla \tilde{e} _{\phi}^{n+1} \|^2-  \| \nabla \tilde{e} _{\phi}^{n} \|^2 +  \| \nabla \tilde{e} _{\phi}^{n+1}- \nabla \tilde{e} _{\phi}^{n} \|^2 ) + M \| \nabla \tilde{e} _{\mu}^{n+1} \|^2  \\
 &  + \frac{ \lambda \gamma}{ \Delta t} ( \| \tilde{e} _{\phi}^{n+1}\|^2- \| \tilde{e} _{\phi}^{n}\|^2 +\| \tilde{e} _{\phi}^{n+1}- \tilde{e} _{\phi}^{n} \|^2 )  + M \|  \tilde{e} _{\mu}^{n+1} \|^2 \\
 = &   ( (\textbf{u}(t^{n+1}) \cdot \nabla ) \phi(t^{n+1}) - ( \textbf{u} ^{n} \cdot \nabla ) \phi^{n},  \tilde{e} _{\mu}^{n+1} +  \lambda  \tilde{e} _{\phi}^{n+1} ) + ( R_{\phi}^{n+1},  \tilde{e} _{\mu}^{n+1}) \\
&+ \lambda  ( R_{\phi}^{n+1},  \tilde{e} _{\phi}^{n+1})  + M \lambda \gamma ( \tilde{e} _{\phi}^{n+1},  \tilde{e} _{\mu}^{n+1} ) \\
& + \lambda \left(  F^{\prime}( \phi^{n}) - F^{\prime}( \phi(t^{n+1}) , M \tilde{e} _{\mu}^{n+1} - \frac{ \tilde{e} _{\phi}^{n+1}- \tilde{e} _{\phi}^{n} } {\Delta t} \right).
\endaligned
\end{equation}
Using lemmas \ref{lem: Holder inequality} and \ref{lem: Interpolation inequalities}, the first term on the right hand side of \eqref{e_phi8} can be recast as
\begin{equation}\label{e_phi_error1}
\aligned
 &   \left( (\textbf{u}(t^{n+1}) \cdot \nabla ) \phi(t^{n+1}) - ( \textbf{u} ^{n} \cdot \nabla ) \phi^{n},  \tilde{e} _{\mu}^{n+1} +  \lambda  \tilde{e} _{\phi}^{n+1} \right) \\
 = &    \left( (\textbf{u}(t^{n+1}) \cdot \nabla ) \phi(t^{n+1}) -(\textbf{u}(t^{n}) \cdot \nabla ) \phi(t^{n}),   \tilde{e} _{\mu}^{n+1} +  \lambda  \tilde{e} _{\phi}^{n+1} \right)  \\
  & + \left((\textbf{u}(t^{n}) \cdot \nabla ) \phi(t^{n}) -  ( \textbf{u} ^{n} \cdot \nabla ) \phi^{n} ,   \tilde{e} _{\mu}^{n+1} +  \lambda  \tilde{e} _{\phi}^{n+1} \right)  \\
  =  &  \left( (\textbf{u}(t^{n+1}) \cdot \nabla ) \phi(t^{n+1}) -(\textbf{u}(t^{n}) \cdot \nabla ) \phi(t^{n}),  \tilde{e} _{\mu}^{n+1} +  \lambda  \tilde{e} _{\phi}^{n+1} \right)  \\
  & - \left( ( e_{\textbf{u}}^{n} \cdot \nabla ) \phi(t^{n}), \tilde{e} _{\mu}^{n+1} \right) -  \left( ( \textbf{u} ^{n} \cdot \nabla ) e_{\phi}^{n},  \tilde{e} _{\mu}^{n+1} +  \lambda  \tilde{e} _{\phi}^{n+1} \right) \\
  = & \frac{ M }{ 8 } \| \nabla \tilde{e} _{\mu}^{n+1} \|^2+  C \| \nabla \tilde{e} _{\phi}^{n+1} \|^2 + C \| \phi \|_{L^{\infty}(0,T; H^2(\Omega))}^2  \| e_{\textbf{u}}^{n} \|^2  \\
  &+ C \| \textbf{u} ^{n} \|_{L^3}^2 \| \nabla  e_{\phi}^{n} \|_{L^2}^2  + C \| \textbf{u} \|_{L^{\infty}(0,T; H^1(\Omega))} ^2 \| \phi \|_{W^{1,\infty}(0,T; H^1(\Omega))}^2 (\Delta t)^2 \\
  & + C \| \textbf{u} \|_{W^{1,\infty}(0,T; L^2(\Omega))} ^2 \| \phi \|_{L^{\infty}(0,T; H^1(\Omega))}^2 (\Delta t)^2.
\endaligned
\end{equation}
Recalling lemma \ref{lem_phi_H2_boundness} and \eqref{e_phi2}, the last term on the right hand side of \eqref{e_phi8} can be estimated by
\begin{equation}\label{e_phi_error2}
\aligned
& \lambda \left(  F^{\prime}( \phi^{n}) - F^{\prime}( \phi(t^{n+1}) , M \tilde{e} _{\mu}^{n+1} - \frac{ \tilde{e} _{\phi}^{n+1}- \tilde{e} _{\phi}^{n} } {\Delta t} \right) \\
= & \lambda \left(  F^{\prime}( \phi^{n}) - F^{\prime}( \phi(t^{n+1}) , M \tilde{e} _{\mu}^{n+1}  - R_{\phi}^{n+1} \right) \\
& - \lambda \left(  F^{\prime}( \phi^{n}) - F^{\prime}( \phi(t^{n+1}) , (\textbf{u}(t^{n+1}) \cdot \nabla ) \phi(t^{n+1}) - ( \textbf{u} ^{n} \cdot \nabla ) \phi^{n} \right) \\
& + \lambda \left( \nabla ( F^{\prime}( \phi^{n}) - F^{\prime}( \phi(t^{n+1}) ) , M \nabla \tilde{e} _{\mu}^{n+1}  \right) \\
\leq & \frac{ M }{8} \| \tilde{e} _{\mu}^{n+1}  \|^2 +  \frac{ M }{8} \| \nabla \tilde{e} _{\mu}^{n+1}  \|^2  + C \| e_{\phi}^{n} \|^2 + C \| \nabla e_{\phi}^{n} \|^2 \\
&  + C \| \phi \|_{W^{1,\infty}(0,T; H^1(\Omega))}^2 (\Delta t)^2  + C \| \phi \|_{L^{\infty}(0,T; H^2(\Omega))}^2  \| e_{\textbf{u}}^{n} \|^2  \\
  &+ C \| \textbf{u} ^{n} \|_{L^3}^2 \| \nabla  e_{\phi}^{n} \|_{L^2}^2  + C \| \textbf{u} \|_{L^{\infty}(0,T; H^1(\Omega))} ^2 \| \phi \|_{W^{1,\infty}(0,T; H^1(\Omega))}^2 (\Delta t)^2 \\
  & + C \| \textbf{u} \|_{W^{1,\infty}(0,T; L^2(\Omega))} ^2 \| \phi \|_{L^{\infty}(0,T; H^1(\Omega))}^2 (\Delta t)^2 \\
  & + C \| \phi \|_{W^{2,\infty}(0,T; L^2(\Omega))}^2  (\Delta t)^2 .
\endaligned
\end{equation}
Using Poincare inequality, we have $\| e_{\textbf{u}}^{n} \|^2 \leq C \| \nabla e_{\textbf{u}}^{n} \|^2$ and using \eqref{e_model_semi5} and \eqref{e_error_estimate3}, we have 
\begin{equation}\label{e_relation_u_tilde}
\aligned
  \| \nabla e_{\textbf{u}}^{n} \|^2 \leq&  2 \| \nabla \tilde{e}_{\textbf{u}}^{n} \|^2 + 2 |1- \eta^{n} |^2 \| \nabla \tilde{\textbf{u}}^{n} \|^2 \\
\leq & 2 \| \nabla \tilde{e}_{\textbf{u}}^{n} \|^2 + 2  \| \nabla \tilde{\textbf{u}}^{n} \|^2 C_0^4 (\Delta t)^4,
\endaligned
\end{equation} 
and 
\begin{equation}\label{e_relation_phi_tilde}
\aligned
  \|  e_{\phi}^{n} \|_{H^2}^2 \leq &   2 \| \tilde{e}_{\phi}^{n} \|_{H^2}^2 + 2  \|  \tilde{\phi}^{n} \|_{H^2}^2 C_0^4 (\Delta t)^4,
\endaligned
\end{equation}
Then combining \eqref{e_phi8} with \eqref{e_phi_error1} and \eqref{e_phi_error2} and multiplying $ 2\Delta t$ on both sides, we have
\begin{equation}\label{e_phi_error3}
\aligned
&  \lambda (
 \| \nabla \tilde{e} _{\phi}^{n+1} \|^2-  \| \nabla \tilde{e} _{\phi}^{n} \|^2 +  \| \nabla \tilde{e} _{\phi}^{n+1}- \nabla \tilde{e} _{\phi}^{n} \|^2 ) + M \Delta t  \| \nabla \tilde{e} _{\mu}^{n+1} \|^2  \\
 &  +  \lambda \gamma ( \| \tilde{e} _{\phi}^{n+1}\|^2- \| \tilde{e} _{\phi}^{n}\|^2 +\| \tilde{e} _{\phi}^{n+1}- \tilde{e} _{\phi}^{n} \|^2 )  + M \Delta t   \|  \tilde{e} _{\mu}^{n+1} \|^2 \\
 \leq &  C \Delta t  \| \nabla \tilde{e} _{\phi}^{n+1} \|^2  + C \Delta t  \| \tilde{e} _{\phi}^{n} \|^2 + C \Delta t  \| \nabla \tilde{e}_{\phi}^{n} \|^2 + C \Delta t  \| \textbf{u} ^{n} \|_{L^3}^2 \| \nabla \tilde{e}_{\phi}^{n} \|^2  \\
&  + C \| \phi \|_{W^{1,\infty}(0,T; H^1(\Omega))}^2 (\Delta t)^3  + C \Delta t  \| \phi \|_{L^{\infty}(0,T; H^2(\Omega))}^2  \| \nabla \tilde{e}_{\textbf{u}}^{n} \|^2  \\
  & + C \| \textbf{u} \|_{L^{\infty}(0,T; H^1(\Omega))} ^2 \| \phi \|_{W^{1,\infty}(0,T; H^1(\Omega))}^2 (\Delta t)^3 + C  \| \nabla \tilde{\textbf{u}}^{n} \|^2 C_0^4 (\Delta t)^5 \\
  & + C \| \textbf{u} \|_{W^{1,\infty}(0,T; L^2(\Omega))} ^2 \| \phi \|_{L^{\infty}(0,T; H^1(\Omega))}^2 (\Delta t)^3 + C   \|  \tilde{\phi}^{n} \|_{H^1}^2 C_0^4 (\Delta t)^5 \\
  & + C \| \phi \|_{W^{2,\infty}(0,T; L^2(\Omega))}^2  (\Delta t)^3 ,
\endaligned
\end{equation}
which implies the desired result \eqref{e_error_estimate_phi}.
\end{proof} 

Next we first establish error estimate for the  commutator of the Laplacian and Leray-Helmholtz projection operators to bound the part of pressure. Similar to \cite{liu2007stability},  we let $\mathcal{P}$ denote the Leray-Helmholtz projection operator onto divergence-free fields, defined as follows. Given any $\textbf{b} \in L^2(\Omega, \mathbb{R}^d)$, there is a unique $q \in H^1(\Omega) $ with $ \int_{\Omega} q =0 $ such that $ \mathcal{P} \textbf{b} =  \textbf{b} + \nabla q$ satisfies 
\begin{equation}\label{e_error_stokespre1}
\aligned
(\textbf{b} + \nabla q, \nabla \phi ) = ( \mathcal{P} \textbf{b}, \nabla \phi )=0, \ \forall \phi \in 
H^1(\Omega).
\endaligned
\end{equation} 
Then for $\textbf{u} \in L^2(\Omega, \mathbb{R}^d)$, we have  \cite{liu2007stability}
\begin{equation}\label{e_error_stokespre2}
\aligned
\Delta \mathcal{P} \textbf{u} = \Delta \textbf{u} - \nabla \nabla \cdot \textbf{u} = - \nabla \times \nabla \times  \textbf{u}.
\endaligned
\end{equation} 
Next we recall the estimate for commutator of the Laplacian and Leray-Helmholtz projection operators. 
\begin{lemma}\label{lem: commutator}
\cite{liu2007stability} Let $\Omega \subset \mathbb{R}^d $ be a connected bounded domain with $C^3$ boundary. Then for any $\epsilon >0$, there exists a positive constant $C \geq 0$ such that for all vector fields $\textbf{u} \in H^2 \cap H^1_0 (\Omega, \mathbb{R}^d )$,
\begin{equation}\label{e_error_stokespre3}
\aligned
\int_{\Omega} | (\Delta \mathcal{P} - \mathcal{P} \Delta) \textbf{u} |^2 \leq (\frac1 2 + \epsilon ) \int_{\Omega} |\Delta \textbf{u} |^2 + C \int_{\Omega} |\nabla \textbf{u} |^2.
\endaligned
\end{equation} 
\end{lemma}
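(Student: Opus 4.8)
The plan is to realise the commutator as the gradient of a harmonic function, reduce its $L^2$-norm to a boundary pairing, exploit the homogeneous Dirichlet condition to express the Neumann data through a \emph{single} normal derivative of $\textbf{u}$, and finally pin the constant $\tfrac12$ by a local model computation. First, write $\mathcal{P}=I-\mathcal{Q}$, where $\mathcal{Q}\textbf{b}=-\nabla q_{\textbf{b}}$ with $q_{\textbf{b}}\in H^1(\Omega)/\mathbb{R}$ solving $\Delta q_{\textbf{b}}=-\nabla\cdot\textbf{b}$ in $\Omega$ and $\partial_{\textbf{n}}q_{\textbf{b}}=-\textbf{b}\cdot\textbf{n}$ on $\Gamma$. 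For $\textbf{u}\in \textbf{H}^2\cap \textbf{H}^1_0$ one has $\textbf{u}\cdot\textbf{n}|_{\Gamma}=0$, hence $\Delta\mathcal{P}\textbf{u}=\Delta\textbf{u}-\nabla\nabla\cdot\textbf{u}=-\nabla\times\nabla\times\textbf{u}$, while $\mathcal{P}\Delta\textbf{u}=\Delta\textbf{u}+\nabla q_{\Delta\textbf{u}}$. Subtracting, $(\Delta\mathcal{P}-\mathcal{P}\Delta)\textbf{u}=-\nabla\theta$ with $\theta:=\nabla\cdot\textbf{u}+q_{\Delta\textbf{u}}$; a direct check shows $\Delta\theta=0$ in $\Omega$ and $\partial_{\textbf{n}}\theta=\partial_{\textbf{n}}(\nabla\cdot\textbf{u})-\textbf{n}\cdot\Delta\textbf{u}=\textbf{n}\cdot(\nabla\times\nabla\times\textbf{u})=:g$ on $\Gamma$, with $\int_{\Gamma}g=0$ by the divergence theorem. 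Normalising $\int_{\Omega}\theta=0$ and integrating by parts, $\int_{\Omega}|(\Delta\mathcal{P}-\mathcal{P}\Delta)\textbf{u}|^2=\|\nabla\theta\|_{L^2(\Omega)}^2=\langle g,\theta\rangle_{\Gamma}$.

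Next, I would use $\textbf{u}|_{\Gamma}=0$: the tangential gradient of each component of $\textbf{u}$ vanishes on $\Gamma$, so $\nabla\textbf{u}|_{\Gamma}=\textbf{n}\otimes\partial_{\textbf{n}}\textbf{u}$, whence $(\nabla\times\textbf{u})|_{\Gamma}=\textbf{n}\times\partial_{\textbf{n}}\textbf{u}$ is tangential. Applying the surface identity $\textbf{n}\cdot(\nabla\times\textbf{w})=\operatorname{div}_{\Gamma}(\textbf{w}_{\top}\times\textbf{n})$ with $\textbf{w}=\nabla\times\textbf{u}$ gives $g=\operatorname{div}_{\Gamma}\bigl((\partial_{\textbf{n}}\textbf{u})_{\top}\bigr)$ (justified by density from smooth $\textbf{u}$), and integration by parts on the closed surface $\Gamma$ yields
\[
\|\nabla\theta\|_{L^2(\Omega)}^2=-\langle(\partial_{\textbf{n}}\textbf{u})_{\top},\nabla_{\Gamma}\theta\rangle_{\Gamma}.
\]
Thus the commutator norm is controlled by a boundary pairing involving only one normal derivative of $\textbf{u}$.

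A soft bound is then immediate: $\|\nabla\theta\|^2\le\|(\partial_{\textbf{n}}\textbf{u})_{\top}\|_{H^{1/2}(\Gamma)}\,\|\nabla_{\Gamma}\theta\|_{H^{-1/2}(\Gamma)}\le C\,\|\partial_{\textbf{n}}\textbf{u}\|_{H^{1/2}(\Gamma)}\,\|\theta\|_{H^{1/2}(\Gamma)}\le C\,\|\textbf{u}\|_{H^2(\Omega)}\,\|\nabla\theta\|_{L^2(\Omega)}$, using the trace theorem, the boundedness $\nabla_{\Gamma}\colon H^{1/2}(\Gamma)\to H^{-1/2}(\Gamma)$, and $\|\theta\|_{H^{1/2}(\Gamma)}\le C\|\nabla\theta\|_{L^2(\Omega)}$ for the mean-zero harmonic $\theta$; together with the elliptic estimate $\|\textbf{u}\|_{H^2(\Omega)}\le C(\|\Delta\textbf{u}\|+\|\nabla\textbf{u}\|)$ for $\textbf{H}^1_0$ fields this gives $\int_{\Omega}|(\Delta\mathcal{P}-\mathcal{P}\Delta)\textbf{u}|^2\le C(\|\Delta\textbf{u}\|^2+\|\nabla\textbf{u}\|^2)$. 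To bring the coefficient of $\|\Delta\textbf{u}\|^2$ down to $\tfrac12+\epsilon$, I would localise: away from $\Gamma$ the commutator is of lower order, and near $\Gamma$ a partition of unity together with a $C^2$ boundary-flattening change of variables reduces the leading part to an explicit constant-coefficient half-space operator, whose contribution is computed directly (by tangential Fourier transform, or a Rellich identity in the half-space) to carry precisely the factor $\tfrac12$ in front of $\|\Delta\textbf{u}\|_{L^2}^2$. The remainders produced by the cutoffs, by the flattening diffeomorphism (commutators of order $\le1$), and by the boundary curvature involve at most one derivative of $\textbf{u}$ and, after invoking the $H^2$ bound just obtained, are absorbed into $C\|\nabla\textbf{u}\|_{L^2(\Omega)}^2$.

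The hard part will be pinning the constant $\tfrac12$: the soft energy argument loses track of constants at the trace, harmonic-extension and elliptic-regularity steps, none of which individually delivers $\tfrac12$, so the crux is the explicit model-problem computation in the half-space together with a careful check that every geometric remainder term (cutoffs, diffeomorphism, curvature) is genuinely one order lower and hence harmless. The remaining ingredients — the Hodge decomposition, the surface-divergence identity for $g$, and the standard trace and elliptic estimates — are routine.
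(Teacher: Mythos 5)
The paper does not actually prove this lemma: it is quoted verbatim from Liu, Liu and Pego \cite{liu2007stability}, so there is no internal proof to compare against. Measured against the proof in that reference, your outline reconstructs the correct strategy, and the steps you execute are sound: writing the commutator as $-\nabla\theta$ with $\theta=\nabla\cdot\textbf{u}+q_{\Delta\textbf{u}}$ harmonic, identifying $\partial_{\textbf{n}}\theta=\textbf{n}\cdot(\nabla\times\nabla\times\textbf{u})$, using $\textbf{u}|_{\Gamma}=0$ to write the vorticity trace as $\textbf{n}\times\partial_{\textbf{n}}\textbf{u}$, and reducing $\|\nabla\theta\|^2$ to the tangential boundary pairing $-\langle(\partial_{\textbf{n}}\textbf{u})_{\top},\nabla_{\Gamma}\theta\rangle_{\Gamma}$ is exactly how the cited argument begins, and your $H^{1/2}$--$H^{-1/2}$ duality bound correctly yields the non-sharp estimate $C(\|\Delta\textbf{u}\|^2+\|\nabla\textbf{u}\|^2)$.

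The genuine gap is that the entire content of the lemma --- the coefficient $\tfrac12+\epsilon<1$ in front of $\|\Delta\textbf{u}\|^2$ --- is precisely the part you defer. A crude constant $C$ is useless downstream: in \eqref{e_velocity5} and \eqref{e_velocity16} the paper needs $\nu\|\nabla e_{ps}( \tilde{e}_\textbf{u} )\|^2\le\nu\alpha\|\Delta\tilde e_{\textbf{u}}\|^2+\nu C_\alpha\|\nabla\tilde e_{\textbf{u}}\|^2$ with $\alpha<1$ so that the $\|\Delta\tilde e_{\textbf{u}}^{n}\|^2$ term coming from the lagged pressure can be absorbed into the left-hand side at the previous time level; with a constant $\ge 1$ the summation argument collapses. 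Your plan for obtaining $\tfrac12$ (partition of unity, boundary flattening, explicit tangential Fourier computation for the half-space model, with curvature and cutoff commutators absorbed as lower order) is the right one and is what \cite{liu2007stability} carries out, but as written it is a statement of intent rather than a proof: the half-space computation that actually produces the factor $\tfrac12$ is never performed, and the claim that every flattening and curvature remainder is genuinely one order lower is asserted, not checked. So: right route, sound reduction to the boundary, correct soft bound, but the decisive sharp-constant estimate is missing.
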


We define the Stokes pressure $p_s(\textbf{u})$ by 
\begin{equation}\label{e_error_stokespre4}
\aligned
\nabla p_s(\textbf{u}) =  (\Delta \mathcal{P} - \mathcal{P} \Delta) \textbf{u},
\endaligned
\end{equation} 
where the Stokes pressure is generated by the tangential part of vorticity at the boundary in two and three dimensions by \cite{liu2007stability,liu2009error}
\begin{equation}\label{e_error_stokespre5}
\aligned
\int_{\Omega} \nabla p_s(\textbf{u}) \cdot \nabla \phi = \int_{\Gamma} (\nabla \times \textbf{u} ) \cdot ( \textbf{n} \times \nabla \phi ), \ \forall \phi \in H^1(\Omega).
\endaligned
\end{equation} 
Then by using \eqref{e_error_stokespre2}, we have 
\begin{equation}\label{e_error_stokespre6}
\aligned
\nabla p_s(\textbf{u}) =  (\Delta \mathcal{P} - \mathcal{P} \Delta) \textbf{u} = (I - \mathcal{P} ) \Delta  \textbf{u} - \nabla \nabla \cdot \textbf{u} = (I - \mathcal{P} ) ( \Delta  \textbf{u} - \nabla \nabla \cdot \textbf{u} ). 
\endaligned
\end{equation} 
Recalling \eqref{e_error_stokespre1}, we have 
\begin{equation}\label{e_error_stokespre7}
\aligned
\int_{\Omega} \nabla p_s(\textbf{u}) \cdot \nabla \phi = \int_{\Omega} ( \Delta \textbf{u} - \nabla \nabla \cdot \textbf{u} ) \cdot \nabla \phi, \ \forall \phi \in H^1(\Omega).
\endaligned
\end{equation} 

\medskip
\begin{lemma}\label{lem: error_estimate_u}
Under the assumption of Theorem \ref{thm: error_estimate_final},  we have
\begin{equation}  \label{e_error_estimate_u}
\aligned
& \frac{M}{ 2 K_3} ( \| \nabla \tilde{e}_{\textbf{u}}^{n+1} \|^2 - \| \nabla \tilde{e}_{\textbf{u}}^{n} \|^2 + \| \nabla \tilde{e}_{\textbf{u}}^{n+1}- \nabla \tilde{e}_{\textbf{u}}^{n} \|^2) \\
& +\frac{M}{ 2 K_3} (1- \frac{3  (1-\alpha) }{8 }  ) \nu \Delta t \| \Delta \tilde{e}_{\textbf{u}}^{n+1} \|^2 \\
\leq & \frac{M}{ 2 K_3} (  \alpha +  \frac{ (1-\alpha) }{8 } ) \nu \Delta t \| \Delta \tilde{e}_\textbf{u} ^{n} \|^2+  C \Delta t \| \nabla  \tilde{e}_\textbf{u} ^{n} \|^2 \\
& + C \Delta t \| \tilde{e}_{\phi}^n \|^2 + C \Delta t \| \nabla \tilde{e}_{\phi}^n \|^2  + \frac{M}{  2} \Delta t  (\|  \tilde{e}_{\mu}^{n} \|^2+ \| \nabla \tilde{e}_{\mu}^{n} \|^2 ) \\
&   + C ( \| \Delta \tilde{\textbf{u}}^{n} \|^2+ \| \nabla \tilde{\textbf{u}}^{n} \|^2 )C_0^4 (\Delta t)^5 \\
 &  + C \|  \tilde{\phi}^{n} \|_{H^2}^2 C_0^4 (\Delta t)^5
 + C  \| \nabla  \tilde{\mu}^{n} \|^2 C_0^4 (\Delta t)^5  + C  (\Delta t)^3,
\endaligned
\end{equation}
where the positive constant $C$ is independent of $\Delta t$ and $C_0$. 
\end{lemma}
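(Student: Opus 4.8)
\emph{Overall strategy.} The plan is to follow the pattern of Lemma~\ref{lem: error_estimate_phi}, but now testing the velocity error equation with $-\Delta \tilde e_{\textbf{u}}^{n+1}$ in order to generate simultaneously the increment of $\|\nabla \tilde e_{\textbf{u}}^{n+1}\|^2$ and the dissipation $\nu\|\Delta \tilde e_{\textbf{u}}^{n+1}\|^2$, and to control the consistent‑splitting pressure by the commutator estimate of Lemma~\ref{lem: commutator}. First I would form the error equation: define the truncation error $R_{\textbf{u}}^{n+1}$ as in \eqref{e_phi1} with $\phi$ replaced by $\textbf{u}$, and subtract \eqref{e_model_rC} at $t^{n+1}$ from \eqref{e_model_semi3} with $k=1$. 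Using that $\textbf{u}(t^{n+1})$ is divergence free one obtains
\[
\frac{\tilde e_{\textbf{u}}^{n+1}-\tilde e_{\textbf{u}}^{n}}{\Delta t}-\nu\Delta \tilde e_{\textbf{u}}^{n+1}+\nabla\!\left(p^{n}-p(t^{n+1})\right)=\left(\mu^{n}\nabla\phi^{n}-\mu(t^{n+1})\nabla\phi(t^{n+1})\right)-\left(\textbf{u}^{n}\!\cdot\!\nabla\textbf{u}^{n}-\textbf{u}(t^{n+1})\!\cdot\!\nabla\textbf{u}(t^{n+1})\right)+R_{\textbf{u}}^{n+1}.
\]
Next I would eliminate the genuine pressure error: subtracting the exact pressure identity (test \eqref{e_model_rC} at $t^{n}$ against $\nabla q$ and use \eqref{e_error_stokespre7}) from \eqref{e_model_semi6} at level $n$, and using \eqref{e_error_stokespre2}, \eqref{e_error_stokespre6} and the definition \eqref{e_error_stokespre4} of the Stokes pressure, one rewrites $\nabla(p^{n}-p(t^{n+1}))$ as a Leray projection of the consistency terms plus $\nu\nabla p_s(\tilde e_{\textbf{u}}^{n})$ plus an $O(\Delta t)$ time–lag contribution. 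Thus the real pressure disappears at the price of the commutator term $\nu\nabla p_s(\tilde e_{\textbf{u}}^{n})=\nu(\Delta\mathcal{P}-\mathcal{P}\Delta)\tilde e_{\textbf{u}}^{n}$.

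\emph{Testing and the commutator.} Taking the $L^2$ inner product with $-\Delta \tilde e_{\textbf{u}}^{n+1}$ and integrating by parts — legitimate since $\tilde e_{\textbf{u}}^{n+1},\tilde e_{\textbf{u}}^{n}\in\textbf{H}^1_0(\Omega)$ — produces on the left $\frac{1}{2\Delta t}\big(\|\nabla \tilde e_{\textbf{u}}^{n+1}\|^2-\|\nabla \tilde e_{\textbf{u}}^{n}\|^2+\|\nabla \tilde e_{\textbf{u}}^{n+1}-\nabla \tilde e_{\textbf{u}}^{n}\|^2\big)+\nu\|\Delta \tilde e_{\textbf{u}}^{n+1}\|^2$. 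For the commutator term I would apply Lemma~\ref{lem: commutator}, $\|\nabla p_s(\tilde e_{\textbf{u}}^{n})\|^2\le(\tfrac12+\varepsilon)\|\Delta \tilde e_{\textbf{u}}^{n}\|^2+C\|\nabla \tilde e_{\textbf{u}}^{n}\|^2$ with $\alpha:=\tfrac12+\varepsilon\in(\tfrac12,1)$, followed by Cauchy–Schwarz and Young with the free parameter tuned so that precisely $\tfrac{3(1-\alpha)}{8}\,\nu\|\Delta \tilde e_{\textbf{u}}^{n+1}\|^2$ is moved to the left and $(\alpha+\tfrac{1-\alpha}{8})\,\nu\|\Delta \tilde e_{\textbf{u}}^{n}\|^2+C\|\nabla \tilde e_{\textbf{u}}^{n}\|^2$ is left on the right. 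This step requires $\tilde e_{\textbf{u}}^{n}\in\textbf{H}^2$, which follows from the $\textbf{H}^2$ bound of $\tilde{\textbf{u}}^{n}$ (obtained exactly as in Lemma~\ref{lem_phi_H2_boundness}, by testing \eqref{e_model_semi3} with $\Delta^2$‑type multipliers) together with the regularity hypothesis $\textbf{u}\in W^{1,\infty}(0,T;\textbf{H}^2)$.

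\emph{Nonlinear terms, remainders and scaling.} The phase‑coupling difference is decomposed as $(\mu^{n}-\mu(t^{n}))\nabla\phi(t^{n})+\mu^{n}\nabla(\phi^{n}-\phi(t^{n}))+\big(\mu(t^{n})\nabla\phi(t^{n})-\mu(t^{n+1})\nabla\phi(t^{n+1})\big)$ and bounded using Lemmas~\ref{lem: Holder inequality}–\ref{lem: Interpolation inequalities}, the a priori bound \eqref{e_error1} and Lemma~\ref{lem_phi_H2_boundness}; the convection difference $\textbf{u}^{n}\!\cdot\!\nabla\textbf{u}^{n}-\textbf{u}(t^{n+1})\!\cdot\!\nabla\textbf{u}(t^{n+1})$ is split the same way and estimated with Hölder/interpolation and the trilinear inequalities \eqref{e_estimate for trilinear form}–\eqref{e_estimate for trilinear form2}, the induction hypotheses \eqref{e_error_estimate3}–\eqref{e_error_estimate4} supplying uniform $\textbf{H}^2$ and $H^1$ bounds for $\textbf{u}^{n}$ and for $\mu^{n}$. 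The term $R_{\textbf{u}}^{n+1}$ and the $O(\Delta t)$ time–lag pieces are bounded by $C\Delta t\,\|\partial_{tt}\textbf{u}\|$ and the stated regularity. Every term being paired with $-\Delta \tilde e_{\textbf{u}}^{n+1}$, Young's inequality yields controllable multiples of $\nu\|\Delta \tilde e_{\textbf{u}}^{n+1}\|^2$ (absorbed into the left side) plus $\|\tilde e_{\phi}^{n}\|^2,\|\nabla \tilde e_{\phi}^{n}\|^2,\|\tilde e_{\mu}^{n}\|^2,\|\nabla \tilde e_{\mu}^{n}\|^2,\|\nabla \tilde e_{\textbf{u}}^{n}\|^2$ and $O((\Delta t)^2)$; converting $e_{\textbf{u}}^{n},e_{\phi}^{n},e_{\mu}^{n}$ into the tilde errors via \eqref{e_model_semi5}, \eqref{e_model_semi7} and $|1-\eta^{n}|^2=|1-\xi^{n}|^4\le C_0^4(\Delta t)^4$ (cf. \eqref{e_relation_u_tilde}–\eqref{e_relation_phi_tilde}) produces the $C_0^4(\Delta t)^5$ remainders with coefficients $\|\nabla \tilde{\textbf{u}}^{n}\|^2,\|\Delta \tilde{\textbf{u}}^{n}\|^2,\|\tilde\phi^{n}\|_{H^2}^2,\|\nabla \tilde\mu^{n}\|^2$. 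Multiplying the resulting inequality by $2\Delta t$ and then by the fixed constant $\frac{M}{2K_3}$, where $K_3$ is the constant appearing in front of $\Delta t(\|\tilde e_{\mu}^{n}\|^2+\|\nabla \tilde e_{\mu}^{n}\|^2)$ in that inequality (so that after the scaling this coefficient becomes exactly $\frac{M}{2}$, which later lets it be absorbed by the $M\Delta t\|\tilde e_{\mu}^{k+1}\|^2+M\Delta t\|\nabla \tilde e_{\mu}^{k+1}\|^2$ of Lemma~\ref{lem: error_estimate_phi} after summation), gives exactly \eqref{e_error_estimate_u}.

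\emph{Main obstacle.} The delicate point is the consistent‑splitting pressure: one must trade $\nabla e_p^{n}$ for the Stokes‑pressure commutator and then balance the viscous budget, keeping the coefficient $1-\tfrac{3(1-\alpha)}{8}$ of $\|\Delta \tilde e_{\textbf{u}}^{n+1}\|^2$ strictly positive and strictly larger than the coefficient $\alpha+\tfrac{1-\alpha}{8}$ of the lagged $\|\Delta \tilde e_{\textbf{u}}^{n}\|^2$ (which holds for every $\alpha<1$, hence for $\alpha=\tfrac12+\varepsilon$ with $\varepsilon$ small), while still leaving enough of $\nu\|\Delta \tilde e_{\textbf{u}}^{n+1}\|^2$ to absorb the $\|\Delta \tilde e_{\textbf{u}}^{n+1}\|$‑paired contributions of the convection term, the phase‑coupling term and the time–lag remainder. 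This tight bookkeeping of constants relies on the uniform $\textbf{H}^2$ bound for $\textbf{u}^{n}$, hence on the bootstrap hypotheses \eqref{e_error_estimate3}–\eqref{e_error_estimate4}, and on carefully tracking the dependence on $C_0$ so that the final right‑hand side is of the stated form.
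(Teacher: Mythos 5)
Your proposal is correct and follows essentially the same route as the paper: form the velocity error equation, test with $-\Delta \tilde{e}_{\textbf{u}}^{n+1}$, trade the consistent-splitting pressure error for the Stokes-pressure commutator via Lemma~\ref{lem: commutator} with $\alpha=\tfrac12+\varepsilon$, split the nonlinear differences and convert untilded to tilded errors through $|1-\eta^{n}|^2\le C_0^4(\Delta t)^4$, and finally scale by $\tfrac{M}{2K_3}$ so the $\tilde{e}_{\mu}^{n}$ terms can later be absorbed by Lemma~\ref{lem: error_estimate_phi}. The only cosmetic difference is that the paper bounds $\|\nabla e_p^{n}\|$ directly from the variational pressure equation \eqref{e_velocity3} rather than formally eliminating the pressure by Leray projection, which amounts to the same estimate.
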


\begin{proof}
Let $\textbf{R}_{\textbf{u}}^{n+1}$ be the truncation error defined by  
\begin{equation}\label{e_velocity1}
\aligned
\textbf{R}_{\textbf{u}}^{n+1}=\frac{\partial \textbf{u}(t^{n+1})}{\partial t}- \frac{\textbf{u}(t^{n+1})-\textbf{u}(t^{n})}{\Delta t}=\frac{1}{\Delta t}\int_{t^n}^{t^{n+1}}(t^n-t)\frac{\partial^2 \textbf{u}}{\partial t^2}dt.
\endaligned
\end{equation}
Subtracting \eqref{e_model_rC} at $t^{n+1}$ from \eqref{e_model_semi3}, we obtain
\begin{equation}\label{e_velocity2}
\aligned
\frac{ \tilde{e}_{\textbf{u}}^{n+1} - \tilde{e}_{\textbf{u}}^n}{\Delta t} - & \nu\Delta\tilde{e}_{\textbf{u}}^{n+1}
=   (\textbf{u}(t^{n+1})\cdot \nabla)\textbf{u}(t^{n+1}) -  \textbf{u}^{n}\cdot \nabla\textbf{u}^{n} \\
& - \nabla (p^n-p(t^{n+1}))  + \mu ^{n} \nabla \phi^{n} - \mu(t^{n+1}) \nabla \phi(t^{n+1} ) 
+\textbf{R}_{\textbf{u}}^{n+1}.
\endaligned
\end{equation}

Next we establish an error equation for pressure corresponding to \eqref{e_model_semi6} by
\begin{equation}\label{e_velocity3}
\aligned
(\nabla e_p^{n+1}, \nabla q) = & \left(  ( \textbf{u}(t^{n+1} ) \cdot \nabla ) \textbf{u}( t^{n+1} ) - ( \textbf{u}^{n+1} \cdot \nabla ) \textbf{u}^{n+1} , \nabla q \right) \\
& + \left(   \mu^{n+1} \nabla \phi^{n+1} -  \mu( t^{n+1} ) \nabla \phi ( t^{n+1} ), \nabla q \right) \\
& - ( \nu \nabla \times \nabla \times \tilde{e}_\textbf{u} ^{n+1}, \nabla q), \ \forall q\in H^1(\Omega).
\endaligned
\end{equation}
Taking $ q = e_p^{n+1} $ in \eqref{e_velocity3} leads to
\begin{equation}\label{e_velocity4}
\aligned
 \| \nabla e_p^{n+1} \| \leq &  \nu \|  \nabla e_{ps}^{n+1}( \tilde{e}_\textbf{u} ) \|+  \| e_{ \textbf{u} }^{n+1} \|_{L^6} \| \nabla \textbf{u}(t^{n+1} ) \|_{L^3} + \|  \textbf{u}^{n+1} \|_{L^6} \| \nabla e_{ \textbf{u} }^{n+1}\|_{L^3} \\ 
& + \|  e_{ \mu }^{n+1} \|_{L^6} \| \nabla  \phi ^{n+1} \|_{L^3} + \| \mu(t^{n+1} ) \| _{L^6} \| \nabla e_{ \phi}^{n+1} \|_{L^3} .
\endaligned
\end{equation} 
Recalling \eqref{e_error_stokespre7} and lemma \ref{lem: commutator}, we have
\begin{equation}\label{e_velocity5}
\aligned
\nu \|  \nabla e_{ps}^{n+1}( \tilde{e}_\textbf{u} ) \|^2 
\leq & \nu \alpha \| \Delta \tilde{e}_\textbf{u} ^{n+1} \|^2 + \nu  C_{\alpha} \| \nabla  \tilde{e}_\textbf{u} ^{n+1} \|^2 ,
\endaligned
\end{equation} 
where the positive constant $ \frac1 2 < \alpha < 1$.

Taking the inner product of \eqref{e_velocity2} with $-2 \Delta t \Delta \tilde{e}_{\textbf{u}}^{n+1}$ and using lemmas \ref{lem: Holder inequality} and \ref{lem: Interpolation inequalities}, we have
\begin{equation}\label{e_velocity6}
\aligned
& ( \| \nabla \tilde{e}_{\textbf{u}}^{n+1} \|^2 - \| \nabla \tilde{e}_{\textbf{u}}^{n} \|^2 + \| \nabla \tilde{e}_{\textbf{u}}^{n+1}- \nabla \tilde{e}_{\textbf{u}}^{n} \|^2) + 2 \nu \Delta t \| \Delta \tilde{e}_{\textbf{u}}^{n+1} \|^2 \\
\leq &  2\Delta t \| \Delta \tilde{e}_{\textbf{u}}^{n+1}  \| \| \nabla e_p^n \|  + 2\Delta t \| \Delta \tilde{e}_{\textbf{u}}^{n+1}  \|  (  
\| e_{ \textbf{u} }^{n} \|_{L^6} \| \nabla \textbf{u}(t^{n} ) \|_{L^3} + \|  \textbf{u}^{n} \|_{L^6} \| \nabla e_{ \textbf{u} }^{n}\|_{L^3} ) \\
& +  2 \Delta t \| \Delta \tilde{e}_{\textbf{u}}^{n+1}  \|  \|p(t^n)-p(t^{n+1}) \| +  2 \Delta t \| \Delta \tilde{e}_{\textbf{u}}^{n+1}  \|  ( 
 \| e_{ \mu }^{n} \|_{L^6} \| \nabla  \phi ^{n} \|_{L^3} + \| \mu(t^{n} ) \| _{L^6} \| \nabla e_{ \phi}^{n} \|_{L^3} ) \\
& + 2 \Delta t \| \Delta \tilde{e}_{\textbf{u}}^{n+1}  \|  (\| \textbf{R}_{\textbf{u}}^{n+1} \|+ \| (\textbf{u}(t^{n+1}) \cdot \nabla)\textbf{u}(t^{n+1}) - ( \textbf{u}(t^{n})\cdot \nabla)\textbf{u}(t^{n})  \|  ) \\
& + 2 \Delta t \| \Delta \tilde{e}_{\textbf{u}}^{n+1}  \|  \| \mu(t^{n+1}) \nabla \phi(t^{n+1} )  - \mu(t^{n}) \nabla \phi(t^{n} )  \|   \\
\leq & 2 \nu \Delta t \| \Delta \tilde{e}_{\textbf{u}}^{n+1}  \| \|  \nabla e_{ps}^{n}( \tilde{e}_\textbf{u} ) \| +  
4 \Delta t \| \Delta \tilde{e}_{\textbf{u}}^{n+1}  \|  (  
\| e_{ \textbf{u} }^{n} \|_{L^6} \| \nabla \textbf{u}(t^{n} ) \|_{L^3} + \|  \textbf{u}^{n} \|_{L^6} \| \nabla e_{ \textbf{u} }^{n}\|_{L^3} )  \\
 & + 4 \Delta t \| \Delta \tilde{e}_{\textbf{u}}^{n+1}  \|  (  \|  e_{ \mu }^{n} \|_{L^6} \| \nabla  \phi ^{n} \|_{L^3} + \| \mu(t^{n} ) \| _{L^6} \| \nabla e_{ \phi}^{n} \|_{L^3} ) \\
& +  2 \Delta t \| \Delta \tilde{e}_{\textbf{u}}^{n+1}  \| ( \|p(t^n)-p(t^{n+1}) \|+\| \textbf{R}_{\textbf{u}}^{n+1} \|+ \| (\textbf{u}(t^{n+1})\cdot \nabla)\textbf{u}(t^{n+1}) - ( \textbf{u}(t^{n})\cdot \nabla)\textbf{u}(t^{n})  \|  ) \\
&+ 2 \Delta t \| \Delta \tilde{e}_{\textbf{u}}^{n+1}  \|  \| \mu(t^{n+1}) \nabla \phi(t^{n+1} )  - \mu(t^{n}) \nabla \phi(t^{n} )  \| .
\endaligned
\end{equation}
Using Cauchy-Schwarz inequality, the first term on the right hand side of \eqref{e_velocity6} can be estimated by
\begin{equation}\label{e_velocity7}
\aligned
& 2 \nu \Delta t \| \Delta \tilde{e}_{\textbf{u}}^{n+1}  \| \|  \nabla e_{ps}^{n}( \tilde{e}_\textbf{u} ) \| \leq \nu \Delta t  \| \Delta \tilde{e}_{\textbf{u}}^{n+1}  \|^2 +  \nu \alpha \Delta t \| \Delta \tilde{e}_\textbf{u} ^{n} \|^2+ \nu  C_{\alpha} \Delta t \| \nabla  \tilde{e}_\textbf{u} ^{n} \|^2.
\endaligned
\end{equation}
Recalling \eqref{e_error_estimate4}, we can obtain $  \| \textbf{u}^{n} \|_{H^2} \leq C $ and by using \eqref{e_model_semi5} and \eqref{e_error_estimate3}, we have
\begin{equation}\label{e_velocity8}
\aligned
  \| \Delta e_{\textbf{u}}^{n} \|^2 \leq&  2 \| \Delta \tilde{e}_{\textbf{u}}^{n} \|^2 + 2 |1- \eta^{n} |^2 \| \Delta \tilde{\textbf{u}}^{n} \|^2 \\
\leq & 2 \| \Delta \tilde{e}_{\textbf{u}}^{n} \|^2 + 2  \| \Delta \tilde{\textbf{u}}^{n} \|^2 C_0^4 (\Delta t)^4.
\endaligned
\end{equation}
Thus the second term on the right hand side of \eqref{e_velocity6} can be estimated by
\begin{equation}\label{e_velocity10}
\aligned
& 4 \Delta t \| \Delta \tilde{e}_{\textbf{u}}^{n+1}  \|  (  
\| e_{ \textbf{u} }^{n} \|_{L^6} \| \nabla \textbf{u}(t^{n} ) \|_{L^3} + \|  \textbf{u}^{n} \|_{L^6} \| \nabla e_{ \textbf{u} }^{n}\|_{L^3} ) \\
\leq &  \frac{ (1-\alpha) \nu}{8 }  \Delta t  \| \Delta \tilde{e}_{\textbf{u}}^{n+1} \|^2 + C \Delta t  \| \nabla e_{ \textbf{u} }^{n} \|^2 \| \nabla \textbf{u}(t^{n} ) \|  \| \nabla \textbf{u}(t^{n})  \|_{H^1} + C \Delta t \| \nabla \textbf{u}^{n} \|^2 \| \nabla e_{ \textbf{u} }^{n}  \|  \| \nabla e_{ \textbf{u} }^{n}  \|_{H^1}  \\
\leq & \frac{ (1-\alpha) \nu}{8 } \Delta t  \| \Delta \tilde{e}_{\textbf{u}}^{k+1} \|^2 +   \frac{ (1-\alpha) \nu}{16 } \Delta t  \| \Delta e_{\textbf{u}}^{k} \|^2 +  C \Delta t  \| \nabla e_{ \textbf{u} }^{k} \|^2 \\
\leq & \frac{ (1-\alpha) \nu}{8 } \Delta t  \| \Delta \tilde{e}_{\textbf{u}}^{n+1} \|^2 +   \frac{ (1-\alpha) \nu}{8 }  \Delta t  \| \Delta \tilde{e}_{\textbf{u}}^{n} \|^2 + C \Delta t  \| \nabla \tilde{e}_{\textbf{u}}^{n} \|^2  \\
&+ C ( \| \Delta \tilde{\textbf{u}}^{n} \|^2+ \| \nabla \tilde{\textbf{u}}^{n} \|^2 )C_0^4 (\Delta t)^5.
\endaligned
\end{equation}
Using \eqref{e_phi5}, we have
\begin{equation}\label{e_velocity11}
\aligned
 \Delta  \tilde{e} _{\phi}^{n+1} =    \gamma  \tilde{e} _{\phi}^{n+1} +  F^{\prime}( \phi^n ) -  F^{\prime}( \phi(t^{n+1}) ) -  \frac{1}{\lambda} \tilde{e} _{\mu}^{n+1} .
\endaligned
\end{equation}
Recalling \eqref{e_error_estimate4} leads to $  \| \mu^{n} \|_{H^1} \leq C $. In addition, we have
\begin{equation}\label{e_velocity12}
\aligned
  \|  e_{\phi}^{n} \|_{H^2}^2 \leq &   2 \| \tilde{e}_{\phi}^{n} \|_{H^2}^2 + 2  \|  \tilde{\phi}^{n} \|_{H^2}^2 C_0^4 (\Delta t)^4,
\endaligned
\end{equation}
and 
\begin{equation}\label{e_velocity13}
\aligned
  \|  \nabla e_{\mu}^{n} \|^2 \leq &   2 \| \nabla \tilde{e}_{\mu}^{n} \|^2 + 2  \| \nabla  \tilde{\mu}^{n} \|^2 C_0^4 (\Delta t)^4.
\endaligned
\end{equation}
Hence the third term on the right hand side of \eqref{e_velocity6} can be bounded by
\begin{equation}\label{e_velocity14}
\aligned
 & 4 \Delta t \| \Delta \tilde{e}_{\textbf{u}}^{n+1}  \|  (  \|  e_{ \mu }^{n} \|_{L^6} \| \nabla  \phi ^{n} \|_{L^3} + \| \mu(t^{n} ) \| _{L^6} \| \nabla e_{ \phi}^{n} \|_{L^3} ) \\
 \leq & 4 \Delta t \| \Delta \tilde{e}_{\textbf{u}}^{n+1}  \|  (  \|  e_{ \mu }^{n} \|_{H^1} \| \nabla  \phi ^{n} \|^{1/2} \| \nabla  \phi ^{n} \|_{H^1}^{1/2}+ \| \mu(t^{n} ) \| _{H^1} \| \nabla e_{ \phi}^{n} \|^{1/2} \| \nabla e_{ \phi}^{n} \|_{H^1}^{1/2} ) \\
 \leq & \frac{ (1-\alpha) \nu}{8 } \Delta t  \| \Delta \tilde{e}_{\textbf{u}}^{n +1} \|^2 + K_3 \Delta t  (\|  \tilde{e}_{\mu}^{n} \|^2+ \| \nabla \tilde{e}_{\mu}^{n} \|^2 )+ C \Delta t \| \tilde{e}_{\phi}^n \|^2  \\
 & + C \Delta t \| \nabla \tilde{e}_{\phi}^n \|^2 + C \|  \tilde{\phi}^{n} \|_{H^2}^2 C_0^4 (\Delta t)^5
 + C  \| \nabla  \tilde{\mu}^{n} \|^2 C_0^4 (\Delta t)^5 \\
 & + C\| \phi \|_{W^{1,\infty}(0,T; H^1(\Omega))}^2 \| \mu \|_{L^{\infty}(0,T; H^1(\Omega))}^2  (\Delta t)^3,
\endaligned
\end{equation}
where  $K_3$ is a positive constant which is independent of $\Delta t$ and $C_0$.

Using Cauchy-Schwarz inequality, the last two terms on the right hand side of \eqref{e_velocity6} can be bounded by
\begin{equation}\label{e_velocity15}
\aligned
&   2 \Delta t \| \Delta \tilde{e}_{\textbf{u}}^{n+1}  \| ( \|p(t^n)-p(t^{n+1}) \|+\| \textbf{R}_{\textbf{u}}^{n+1} \|+ \| (\textbf{u}(t^{n+1})\cdot \nabla)\textbf{u}(t^{n+1}) - ( \textbf{u}(t^{n})\cdot \nabla)\textbf{u}(t^{n})  \|  ) \\
&+ 2 \Delta t \| \Delta \tilde{e}_{\textbf{u}}^{n+1}  \|  \| \mu(t^{n+1}) \nabla \phi(t^{n+1} )  - \mu(t^{n}) \nabla \phi(t^{n} )  \| \\
\leq & \frac{ (1-\alpha) \nu}{8 }  \Delta t \| \Delta \tilde{e}_{\textbf{u}}^{n+1} \|^2 + C (\Delta t)^2 \left( \int_{t^n}^{t^{n+1}} \| p_t \|^2 dt + \int_{t^n}^{t^{n+1}}\|\textbf{u}_{tt}\|^2 dt 
\right) \\
& + C (\Delta t)^2 \left(  \int_{t^n}^{t^{n+1}}\| \nabla \textbf{u}_{t}\|^2 dt \| \textbf{u}(t^{n+1}) \|_{H^2}^2 +\| \textbf{u}(t^{n}) \|_{H^1}^2 \int_{t^n}^{t^{n+1}}\| \textbf{u}_{t}\|_{H^2}^2 dt \right) \\
& +  C (\Delta t)^2 \left(  \int_{t^n}^{t^{n+1}}\| \nabla \phi_{t}\|^2 dt \| \mu(t^{n+1}) \|_{H^2}^2 +\| \phi(t^{n}) \|_{H^1}^2 \int_{t^n}^{t^{n+1}}\| \mu_{t}\|_{H^2}^2 dt \right) .
\endaligned
\end{equation}
 Finally,  combining \eqref{e_velocity6} with \eqref{e_velocity7}-\eqref{e_velocity15}, we obtain
\begin{equation}\label{e_velocity16}
\aligned
& ( \| \nabla \tilde{e}_{\textbf{u}}^{n+1} \|^2 - \| \nabla \tilde{e}_{\textbf{u}}^{n} \|^2 + \| \nabla \tilde{e}_{\textbf{u}}^{n+1}- \nabla \tilde{e}_{\textbf{u}}^{n} \|^2) + (1- \frac{3  (1-\alpha) }{8 }  ) \nu \Delta t \| \Delta \tilde{e}_{\textbf{u}}^{n+1} \|^2 \\
\leq & (  \alpha +  \frac{ (1-\alpha) }{8 } ) \nu \Delta t \| \Delta \tilde{e}_\textbf{u} ^{n} \|^2+  C \Delta t \| \nabla  \tilde{e}_\textbf{u} ^{n} \|^2 + C \Delta t \| \tilde{e}_{\phi}^n \|^2 + C \Delta t \| \nabla \tilde{e}_{\phi}^n \|^2 \\
&  + K_3 \Delta t  (\|  \tilde{e}_{\mu}^{n} \|^2+ \| \nabla \tilde{e}_{\mu}^{n} \|^2 )  + C ( \| \Delta \tilde{\textbf{u}}^{n} \|^2+ \| \nabla \tilde{\textbf{u}}^{n} \|^2 )C_0^4 (\Delta t)^5 \\
 &  + C \|  \tilde{\phi}^{n} \|_{H^2}^2 C_0^4 (\Delta t)^5
 + C  \| \nabla  \tilde{\mu}^{n} \|^2 C_0^4 (\Delta t)^5 \\
& + C (  \| p \|_{W^{1,\infty}(0,T; L^2(\Omega)) }^2 + \| \textbf{u} \|_{W^{2,\infty}(0,T; L^2(\Omega)) }^2 ) (\Delta t)^3 \\
& + C   \| \textbf{u} \|_{W^{1,\infty}(0,T; H^1(\Omega)) }^2 \| \textbf{u} \|_{L^{\infty}(0,T; H^2(\Omega)) }^2 (\Delta t)^3 \\
& + C   \| \textbf{u} \|_{L^{\infty}(0,T; H^1(\Omega)) }^2 \| \textbf{u} \|_{W^{1,\infty}(0,T; H^2(\Omega)) }^2 (\Delta t)^3 \\
& +  C \| \phi \|_{W^{1,\infty}(0,T; H^1(\Omega))}^2 \| \mu \|_{L^{\infty}(0,T; H^2(\Omega))}^2  (\Delta t)^3 \\
 & +  C \| \phi \|_{L^{\infty}(0,T; H^1(\Omega))}^2 \| \mu \|_{W^{1,\infty}(0,T; H^2(\Omega))}^2  (\Delta t)^3,
\endaligned
\end{equation}
which leads to the desired result \eqref{e_error_estimate_u} by multiplying $ \frac{ M}{ 2 K_3 }$ on both sides of \eqref{e_velocity16}.

\end{proof}

Combining lemmas \ref{lem: error_estimate_phi} and \ref{lem: error_estimate_u}, we have 
\begin{equation} \label{e_error_final_utilde1}
\aligned
&  \lambda (
 \| \nabla \tilde{e} _{\phi}^{k+1} \|^2 -  \| \nabla \tilde{e} _{\phi}^{k} \|^2 +  \| \nabla \tilde{e} _{\phi}^{k+1}- \nabla \tilde{e} _{\phi}^{k} \|^2 ) + M \Delta t  \| \nabla \tilde{e} _{\mu}^{k+1} \|^2  \\
 &  +  \lambda \gamma ( \| \tilde{e} _{\phi}^{k+1}\|^2- \| \tilde{e} _{\phi}^{k}\|^2 +\| \tilde{e} _{\phi}^{k+1}- \tilde{e} _{\phi}^{k} \|^2 )  + M \Delta t   \|  \tilde{e} _{\mu}^{k+1} \|^2 \\
 & + \frac{M}{ 2 K_3} ( \| \nabla \tilde{e}_{\textbf{u}}^{k+1} \|^2 - \| \nabla \tilde{e}_{\textbf{u}}^{k} \|^2 + \| \nabla \tilde{e}_{\textbf{u}}^{k+1}- \nabla \tilde{e}_{\textbf{u}}^{k} \|^2) \\
& +\frac{M}{ 2 K_3} (1- \frac{3  (1-\alpha) }{8 }  ) \nu \Delta t \| \Delta \tilde{e}_{\textbf{u}}^{k+1} \|^2 \\
 \leq &  C_1 \Delta t \| \nabla \tilde{e} _{\phi}^{k+1} \|^2  + C\Delta t  \| \tilde{e} _{\phi}^{k} \|^2 + C\Delta t  \| \nabla \tilde{e}_{\phi}^{k} \|^2 + C\Delta t   \| \nabla \tilde{e}_{\textbf{u}}^{k} \|^2  \\
&    + C  ( \| \nabla \tilde{\textbf{u}}^{k} \|^2 + \|  \tilde{\phi}^{k} \|_{H^1}^2 ) 
C_0^4 (\Delta t)^5 + C (\Delta t)^3 \\
 & + \frac{M}{ 2 K_3} (  \alpha +  \frac{ (1-\alpha) }{8 } ) \nu \Delta t \| \Delta \tilde{e}_\textbf{u} ^{k} \|^2+  C \Delta t \| \nabla  \tilde{e}_\textbf{u} ^{k} \|^2 \\
& + C \Delta t \| \tilde{e}_{\phi}^k \|^2 + C \Delta t \| \nabla \tilde{e}_{\phi}^k \|^2  + \frac{M}{  2} \Delta t  (\|  \tilde{e}_{\mu}^{k} \|^2+ \| \nabla \tilde{e}_{\mu}^{k} \|^2 ) \\
&   + C ( \| \Delta \tilde{\textbf{u}}^{k} \|^2+ \| \nabla \tilde{\textbf{u}}^{k} \|^2 )C_0^4 (\Delta t)^5 \\
 &  + C \|  \tilde{\phi}^{k} \|_{H^2}^2 C_0^4 (\Delta t)^5
 + C  \| \nabla  \tilde{\mu}^{k} \|^2 C_0^4 (\Delta t)^5  + C  (\Delta t)^3.
\endaligned
\end{equation}
Summing \eqref{e_error_final_utilde1} over $k$, $k=0,1,2,\ldots,n$, using boundedness estimates \eqref{e_error_estimate4}, \eqref{e_error1}, lemma \ref{lem_phi_H2_boundness} and applying the discrete Gronwall lemma \ref{lem: gronwall1} under the condition that $ \Delta t \leq \frac{ \lambda }{ 2 C_1 } $, we can arrive at
\begin{equation}\label{e_error_final_utilde2}
\aligned
& \| \tilde{e} _{\phi}^{n+1} \|^2 +  \| \nabla \tilde{e} _{\phi}^{n+1} \|^2 +  \Delta t \sum\limits_{k=0}^{n} \| \tilde{e}_{\mu}^{k+1} \|^2 \\
&+ \Delta t \sum\limits_{k=0}^{n} \| \nabla \tilde{e}_{\mu}^{k+1} \|^2 
+ \|  \nabla \tilde{e}_{\textbf{u}}^{n+1} \|^2  + \Delta t \sum\limits_{k=0}^{n} \| \Delta \tilde{e}_{\textbf{u}}^{k+1} \|^2  \\
\leq & C_2 \left(1+C_0^4 (\Delta t)^2 \right) (\Delta t)^2, \ \forall n\leq T/\Delta t ,
\endaligned
\end{equation}  
where $C_2$ is independent of $C_0$ and $\Delta t$.

Next we finish the induction process by establishing the estimates for $| 1- \xi^{n+1} |$.  Let $S_{ r }^{k+1}$ be the truncation error defined by
\begin{equation}\label{e_error_R_1}
\aligned
S_{ r }^{k+1}=\frac{\partial r (t^{k+1})}{\partial t}- \frac{ R (t^{k+1})-R(t^{k})}{\Delta t}=\frac{1}{\Delta t}\int_{t^k }^{t^{k+1}}(t^k-t)\frac{\partial^2 r }{\partial t^2}dt.
\endaligned
\end{equation}
Subtracting \eqref{e_model_r_Modify} at $t^{k+1}$ from \eqref{e_model_semi4}, we obtain
\begin{equation}\label{e_error_R_2}
\aligned
\frac{ \tilde{e} _{R}^{k+1} -  e_{R}^{k} }{ \Delta t } = & - \frac{ \tilde{R}^{n+1} }{ E( \tilde{ \phi} ^{k+1} , \tilde{\textbf{u}}^{k+1} ) +\kappa_0 }
\left(   M \| \nabla \tilde{ \mu}^{k+1} \|^2 + \nu \|\nabla  \textbf{u}^k \|^2 \right) \\
& + \frac{ r( t^{n+1} ) }{ E( \phi (t ^{n+1} ), \textbf{u}(t^{n+1} ) ) +\kappa_0 }
\left(   M \| \nabla \mu( t^{n+1} ) \|^2 + \nu \|\nabla  \textbf{u}(t^{n+1} )  \|^2 \right) + S_{ r }^{k+1} .
\endaligned
\end{equation}
Thus we can obtain an error equation corresponding to \eqref{e_lem_relation_final} 
\begin{equation}\label{e_error_R_4}
\aligned
e_{R}^k = \sigma^k \tilde{e}_{R}^k + ( 1- \sigma^k ) \left( E( \phi ^{k} , \textbf{u}^{k} ) -  E( \phi (t^{k} ), \textbf{u}( t^{k} ) ) \right) .
\endaligned
\end{equation} 
Plugging \eqref{e_error_R_4} into \eqref{e_error_R_2} leads to
\begin{equation}\label{e_error_R_5}
\aligned
\tilde{e} _{R}^{k+1} - \sigma^{k} \tilde{e}_{R}^k = &  ( 1- \sigma^k ) \left( E( \phi ^{k} , \textbf{u}^{k} ) -  E( \phi (t^{k} ), \textbf{u}( t^{k} ) ) \right) \\
& - \Delta t  \frac{ \tilde{R}^{k+1} }{ E( \tilde{ \phi} ^{k+1} , \tilde{\textbf{u}}^{k+1} ) +\kappa_0 }
\left(   M \| \nabla \tilde{ \mu}^{k+1} \|^2 + \nu \|\nabla  \textbf{u}^k \|^2 \right) \\
& + \Delta t  \frac{ r( t^{k+1} ) }{ E( \phi (t ^{k+1} ), \textbf{u}(t^{k+1} ) ) +\kappa_0 }
\left(   M \| \nabla \mu( t^{k+1} ) \|^2 + \nu \|\nabla  \textbf{u}(t^{k+1} )  \|^2 \right) \\
&  + \Delta t  S_{ r }^{k+1} .
\endaligned
\end{equation} 
Next we continue the error estimates in the following two cases $R^{k-1} \geq E( \phi ^{k} , \textbf{u}^{k} ) +\kappa_0$ and $R^{k-1} < E( \phi ^{k} , \textbf{u}^{k} ) +\kappa_0$.

\textbf{Case I: } Using Lemma \ref{lem: relation_R and tildeR}, we have $ \sigma^{k} =0 $ under the condition that $R^{k-1} \geq E( \phi ^{k} , \textbf{u}^{k} ) +\kappa_0$, thus multiplying \eqref{e_error_R_5} with $ \tilde{e} _{R}^{k+1} $ and 
 using the triangle inequality, we have
\begin{equation}\label{e_error_R_6}
\aligned
&  | \tilde{e} _{R}^{k+1} |^2  
=  \left( E( \phi ^{k} , \textbf{u}^{k} ) -  E( \phi (t^{k} ), \textbf{u}( t^{k} ) ),  \tilde{e} _{R}^{k+1}  \right)  +  \Delta t (  S_{ r }^{k+1}  ,\tilde{e} _{R}^{k+1} )  + H( \tilde{R}^{k+1}, \tilde{ \mu}^{k+1},   \textbf{u}^k ),
\endaligned
\end{equation} 
where 
\begin{equation*}
\aligned
 H( \tilde{R}^{k+1},  \tilde{ \mu}^{k+1},  \textbf{u}^k ) = & \Delta t \left( \frac{ \tilde{R}^{k+1} }{ E( \tilde{ \phi} ^{k+1} , \tilde{\textbf{u}}^{k+1} ) +\kappa_0 }
\left(   M \| \nabla \tilde{ \mu}^{k+1} \|^2 + \nu \| \nabla  \textbf{u}^k \|^2 \right)  \right. \\ 
& \left. - \left(   M \| \nabla \mu( t^{k+1} ) \|^2 + \nu \|\nabla  \textbf{u}(t^{k+1} )  \|^2 \right),  \tilde{e} _{R}^{k+1}  \right) . 
\endaligned
\end{equation*} 

Using \eqref{e_error1} and \eqref{e_error_final_utilde2}, we have
\begin{equation}\label{e_error_R_7}
\aligned
& | E( \phi ^{k} , \textbf{u}^{k} ) -  E( \phi (t^{k} ), \textbf{u}( t^{k} ) )  | \\
\leq & | E( \phi ^{k} , \textbf{u}^{k} ) -  E(  \tilde{\phi} ^{k} , \tilde{ \textbf{u} } ^{k})  | + |   E(  \tilde{\phi} ^{k} , \tilde{ \textbf{u} } ^{k}) -  E( \phi (t^{k} ), \textbf{u}( t^{k} ) )  | \\
\leq & C K_1 ( \| \nabla \phi^{k} - \nabla \tilde{\phi} ^{k} \| + \| \phi^{k} -  \tilde{\phi} ^{k} \| + \| \textbf{u}^{k} - \tilde{ \textbf{u} } ^{k} \| ) \\
& + C  ( \|  \nabla \tilde{\phi} ^{k} - \nabla  \phi (t^{k} ) \| + \|  \tilde{\phi} ^{k} -  \phi (t^{k} ) \|
 + \| \tilde{ \textbf{u} } ^{k} -  \textbf{u} (t^{k} ) \| ) \\
& + \lambda \int_{\Omega} | F( \phi ^{k} ) - F( \phi (t^{k} ) ) | d \textbf{x} \\
\leq & C | 1- \eta_1^k | ( \| \tilde{\phi} ^{k} \|_{H^1} + \| \tilde{ \textbf{u} } ^{k} \| ) + C \Delta t \\
\leq & C  ( \| \tilde{\phi} ^{k} \|_{H^1} + \| \tilde{ \textbf{u} } ^{k} \| )  C_0^2 (\Delta t)^2 + C \Delta t .
\endaligned
\end{equation} 
Then the first term on the right hand side of \eqref{e_error_R_6} can be estimated as 
\begin{equation}\label{e_error_R_8}
\aligned
& \left( E( \phi ^{k} , \textbf{u}^{k} ) -  E( \phi (t^{k} ), \textbf{u}( t^{k} ) ),  \tilde{e} _{R}^{k+1}  \right) \\
& \ \ \ \ \ \ \ \ \ \ \ 
\leq  C  ( \| \tilde{\phi} ^{k} \|_{H^1} ^2 + \| \tilde{ \textbf{u} } ^{k} \|^2 )  C_0^4 (\Delta t)^4 + C ( \Delta t )^2 + \frac{ 1 }{ 4 } | \tilde{e} _{R}^{k+1} |^2  .
\endaligned
\end{equation} 
Since
  \begin{equation}\label{e_error_R_9}
\aligned
&   \Delta t | \frac{ \tilde{R}^{k+1} }{ E( \tilde{ \phi} ^{k+1} , \tilde{\textbf{u}}^{k+1} ) +\kappa_0 }
\left(   M \| \nabla \tilde{ \mu}^{k+1} \|^2 + \nu \| \nabla  \textbf{u}^k \|^2 \right) 
 - \left(   M \| \nabla \mu( t^{k+1} ) \|^2 + \nu \|\nabla  \textbf{u}(t^{k+1} )  \|^2 \right) | \\
=	 &  \Delta t  \left|  \frac{ \tilde{R}^{k+1} }{ E( \tilde{ \phi} ^{k+1} , \tilde{\textbf{u}}^{k+1} ) +\kappa_0 } - 
\frac{ r( t^{k+1} ) }{ E( \phi (t ^{k+1} ), \textbf{u}(t^{k+1} ) ) +\kappa_0 } \right|  \left(   M \| \nabla \mu( t^{k+1} ) \|^2 + \nu \| \nabla   \textbf{u}(t^{k+1} ) \|^2 \right) \\
& +  M  \Delta t  \frac{ \tilde{R}^{k+1} }{ E( \tilde{ \phi} ^{k+1} , \tilde{\textbf{u}}^{k+1} ) +\kappa_0 } ( \| \nabla \tilde{ \mu}^{k+1} \|^2 - \| \nabla \mu( t^{k+1} ) \|^2 ) \\
& + \nu \Delta t   \frac{ \tilde{R}^{k+1} }{ E( \tilde{ \phi} ^{k+1} , \tilde{\textbf{u}}^{k+1} ) +\kappa_0 } ( \| \nabla  \textbf{u}^k \|^2 -  \|\nabla  \textbf{u}(t^{k+1} )  \|^2 ) \\
\leq & C  \Delta t | \tilde{e}_R^{k+1} | + C \Delta t ( \| \nabla \tilde{ \mu}^{k+1} \| +1 ) \|  \nabla \tilde{e}_{\mu}^{k+1} \|  \\
& + C  \Delta t \| \tilde{e}_{ \textbf{u}}^{k} \| + C \Delta t.
\endaligned
\end{equation} 
Thus using Cauchy-Schwarz inequality, the last term on the right hand side of \eqref{e_error_R_6} can be bounded by
  \begin{equation}\label{e_error_R_10}
\aligned
  H( \tilde{R}^{k+1},  & \tilde{ \mu}^{k+1},   \textbf{u}^k ) \\
\leq &  C  \Delta t | \tilde{e}_R^{k+1} |^2 + \frac{1}{4 K_2 } \Delta t  \| \nabla \tilde{ \mu}^{k+1} \|^2 | \tilde{e}_R^{k+1} |^2\\
&  + C \Delta t \|  \nabla \tilde{e}_{\mu}^{k+1} \|^2 + C  \Delta t \| \tilde{e}_{ \textbf{u}}^{k} \|^2 + C (\Delta t)^2 .
\endaligned
\end{equation} 
Combining \eqref{e_error_R_6} with \eqref{e_error_R_7}-\eqref{e_error_R_10}, we have
\begin{equation}\label{e_error_R_11}
\aligned
& | \tilde{e} _{R}^{k+1} |^2  
\leq  C  ( \| \tilde{\phi} ^{k} \|_{H^1} ^2 + \| \tilde{ \textbf{u} } ^{k} \|^2 )  C_0^4 (\Delta t)^4  + \frac{ 1 }{ 4 } | \tilde{e} _{R}^{k+1} |^2 \\
& + C  \Delta t | \tilde{e}_R^{k+1} |^2 +  \frac{1}{4 K_2 } \Delta t  \| \nabla \tilde{ \mu}^{k+1} \|^2 | \tilde{e}_R^{k+1} |^2\\
&  + C \Delta t \|  \nabla \tilde{e}_{\mu}^{k+1} \|^2 + C  \Delta t \| \tilde{e}_{ \textbf{u}}^{k} \|^2 + C (\Delta t)^2 .
\endaligned
\end{equation} 
By substituting $n$ for $k$ and adding some positive terms on the right hand side of \eqref{e_error_R_11}, we get
\begin{equation}\label{e_error_R_12}
\aligned
& | \tilde{e} _{R}^{n+1} |^2  
\leq  C  ( \| \tilde{\phi} ^{n} \|_{H^1} ^2 + \| \tilde{ \textbf{u} } ^{n} \|^2 )  C_0^4 (\Delta t)^4  + \frac{ 1 }{ 4 } | \tilde{e} _{R}^{n+1} |^2 \\
& + C_3  \Delta t \sum\limits_{k=0}^{n} | \tilde{e}_R^{k+1} |^2 +  \frac{1}{4 K_2 } | \tilde{e}_R^{n+1} |^2 \Delta t \sum\limits_{k=0}^{n} \| \nabla \tilde{ \mu}^{k+1} \|^2  \\
&  + C \Delta t \sum\limits_{k=0}^{n} \|   \nabla \tilde{e}_{\mu}^{k+1} \|^2 + C  \Delta t \sum\limits_{k=0}^{n} \| \tilde{e}_{ \textbf{u}}^{k} \|^2 + C (\Delta t)^2 .
\endaligned
\end{equation} 

\textbf{Case II: } In the following as specified in \eqref{e_error_sigmak}, we set $ \sigma^{k} = 1 - \frac{  \tilde{R}^{k} \left(   M \| \nabla \tilde{ \mu}^{k} \|^2 + \nu \|\nabla  \textbf{u}^{k-1} \|^2 \right) } {  (E( \tilde{ \phi} ^{k} , \tilde{\textbf{u}}^{k} ) +\kappa_0 ) ( E( \phi ^{k} , \textbf{u}^{k} ) +\kappa_0 -  \tilde{R}^{k} )  }  ( \Delta t )^2 $  under the condition that $R^{k-1} < E( \phi ^{k} , \textbf{u}^{k} ) +\kappa_0$, thus using \eqref{e_error_R_5}, we have
\begin{equation}\label{e_error_R_14}
\aligned
\tilde{e} _{R}^{k+1} -  \tilde{e}_{R}^k = &  - \Delta t  \frac{ \tilde{R}^{k+1} }{ E( \tilde{ \phi} ^{k+1} , \tilde{\textbf{u}}^{k+1} ) +\kappa_0 }
\left(   M \| \nabla \tilde{ \mu}^{k+1} \|^2 + \nu \|\nabla  \textbf{u}^k \|^2 \right) \\
& + \Delta t  \frac{ r( t^{k+1} ) }{ E( \phi (t ^{k+1} ), \textbf{u}(t^{k+1} ) ) +\kappa_0 }
\left(   M \| \nabla \mu( t^{k+1} ) \|^2 + \nu \|\nabla  \textbf{u}(t^{k+1} )  \|^2 \right) \\
&  + \Delta t  S_{ r }^{k+1} + \frac{  \tilde{R}^{k} \left(   M \| \nabla \tilde{ \mu}^{k} \|^2 + \nu \|\nabla  \textbf{u}^{k-1} \|^2 \right) } {  E( \tilde{ \phi} ^{k} , \tilde{\textbf{u}}^{k} ) +\kappa_0    }  ( \Delta t )^2  .
\endaligned
\end{equation} 
Multiplying \eqref{e_error_R_14} with $ \tilde{e} _{R}^{k+1} $ and using the triangle inequality, we have
\begin{equation}\label{e_error_R_15}
\aligned
& \frac{1}{2} | \tilde{e} _{R}^{k+1} |^2  - \frac{1}{2} | \tilde{e} _{R}^{k} |^2 + \frac{1}{2} |  \tilde{e} _{R}^{k+1}- \tilde{e} _{R}^{k} |^2 \\
= &  \Delta t \left(  \frac{  \tilde{R}^{k} \left(   M \| \nabla \tilde{ \mu}^{k} \|^2 + \nu \|\nabla  \textbf{u}^{k-1} \|^2 \right) } {  E( \tilde{ \phi} ^{k} , \tilde{\textbf{u}}^{k} ) +\kappa_0    }   \Delta t  ,  \tilde{e} _{R}^{k+1}  \right)  +  \Delta t (  S_{ r }^{k+1}  ,\tilde{e} _{R}^{k+1} )  \\
& + H( \tilde{R}^{k+1}, \tilde{ \mu}^{k+1},   \textbf{u}^k ),
\endaligned
\end{equation} 
Taking the sum of \eqref{e_error_R_15} from $q+1$ to $n$ with $ \sigma^{q} =0$ and using \eqref{e_error_R_6} result in
\begin{equation}\label{e_error_R_16}
\aligned
& \frac{1}{2} | \tilde{e} _{R}^{n+1} |^2 \leq  \Delta t  \sum\limits_{k=q+1}^{n} \left(  \frac{  \tilde{R}^{k} \left(   M \| \nabla \tilde{ \mu}^{k} \|^2 + \nu \|\nabla  \textbf{u}^{k-1} \|^2 \right) } {  E( \tilde{ \phi} ^{k} , \tilde{\textbf{u}}^{k} ) +\kappa_0    }   \Delta t  ,  \tilde{e} _{R}^{k+1}  \right) \\
& +  \frac{1}{2} \left( E( \phi ^{q} , \textbf{u}^{q} ) -  E( \phi (t^{q} ), \textbf{u}( t^{q} ) ),  \tilde{e} _{R}^{q+1}  \right)   \\
 & +  \frac{1}{2} \Delta t (  S_{ r }^{q+1}  ,\tilde{e} _{R}^{q+1} ) +  \Delta t  \sum\limits_{k=q+1}^{n} (  S_{ r }^{k+1}  ,\tilde{e} _{R}^{k+1} )  \\
&  + \frac{1}{2}  H( \tilde{R}^{q+1}, \tilde{ \mu}^{q+1},   \textbf{u}^q ) +  \sum\limits_{k=q+1}^{n} H( \tilde{R}^{k+1}, \tilde{ \mu}^{k+1},   \textbf{u}^k ) .
\endaligned
\end{equation} 
Using \eqref{e_error1} and lemma \ref{lem_phi_H2_boundness}, the first term on the right hand side of \eqref{e_error_R_16} can be estimated as
\begin{equation}\label{e_error_R_17}
\aligned
&  \Delta t  \sum\limits_{k=q+1}^{n} \left(  \frac{  \tilde{R}^{k} \left(   M \| \nabla \tilde{ \mu}^{k} \|^2 + \nu \|\nabla  \textbf{u}^{k-1} \|^2 \right) } {  E( \tilde{ \phi} ^{k} , \tilde{\textbf{u}}^{k} ) +\kappa_0    }   \Delta t  ,  \tilde{e} _{R}^{k+1}  \right) \\
\leq & C \Delta t \sum\limits_{k=q+1}^{n} | \tilde{e} _{R}^{k+1} |^2 +C( K_1 + K_2 ) ( \Delta t )^2 .
\endaligned
\end{equation} 
The other terms on the right hand side of \eqref{e_error_R_16} can be estimated by using exactly the same procedure as above in Case I , we can obtain that 
\begin{equation}\label{e_error_R_18}
\aligned
 \frac{1}{2} | \tilde{e} _{R}^{n+1} |^2
 \leq & C_4 \Delta t \sum\limits_{k=q+1}^{n} | \tilde{e} _{R}^{k+1} |^2 + C \Delta t \sum\limits_{k=0}^{n} \|   \nabla \tilde{e}_{\mu}^{k+1} \|^2 \\
&+   C  \Delta t \sum\limits_{k=0}^{n} \| \tilde{e}_{ \textbf{u}}^{k} \|^2 + C  ( \| \tilde{\phi} ^{q} \|_{H^1} ^2 + \| \tilde{ \textbf{u} } ^{q} \|^2 )  C_0^4 (\Delta t)^4 + C (\Delta t)^2 .
\endaligned
\end{equation} 
Applying the discrete Gronwall lemma \ref{lem: gronwall1} under the condition that $ \Delta t \leq \min\{ \frac{ 1 }{ 4 C_3 } , \frac{ 1 }{ 4 C_4 } \} $ and using \eqref{e_error_final_utilde2}, we can arrive at 
\begin{equation}\label{e_error_R_19}
\aligned
& | \tilde{e} _{R}^{n+1} |^2  
\leq  C_5 \left(1+C_0^4 (\Delta t)^2 \right) (\Delta t)^2, \ \forall n\leq T/\Delta t .
\endaligned
\end{equation} 
 Next we  finish the induction process as follows. Recalling \eqref{e_model_semi4}, we have
\begin{equation}\label{e_error_R_20}
\aligned
 | 1- \xi^{n+1} | =  & | \frac{R(t^{n+1} )}{ E(\phi(t^{n+1}), \textbf{u}(t^{n+1}) ) +\kappa_0 }  -\frac{ \tilde{R}^{n+1} }{ E( \tilde{ \phi} ^{n+1} , \tilde{\textbf{u}}^{n+1} ) +\kappa_0 }| \\
 \leq & C(  | \tilde{e}_{R}^{n+1} | + \| \tilde{e}_{\textbf{u}}^{n+1} \| + \| \tilde{e}_{\phi}^{n+1} \| +   \| \nabla \tilde{e}_{\phi}^{n+1} \|  ) \\
  \leq & C_6 \Delta t \sqrt{  1+C_0^4 (\Delta t)^2 }, \ \  \ \forall n\leq T/\Delta t .
\endaligned
\end{equation}
where $C_6$ is independent of $C_0$ and $\Delta t$.

Let $C_0=\max\{ 2 C_6, 2 \sqrt{C_4 }, 2\sqrt{ C_3 },  (2C_2)^{ \frac{3}{2} } , \sqrt{ \frac{2C_1}{ \lambda } }, 4 \}$ and $\Delta t \leq \frac{1}{ 1+C_0^2 }$,  we can obtain
\begin{equation}\label{e_error_R_21}
\aligned
 C_6 \sqrt{  1+C_0^4 (\Delta t)^2 } \leq C_6 (1+C_0^2 \Delta t ) \leq C_0.
\endaligned
\end{equation}
Then combining \eqref{e_error_R_20} with \eqref{e_error_R_21} results in
\begin{equation}\label{e_error_R_22}
\aligned
 | 1- \xi^{n+1} | 
  \leq &  C_0 \Delta t,  \ \forall n\leq T/\Delta t.
\endaligned
\end{equation}
Recalling \eqref{e_error_final_utilde2}, we have
\begin{equation}\label{e_error_R_23}
\aligned
& \| \tilde{e}_{\textbf{u}}^{k}  \|_{H^2} + \| \tilde{e}_{\mu}^{k}  \|_{H^1} 
\leq  C_2 \left(1+C_0^4 (\Delta t)^2 \right) (\Delta t)^{1/2} \leq  (\Delta t)^{1/6},  
\endaligned
\end{equation}  
which completes the induction process \eqref{e_error_estimate1} and \eqref{e_error_estimate2}.

Then combining \eqref{e_error_final_utilde2} with \eqref{e_velocity8}, \eqref{e_velocity12} and \eqref{e_velocity13}, we obtain 
\begin{equation}\label{e_error_final}
\aligned
& \| e _{\phi}^{n+1} \|^2 +  \| \nabla e _{\phi}^{n+1} \|^2 +  \Delta t \sum\limits_{k=0}^{n} \| e _{\mu}^{k+1} \|^2 + \Delta t \sum\limits_{k=0}^{n} \| \nabla e_{\mu}^{k+1} \|^2  \\
& + \|  \nabla e_{\textbf{u}}^{n+1} \|^2  + \Delta t \sum\limits_{k=0}^{n} \| \Delta e_{\textbf{u}}^{k+1} \|^2  
\leq  C (\Delta t)^2, \ \forall n\leq T/\Delta t .
\endaligned
\end{equation}  

Now it remains to  estimate the  pressure error. Recalling \eqref{e_velocity5}, we can estimate \eqref{e_velocity4} into the following:
\begin{equation}\label{e_error_final_p}
\aligned
 \Delta t \sum\limits_{k=0}^{n} \| \nabla e_p^{k+1} \|^2 \leq &  C \Delta t \sum\limits_{k=0}^{n}  \| \Delta \tilde{e}_\textbf{u} ^{k+1} \|^2 + C \Delta t \sum\limits_{k=0}^{n}  \| \nabla  \tilde{e}_\textbf{u} ^{k+1} \|^2 \\
 & +  C \Delta t \sum\limits_{k=0}^{n}  \| \nabla e_{ \textbf{u} }^{k+1} \|^2 \| \nabla \textbf{u}(t^{k+1} ) \|  \| \nabla \textbf{u}(t^{k+1})  \|_{H^1} \\
 & + C \Delta t \sum\limits_{k=0}^{n} \Delta t \| \nabla \textbf{u}^{k+1} \|^2 \| \nabla e_{ \textbf{u} }^{k+1}  \|  \| \nabla e_{ \textbf{u} }^{k+1}  \|_{H^1} \\
& + C \Delta t \sum\limits_{k=0}^{n} \Delta t \| \nabla e_{ \mu }^{n+1} \|^2 \|  \nabla  \phi ^{n+1}  \|  \|  \nabla  \phi ^{n+1}  \|_{H^1} \\ 
 & + C \Delta t \sum\limits_{k=0}^{n} \Delta t \| \nabla \mu(t^{n+1} ) \|^2 \|  \nabla e_{ \phi}^{n+1}  \|  \| \nabla e_{ \phi}^{n+1} \|_{H^1} \\
\leq & C (\Delta t)^2,  \ \forall n\leq T/\Delta t ,
\endaligned
\end{equation} 
which completes the final results.

\end{proof}


 \section{Numerical experiments}
In this section, we provide some numerical experiments to verify our theoretical results of the constructed high-order EOP-GSAV schemes for the Cahn-Hilliard-Navier-Stokes model.

\subsection{Convergence Tests}
We first verify the accuracy of the proposed numerical schemes.
We choose the coefficients
\begin{equation}
\lambda=1,\quad M=1\times 10^{-3},\quad \epsilon=1,
\quad  \gamma=0, \quad  \nu=0.05,\quad C_0=1,
\end{equation}
and solve \eqref{e_model}
with right hand sides chosen so that the exact solution is
\begin{equation}
	\begin{aligned}
		\phi(x,y,t)&=\cos(t)\cos(\pi x)\cos(\pi y),\\
		\textbf{u}(x,y,t)&=\pi \sin(t)(\sin^2(\pi x)\sin(2\pi y),-\sin(2\pi x)\sin^2(\pi y))^T,\\
		p(x,y,t)&=\sin(t)\cos(\pi x)\sin(\pi y).
	\end{aligned}
\end{equation}

We set $\Omega= [-1, 1]^2$ and use $50\times50$ modes to discretize the space variables, so the spatial discretization
error is negligible compared to the time discretization.  In Figures \ref{error1a}-\ref{error1c}, we list the errors between the numerical solution and the exact solution at $T=0.2$. We observe that all schemes achieve the expected accuracy in time, which is consistent with the error analysis in Theorem \ref{thm: error_estimate_final}.

\begin{figure}[htp]
	\centering
	\subfigure[BDF1 vs. errors]{
		\begin{minipage}[c]{0.3\textwidth}
			\includegraphics[width=1\textwidth]{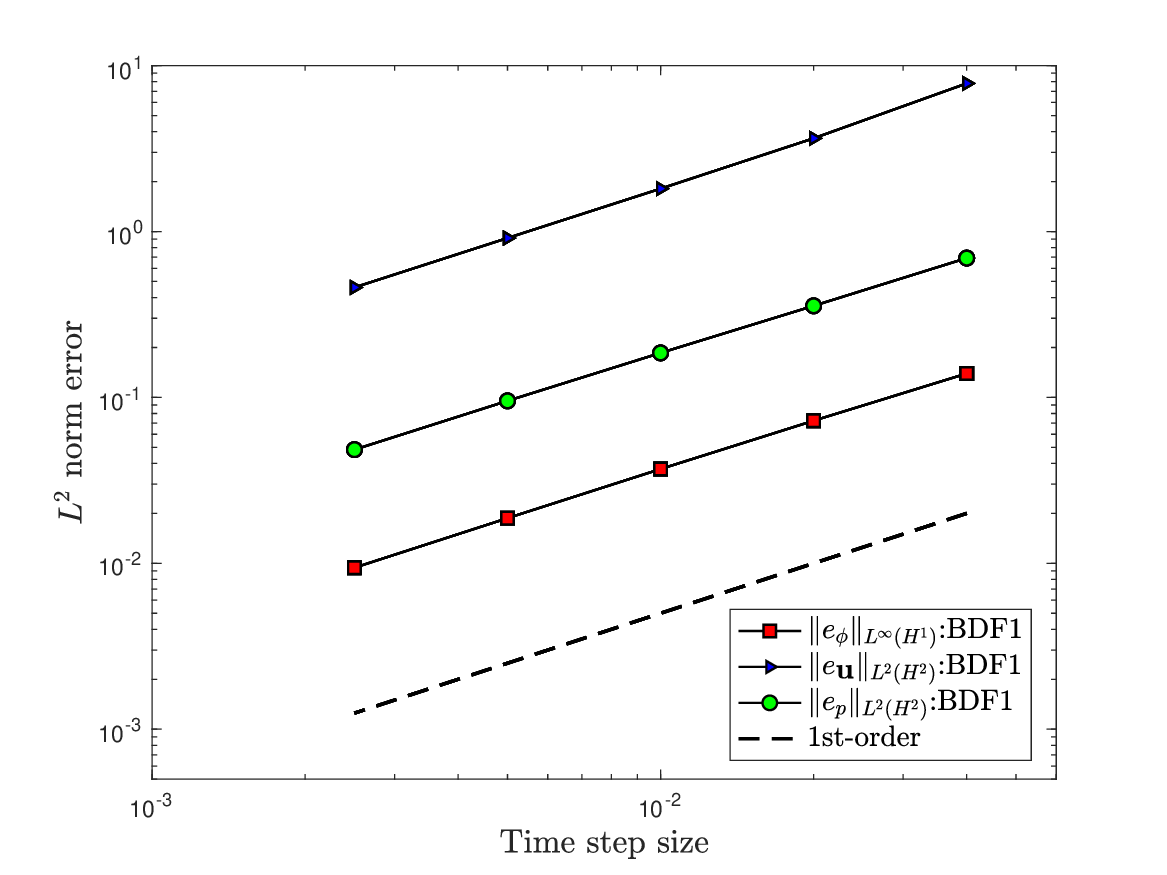}\label{error1a}
		\end{minipage}
	}
	\subfigure[BDF2 vs. errors]{
		\begin{minipage}[c]{0.3\textwidth}
			\includegraphics[width=1\textwidth]{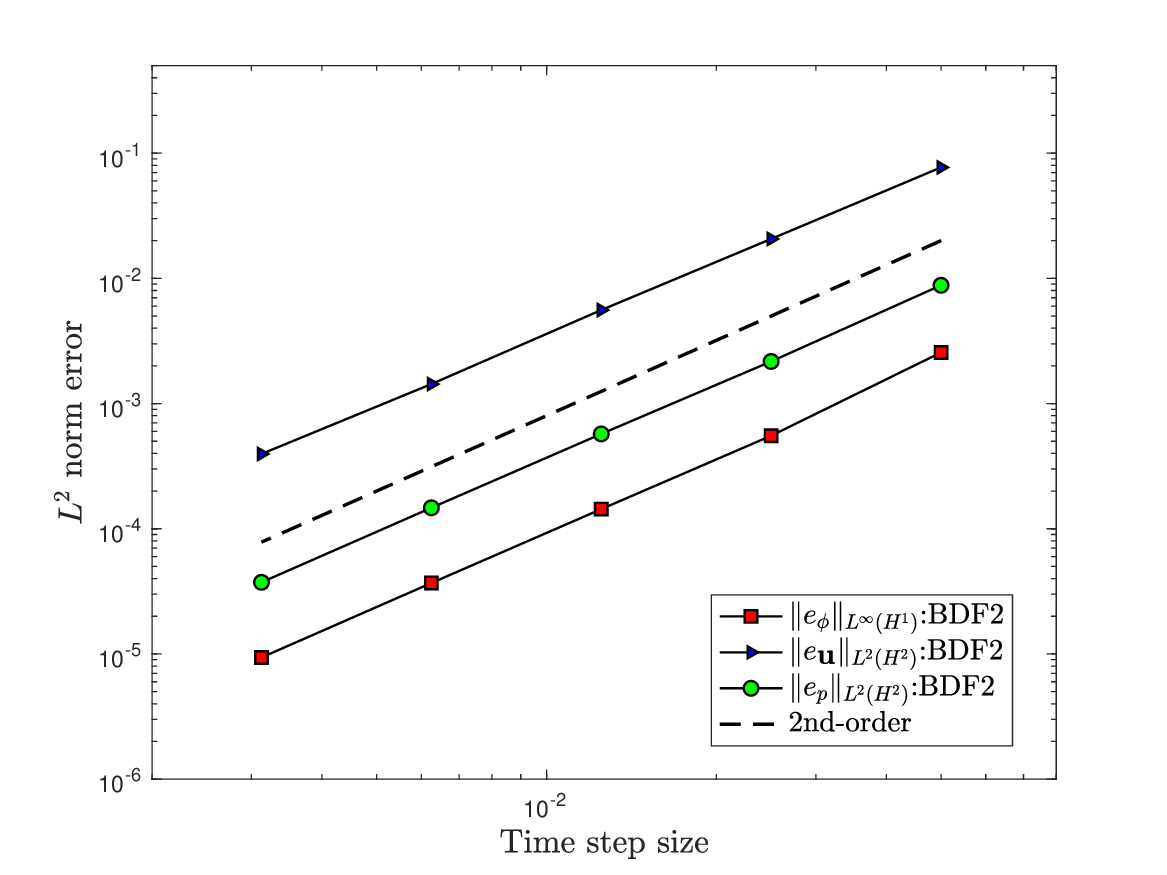}\label{error1b}
		\end{minipage}
	}
	\subfigure[BDF3 vs. errors]{
		\begin{minipage}[c]{0.3\textwidth}
			\includegraphics[width=1\textwidth]{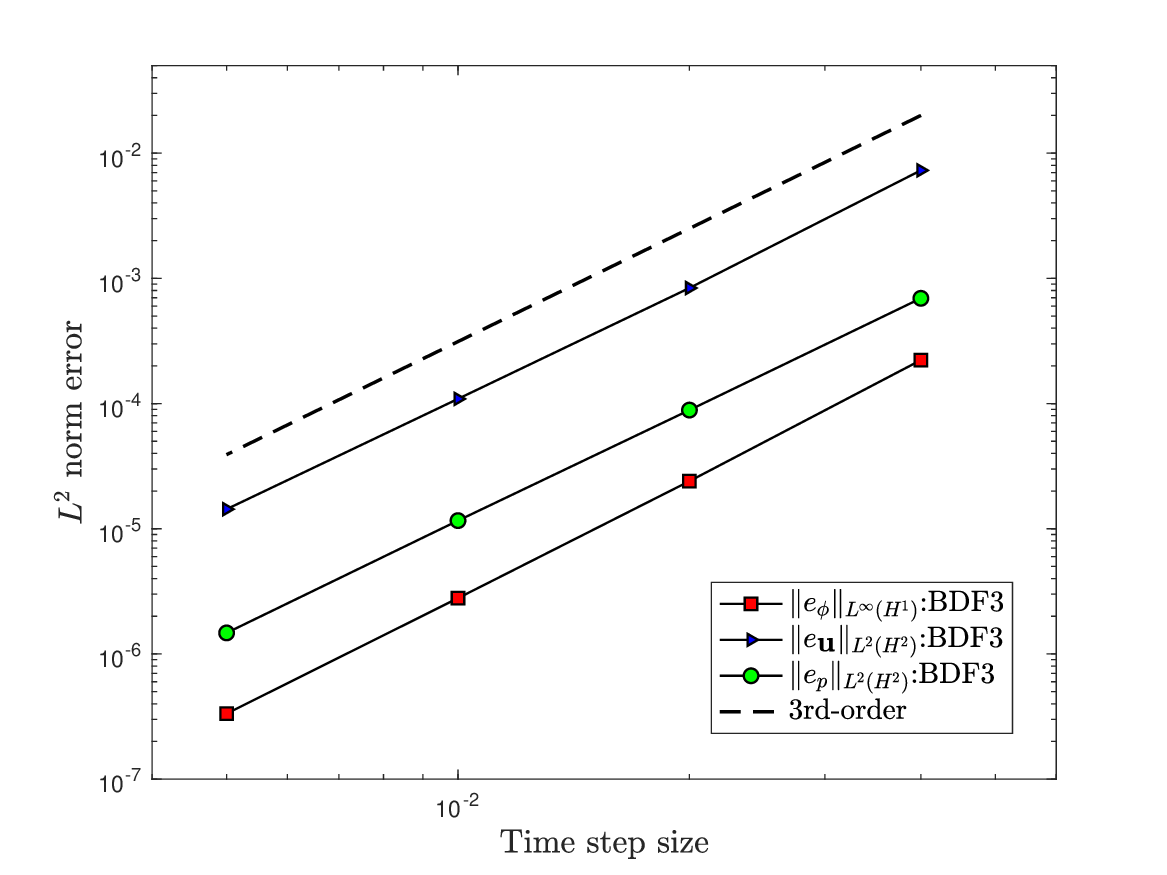}\label{error1c}
		\end{minipage}
	}
	{\caption{Numerical convergence rate of the first- to third-order schemes.}\label{error1}}
\end{figure}

\subsection{Shape relaxation}\label{Shape relaxation}
In this simulation, we set $\Omega= [0, 1]^2$ and use $128\times128$ modes to discretize the space variables. The parameters are
\begin{equation}
	\begin{aligned}
		\Delta t=&5\times 10^{-4},\quad \lambda=1\times 10^{-2},
		\quad M=1\times 10^{-3},\quad \epsilon=1\times 10^{-2},
		\\
		\gamma= &2\times 10^4,
		\quad 
		\nu=1,  \quad  \chi=6\times10, \quad
		C_0=10^5.
	\end{aligned}
\end{equation}
In Figure \ref{figure9}, we depict the dynamic process of shape relaxation towards a disk, considering various initial values. It can be clearly observed that the energy dissipation law holds for Cases 1 and 2 in Figure \ref{energy_evolu}. Furthermore, we can also find that the more vertices, the faster the evolution by comparing the energy evolutions of two cases.

 \begin{figure}[htp]
	\centering 
	Case 1
	\subfigure[$T$=0]{
		\begin{minipage}[c]{0.21\textwidth}
			\includegraphics[width=1.2\textwidth]{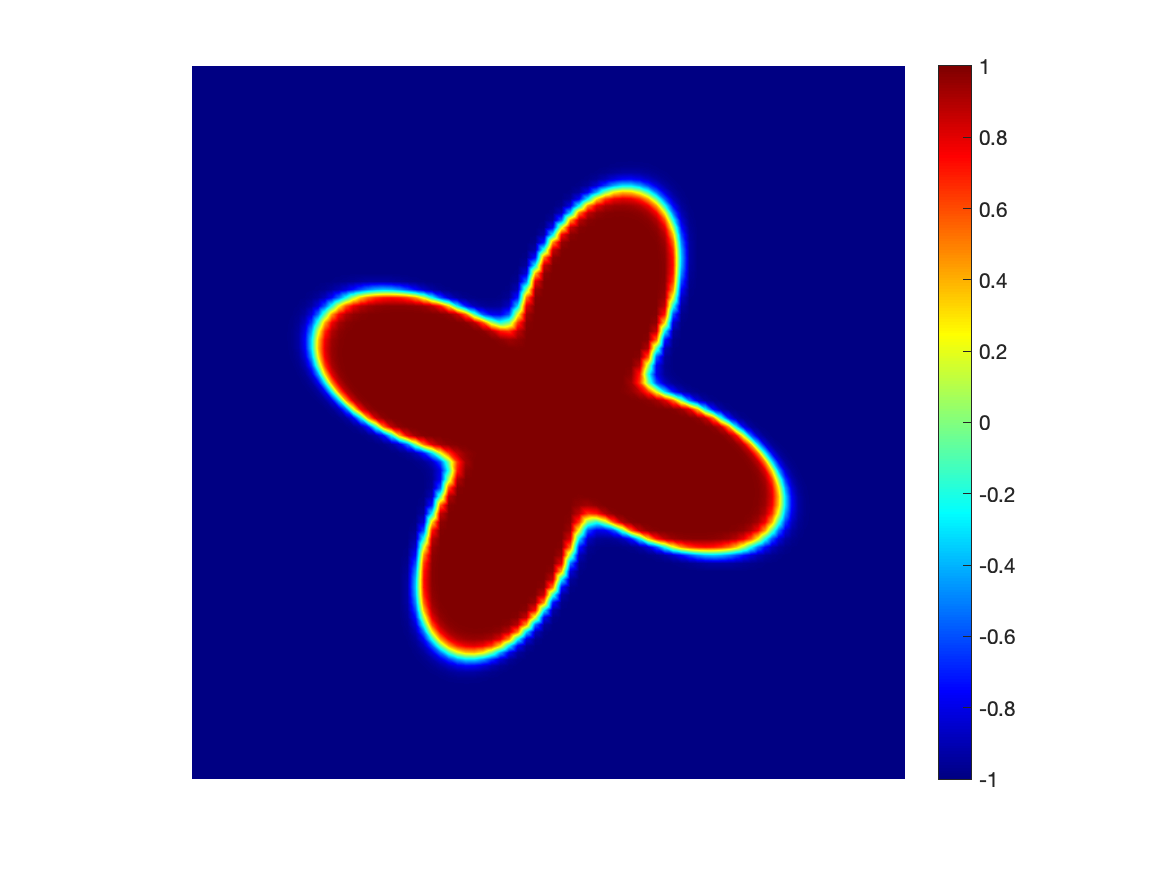}
		\end{minipage}
	}
	\subfigure[$T$=0.2]{
		\begin{minipage}[c]{0.21\textwidth}
			\includegraphics[width=1.2\textwidth]{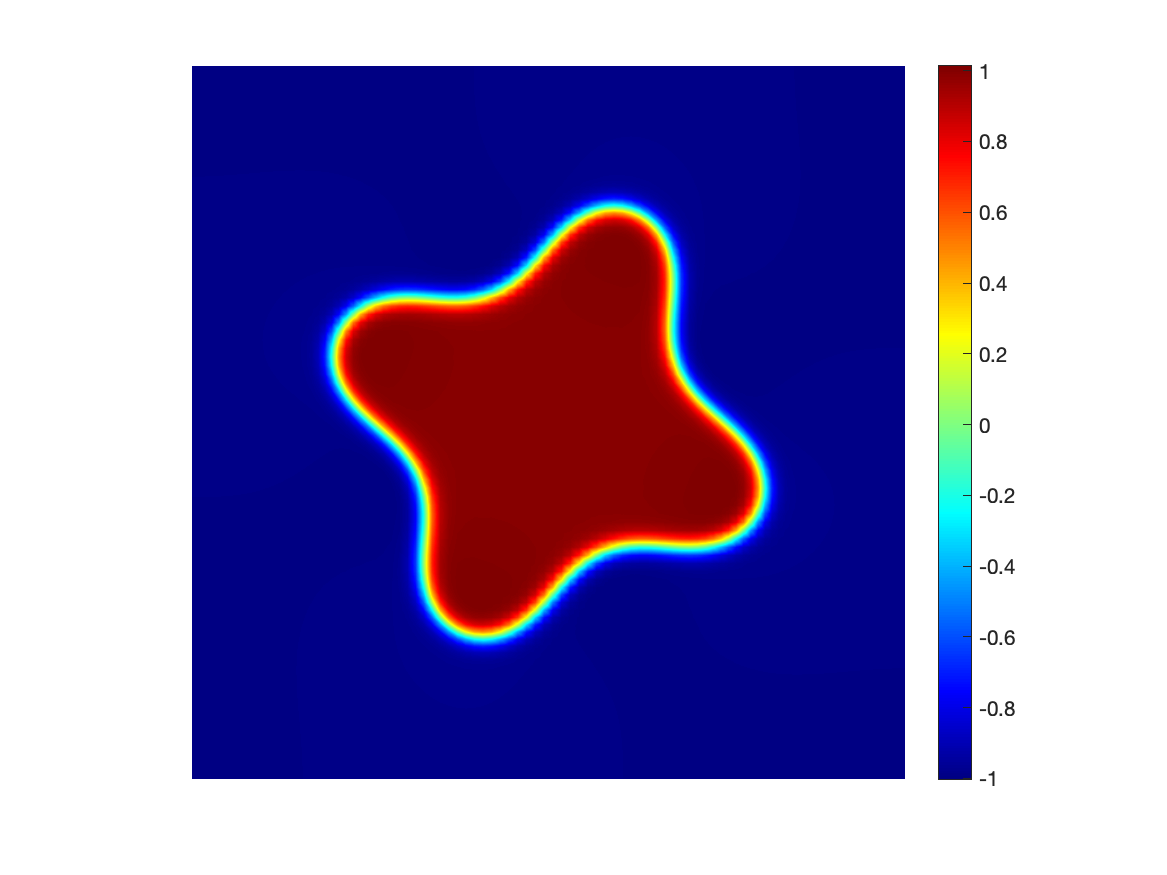}
		\end{minipage}
	}
	\subfigure[$T$=0.5]{
		\begin{minipage}[c]{0.21\textwidth}
			\includegraphics[width=1.2\textwidth]{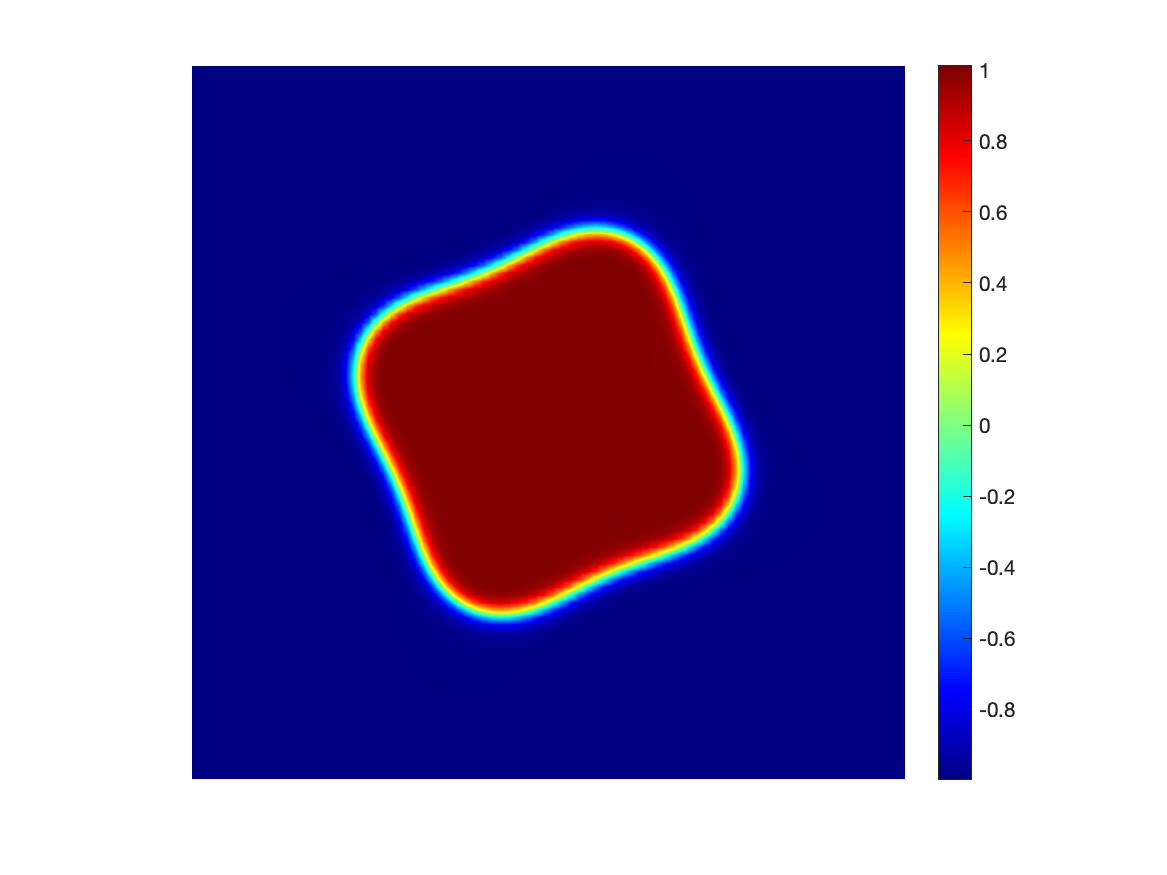}
		\end{minipage}
	}
	\subfigure[$T$=1.5]{
		\begin{minipage}[c]{0.21\textwidth}
			\includegraphics[width=1.2\textwidth]{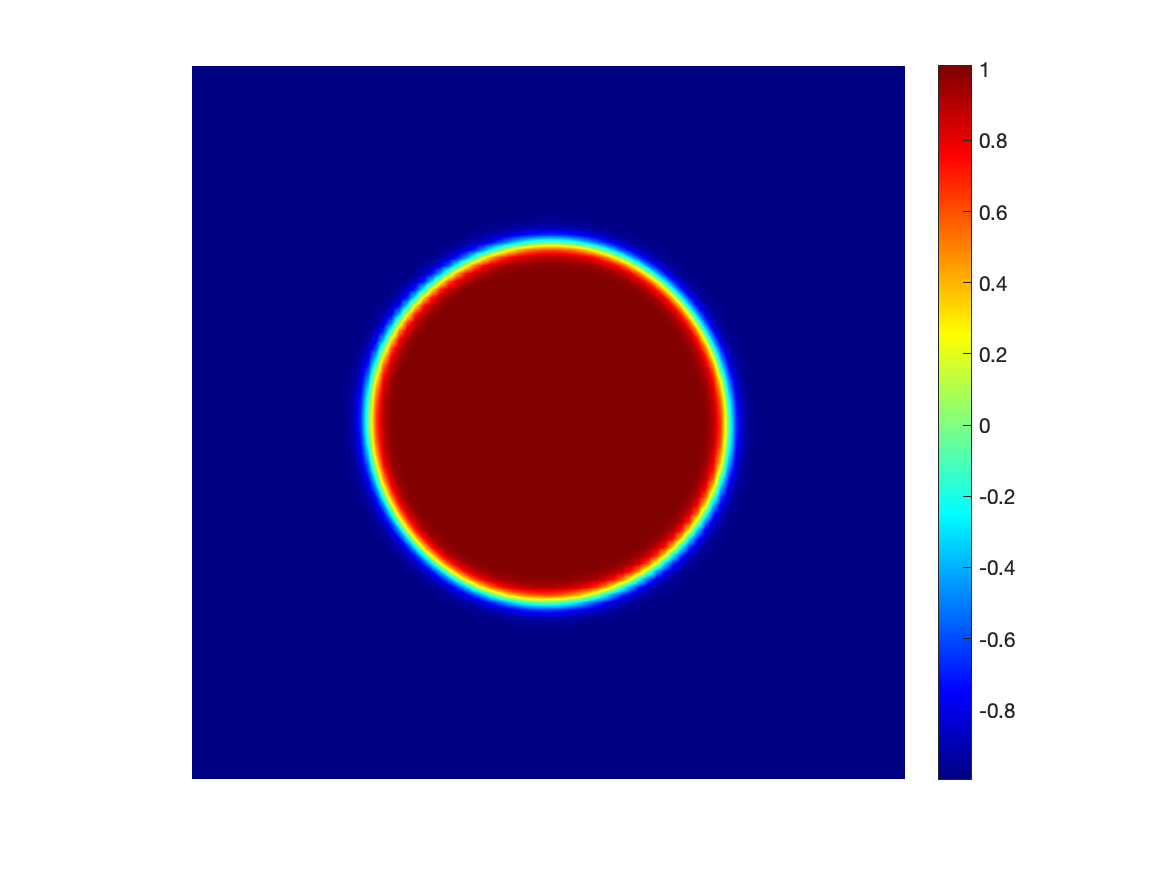}
		\end{minipage}
	}
	Case 2
	\subfigure[$T$=0]{
	\begin{minipage}[c]{0.21\textwidth}
		\includegraphics[width=1.2\textwidth]{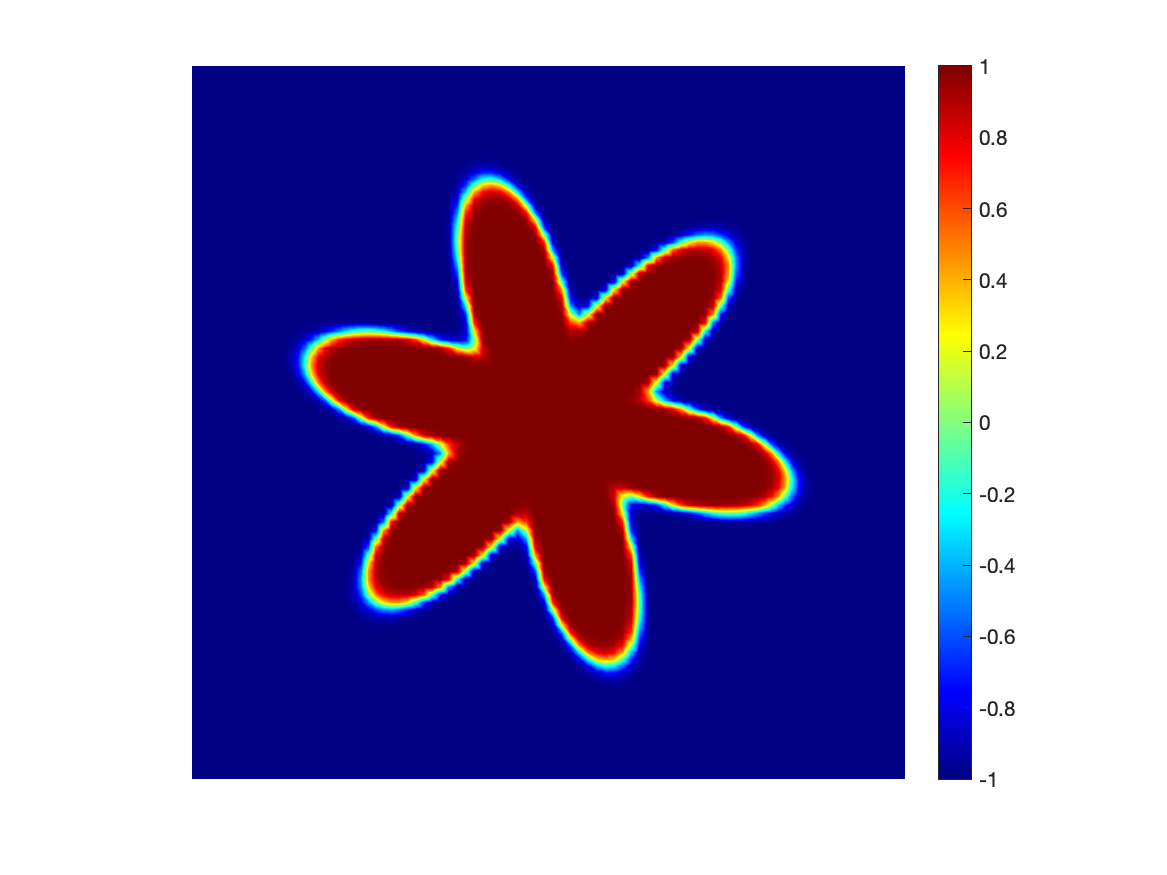}
	\end{minipage}
}
	\subfigure[$T$=0.2]{
		\begin{minipage}[c]{0.21\textwidth}
			\includegraphics[width=1.2\textwidth]{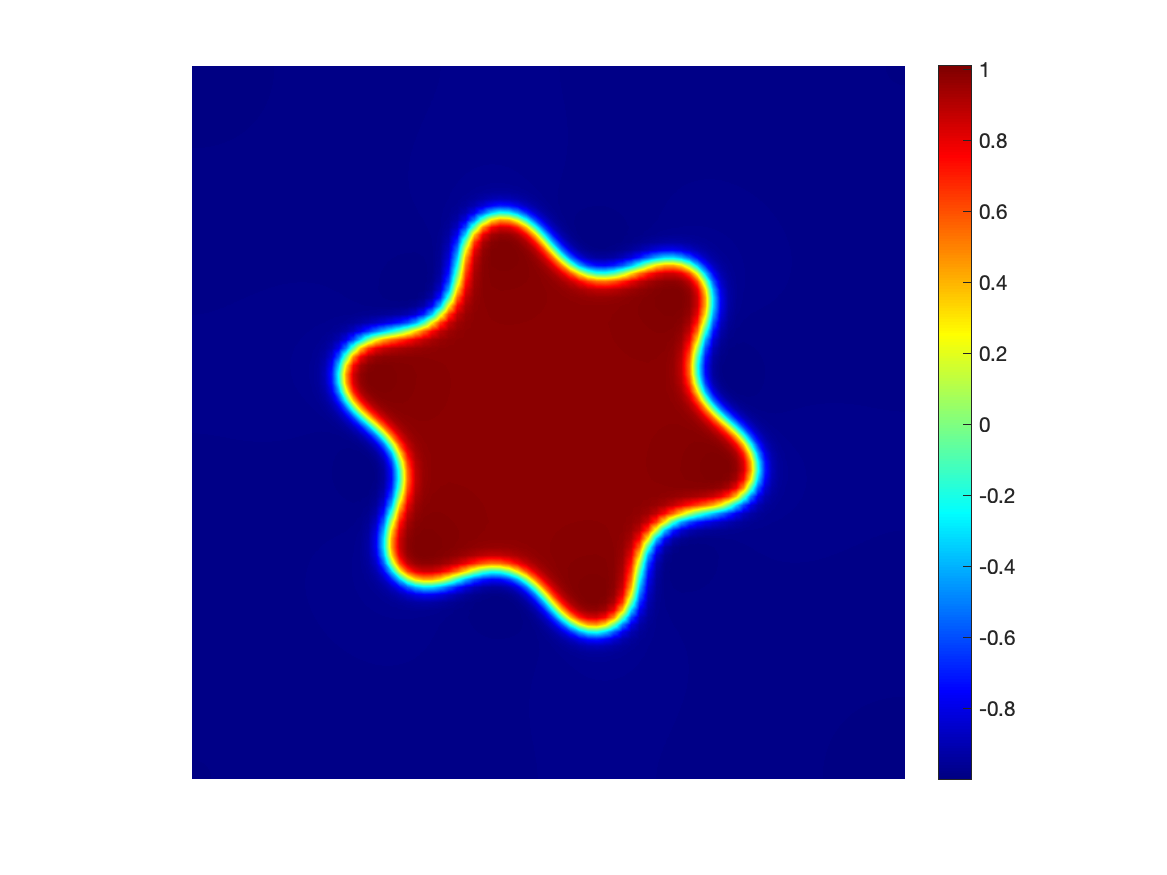}
		\end{minipage}
	}
	\subfigure[$T$=0.5]{
	\begin{minipage}[c]{0.21\textwidth}
		\includegraphics[width=1.2\textwidth]{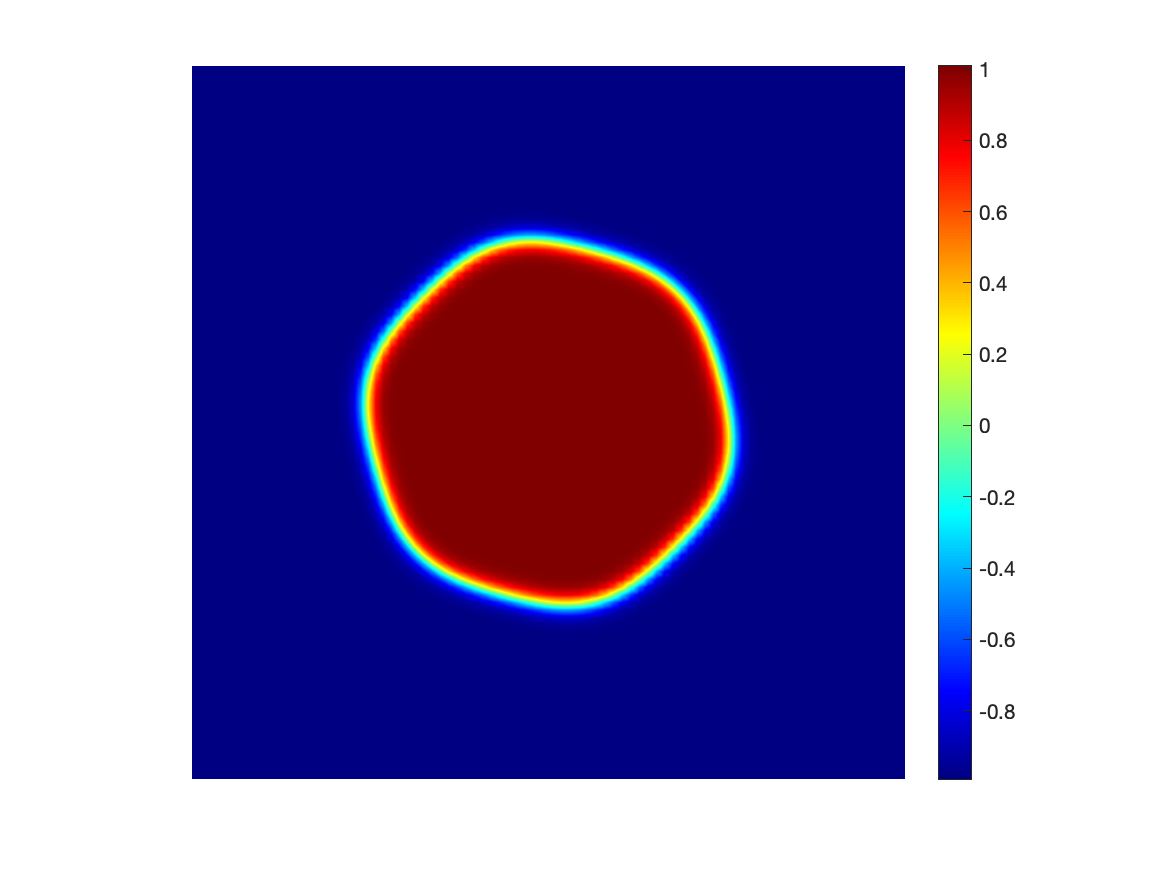}
	\end{minipage}
}
	\subfigure[$T$=1.5]{
		\begin{minipage}[c]{0.21\textwidth}
			\includegraphics[width=1.2\textwidth]{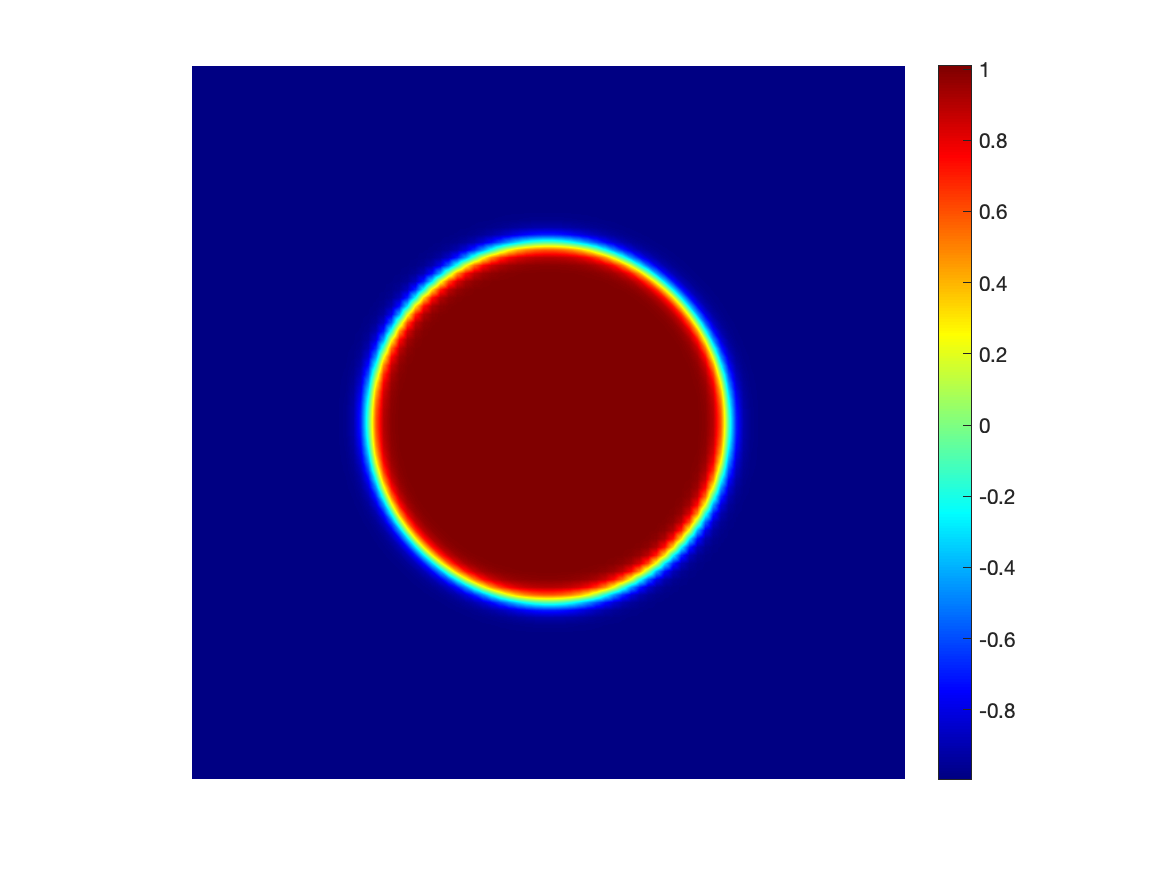}
		\end{minipage}
	}
		\vspace{-0.5cm}
	{\caption{Snapshots of the phase function $\phi$ at different $T$.} \label{figure9}}
\end{figure}

	\begin{figure}[htp]
	\centering
	\subfigure[]{
	\begin{minipage}[c]{0.45\textwidth}
		\includegraphics[width=1\textwidth]{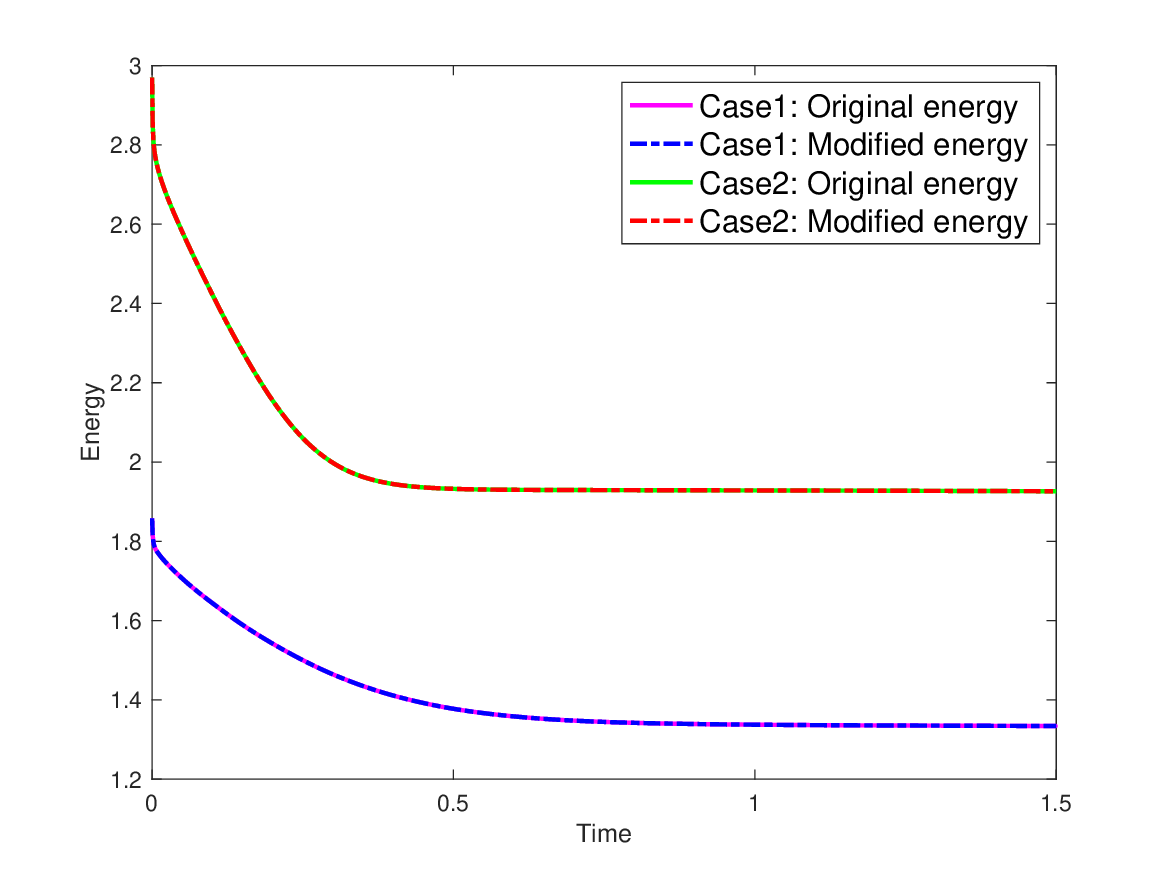} \label{energy_evolu}
	\end{minipage}
}
	\subfigure[]{
		\begin{minipage}[c]{0.45\textwidth}
			\includegraphics[width=1\textwidth]{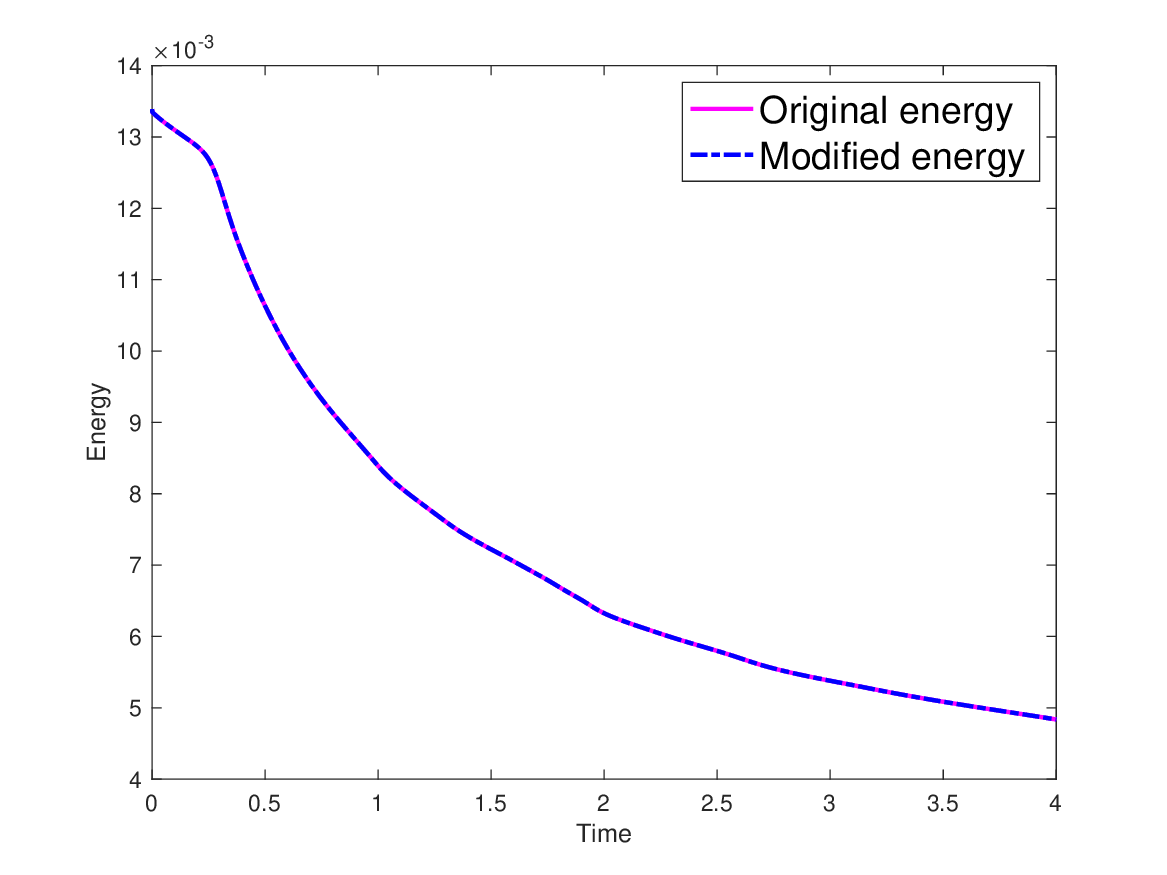} \label{energy_sep}
		\end{minipage}
	}
	{\caption{Evolutions of original and modified energy curves with respect to shape relaxation (left) and nucleation process (right) which are shown in subsection \ref{Shape relaxation} and subsection \ref{nucleation}, respectively.
}}
\end{figure}

\subsection{Flow-coupled phase separation}\label{nucleation}
In this subsection, the process of flow-coupled nucleation is considered. The initial condition of $phi$ is as follows
\begin{equation*}
	\begin{aligned}
		&\phi(x,y,0)=2y-1+0.01\text{rand}(x,y),
	\end{aligned}
\end{equation*}
where $\text{rand}(x,y)$ represents the random distribution between $-1$ and $1$. 

The parameters are set as follows:
\begin{equation}
	\begin{aligned}
		\Delta t=&1\times 10^{-3},\quad \lambda=1\times 10^{-5},
		\quad M=1\times 10^{-1},
		\\
	\epsilon=&1\times 10^{-2},	\quad 
		\gamma= 2\times 10^4,
		\quad 
		\nu=1, \quad
		C_0=10^5.
	\end{aligned}
\end{equation}
The magnitude of $\phi$ has a  larger value close to the upper and lower boundaries and a smaller value close to the domain center with the specified initially condition.
In Figrue \ref{rr}, it can be noticed that the phase separation occurs close to the domain center.
The generated droplets eventually disappear over time since the interfacial length as a whole shrank as a result of energy decays.
We can find that the energy decays monotonically in Figure \ref{energy_sep}.
 \begin{figure}[htp]
	\centering 
	\subfigure[$T$=1]{
		\begin{minipage}[c]{0.22\textwidth}
			\includegraphics[width=1.3\textwidth]{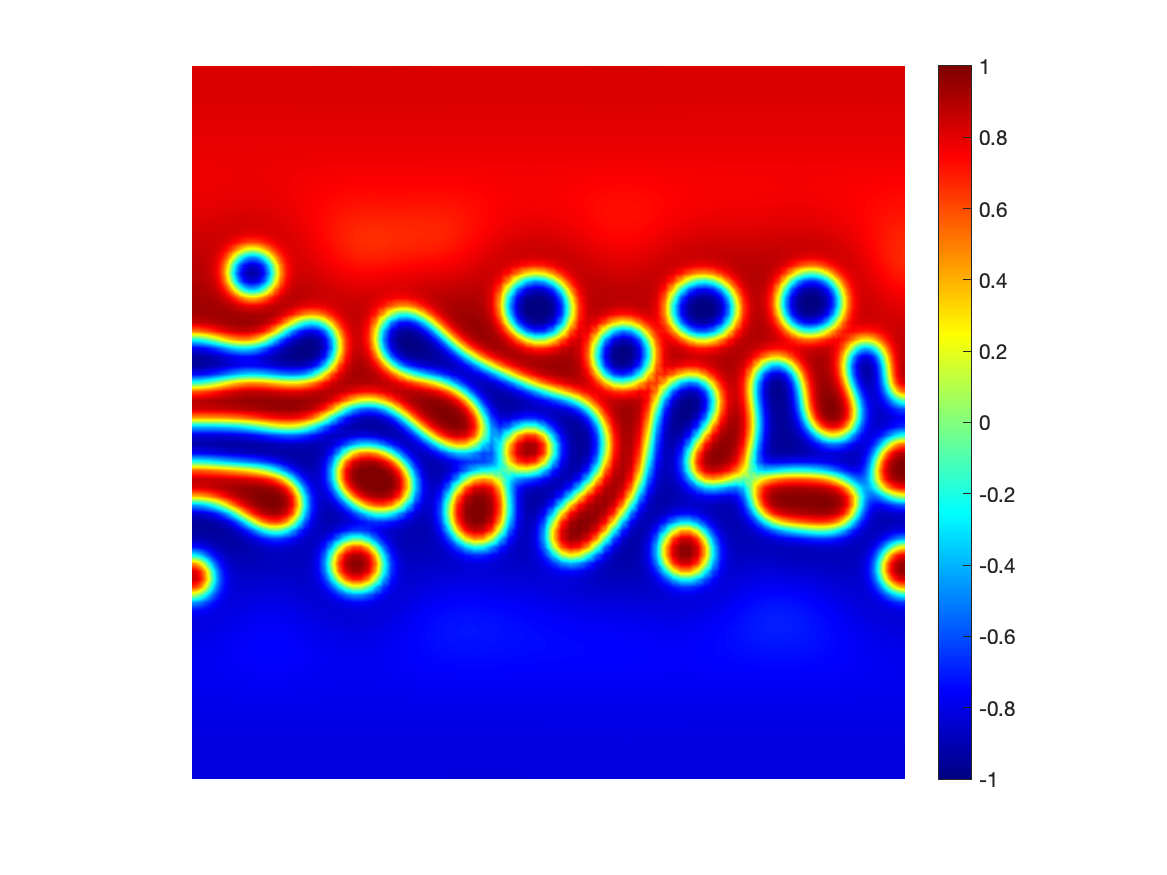}
		\end{minipage}
	}
	\subfigure[$T$=2]{
		\begin{minipage}[c]{0.22\textwidth}
			\includegraphics[width=1.3\textwidth]{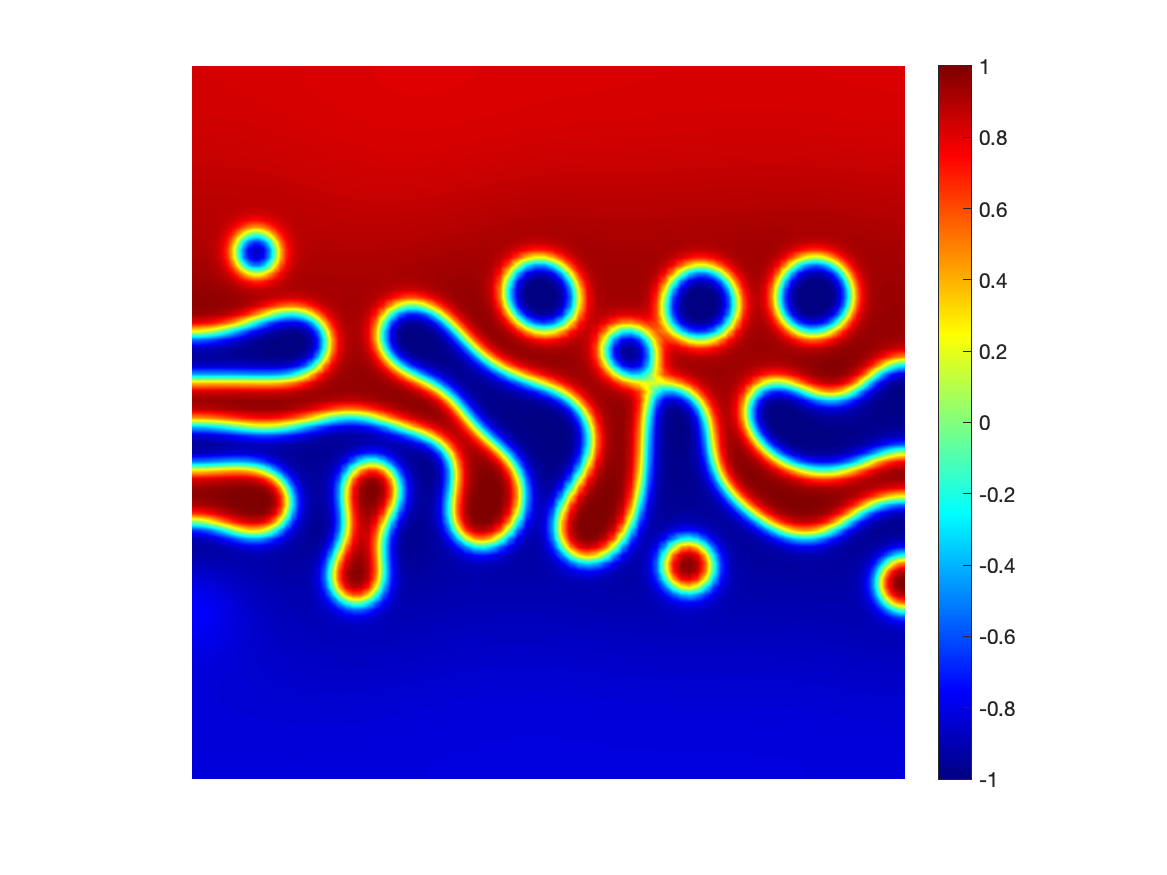}
		\end{minipage}
	}
	\subfigure[$T$=3]{
		\begin{minipage}[c]{0.22\textwidth}
			\includegraphics[width=1.3\textwidth]{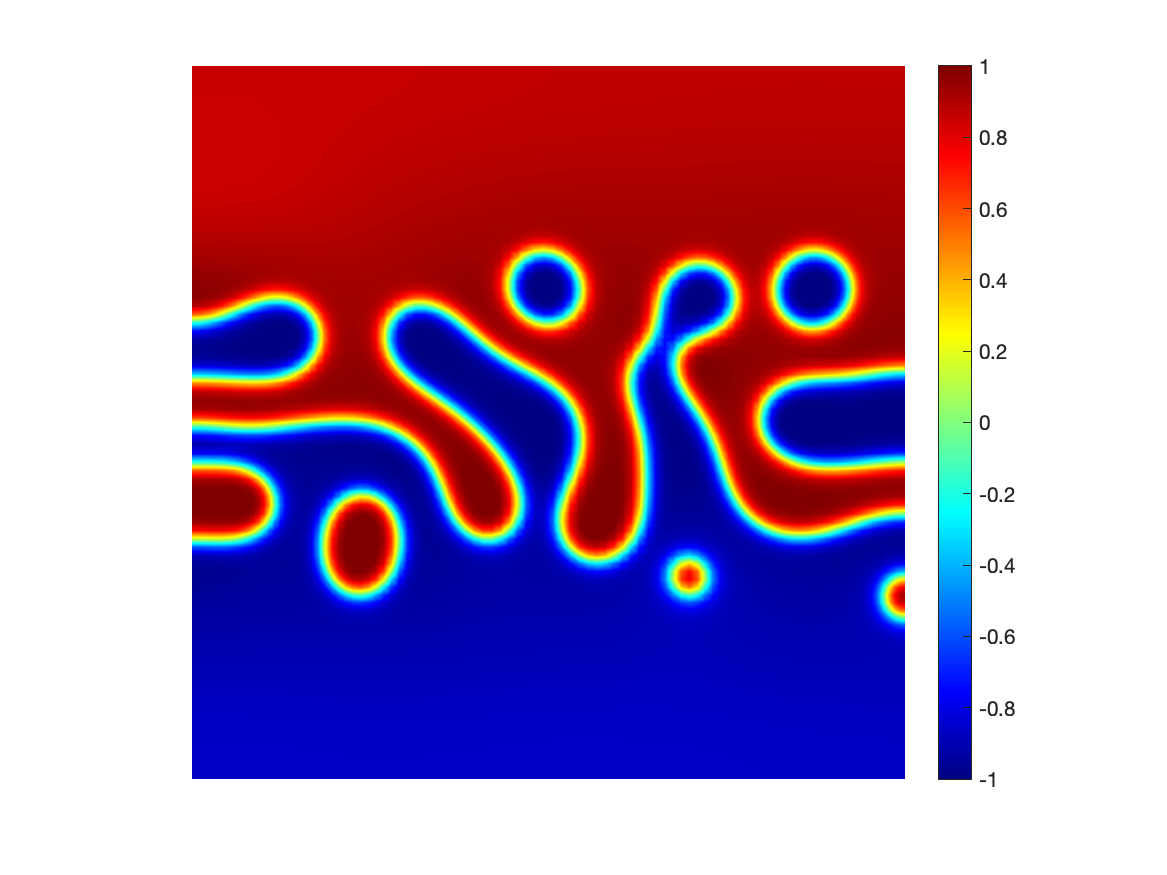}
		\end{minipage}
	}
	\subfigure[$T$=4]{
		\begin{minipage}[c]{0.22\textwidth}
			\includegraphics[width=1.3\textwidth]{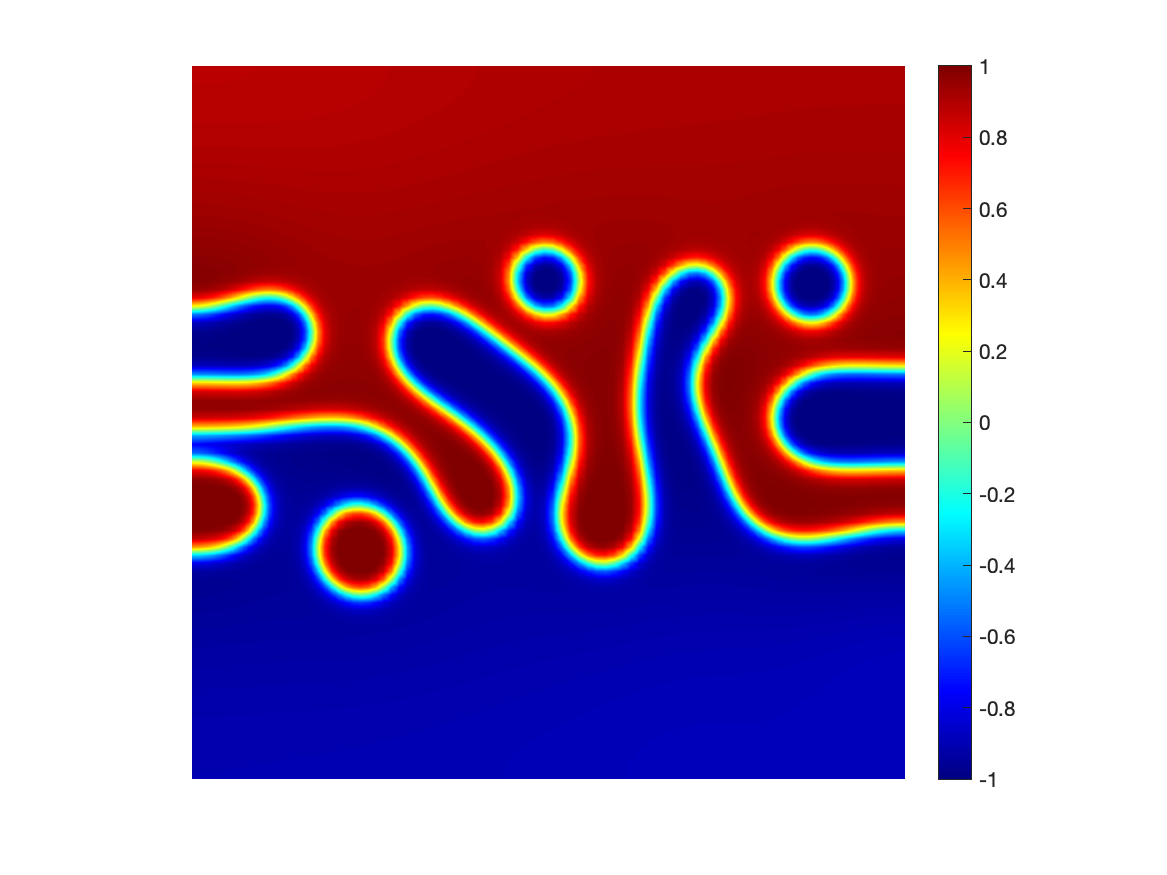}
		\end{minipage}
	}
	\subfigure[$T$=1]{
		\begin{minipage}[c]{0.22\textwidth}
			\includegraphics[width=1.3\textwidth]{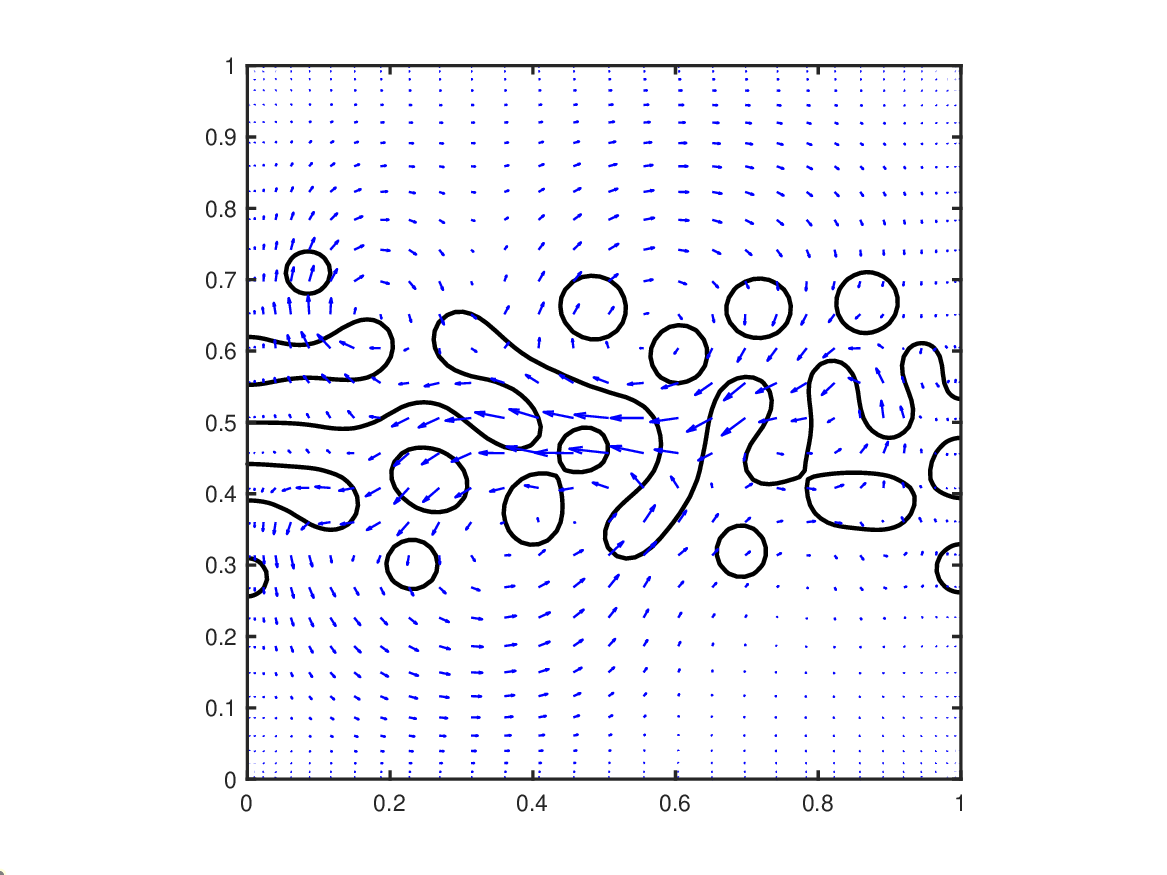}
		\end{minipage}
	}
	\subfigure[$T$=2]{
		\begin{minipage}[c]{0.22\textwidth}
			\includegraphics[width=1.3\textwidth]{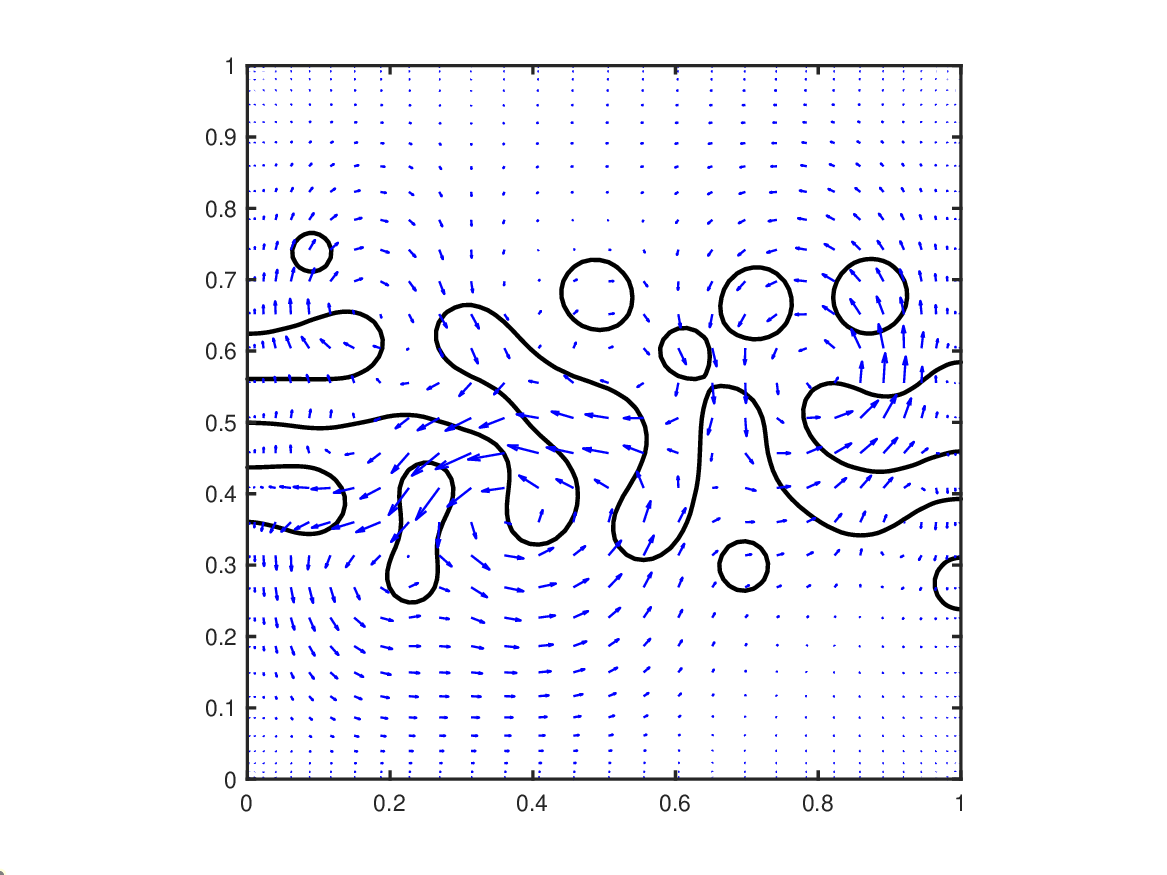}
		\end{minipage}
	}
	\subfigure[$T$=3]{
		\begin{minipage}[c]{0.22\textwidth}
			\includegraphics[width=1.3\textwidth]{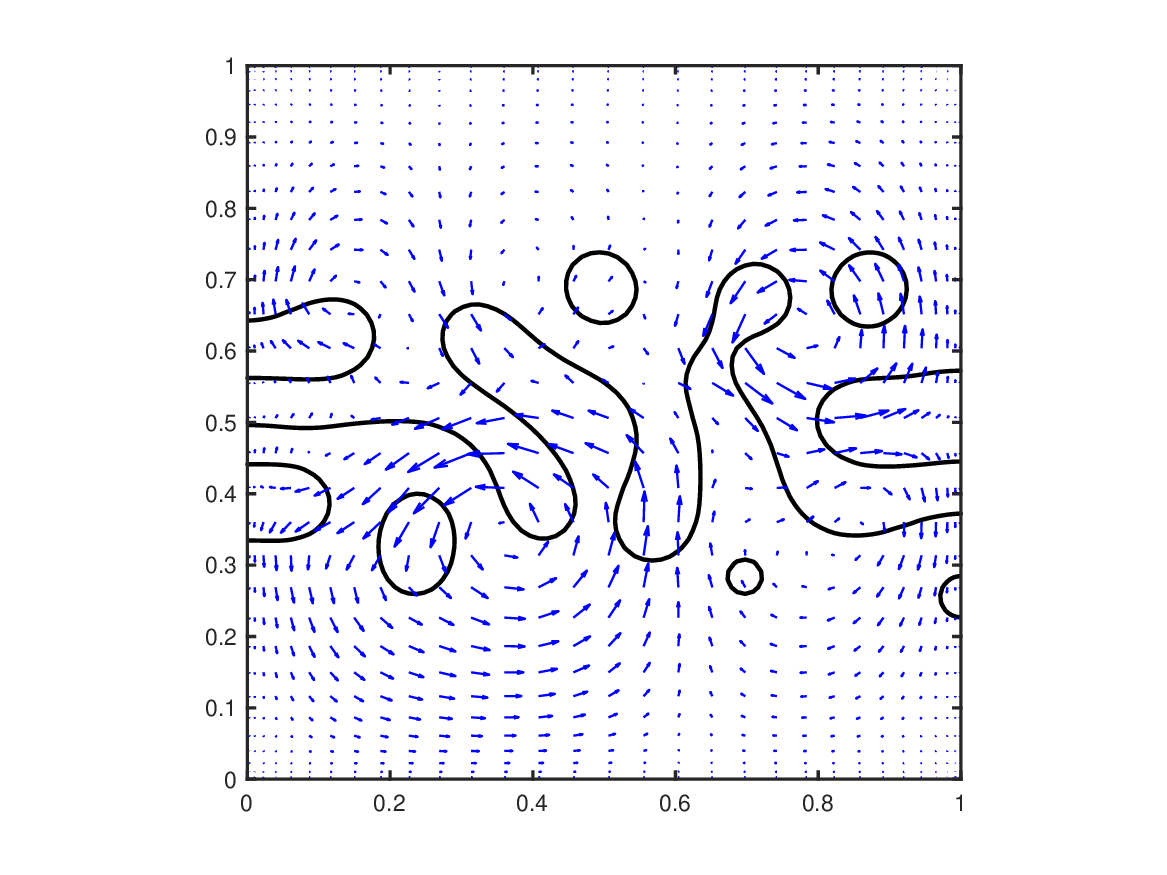}
		\end{minipage}
	}
	\subfigure[$T$=4]{
		\begin{minipage}[c]{0.22\textwidth}
			\includegraphics[width=1.3\textwidth]{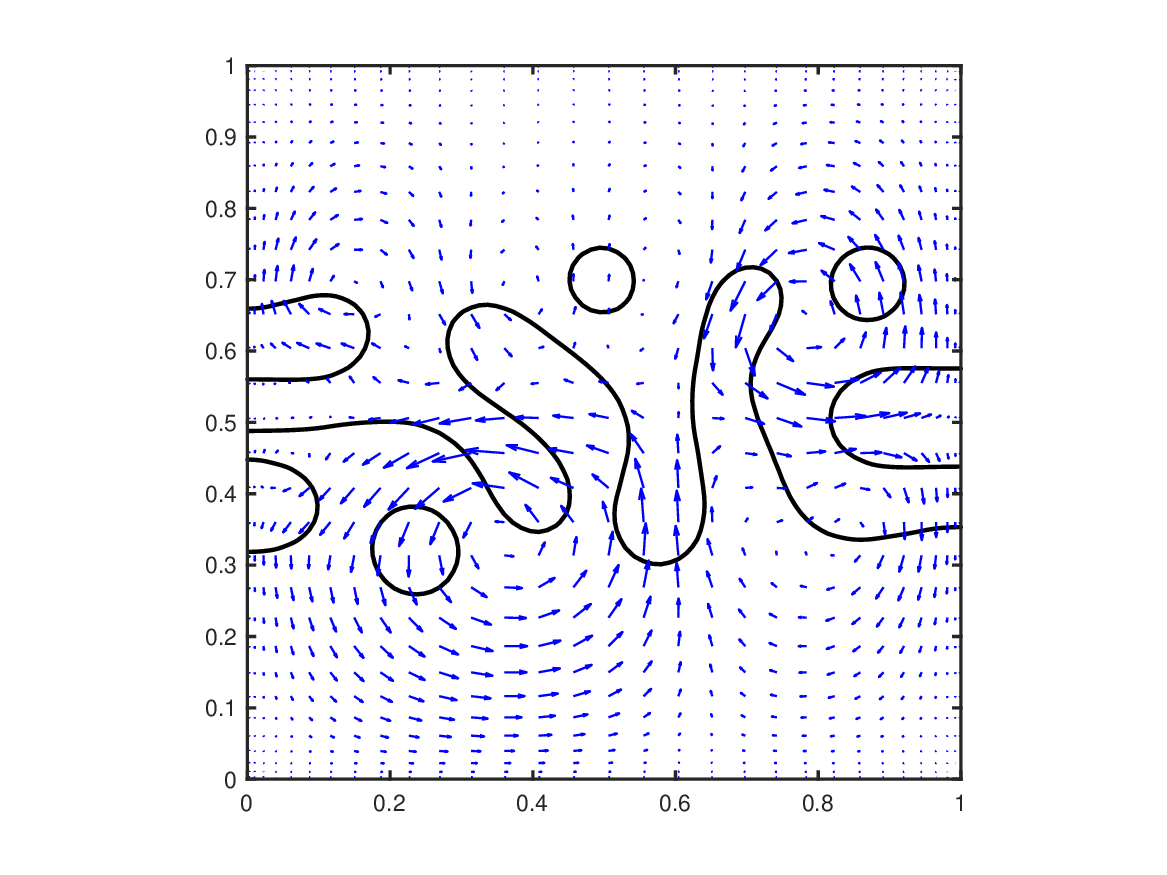}
		\end{minipage}
	}
	{\caption{The top and bottom rows display the snapshots of the phase function $\phi$ and velocity field for the flow-coupled nucleation process at different time $T$.} \label{rr}}
\end{figure}

\subsection{Buoyancy-driven flow}
In this example, we reformulate the momentum equation as follows:
\begin{equation}
	  \frac{\partial \textbf{u}}{\partial t}+ \textbf{u}\cdot \nabla\textbf{u}
	-\nu\Delta\textbf{u}+\nabla p= \mu \nabla \phi+\chi\rho(\phi)\textbf{g},
\end{equation}
where $\chi\rho(\phi)\textbf{g}$ is a buoyancy term
$\chi\rho(\phi)=\chi(\phi-\bar{\phi})$, and 
$\bar{\phi}$ is spatially averaged order parameter.
The computational domain is $\Omega= [0, 1]^2$ and we use $128\times128$ modes to discretize the space variables.
\subsubsection{Bubble rising}
The numerical and physical parameters are provided as follows:
\begin{equation}
	\begin{aligned}
\Delta t=&5\times 10^{-4},\quad \lambda=1\times 10^{-3},
\quad M=1\times 10^{-2},\quad \epsilon=1\times 10^{-2},
\\
 \gamma= &2\times 10^4,
 \quad 
 \nu=1, \quad 
 g=(0,-1)^T, \quad  \chi=5\times10, \quad
 C_0=10^5.
 	\end{aligned}
\end{equation}
We set the initial condition for the phase function as a circular bubble centered at $(0.5, 0.25)$, and the initial velocity is initialized as $u_0 = 0$.
 In Figure \ref{figure3}, snapshots of the phase evolution at various time points ($t = 1, 1.75, 2.55, 3.25, 3.35, 4$) are displayed. 
 Initially, the bubble appears as a circular shape near the bottom of the domain. The bubble, which is lighter compared to the surrounding fluid, rises gradually, transitioning into an elliptical shape, and eventually deforms as it approaches the upper boundary, as expected.

 \begin{figure}[htp]
 	\centering 
 	\subfigure[$T$=1]{
 		\begin{minipage}[c]{0.3\textwidth}
 			\includegraphics[width=1.1\textwidth]{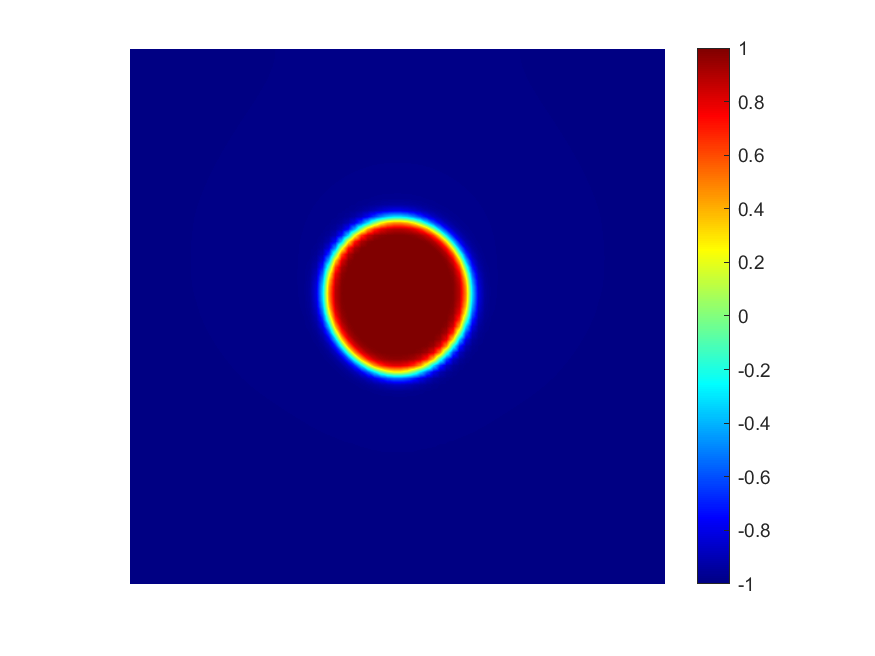}
 		\end{minipage}
 	}
 	\subfigure[$T$=1.75]{
 		\begin{minipage}[c]{0.3\textwidth}
 			\includegraphics[width=1.1\textwidth]{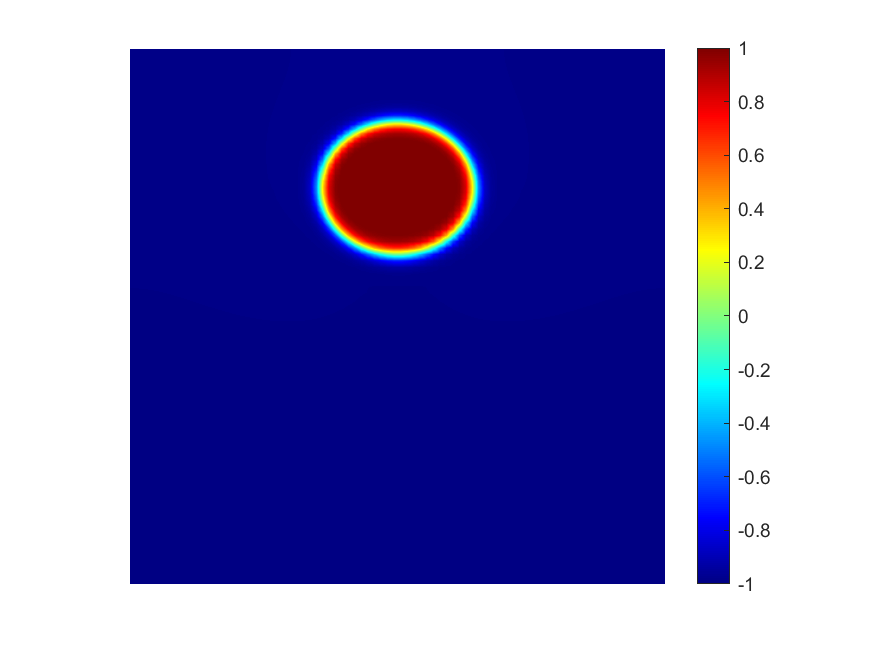}
 		\end{minipage}
 	}
 	\subfigure[$T$=2.55]{
 		\begin{minipage}[c]{0.3\textwidth}
 			\includegraphics[width=1.1\textwidth]{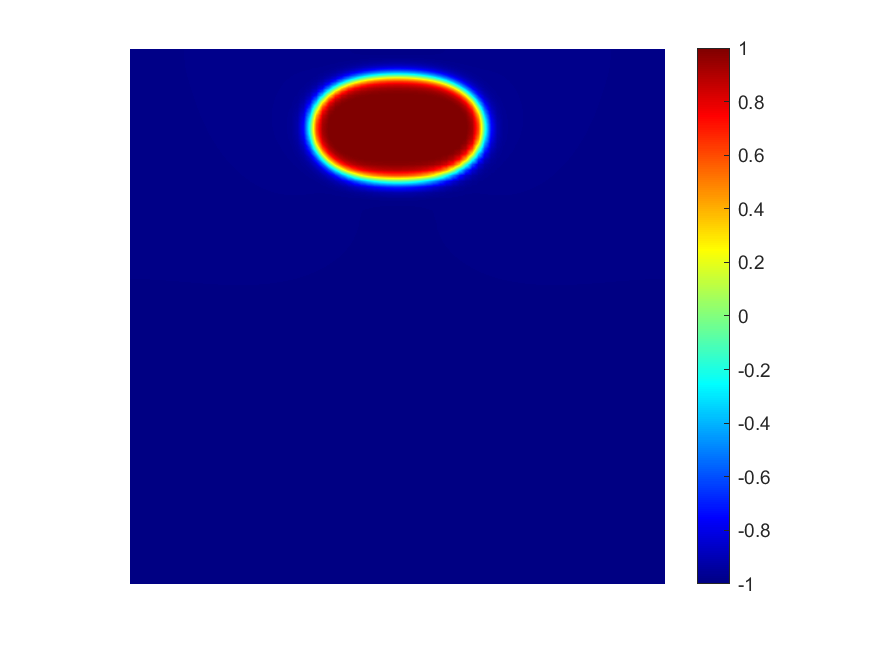}
 		\end{minipage}
 	}
 	\subfigure[$T$=3.25]{
 		\begin{minipage}[c]{0.3\textwidth}
 			\includegraphics[width=1.1\textwidth]{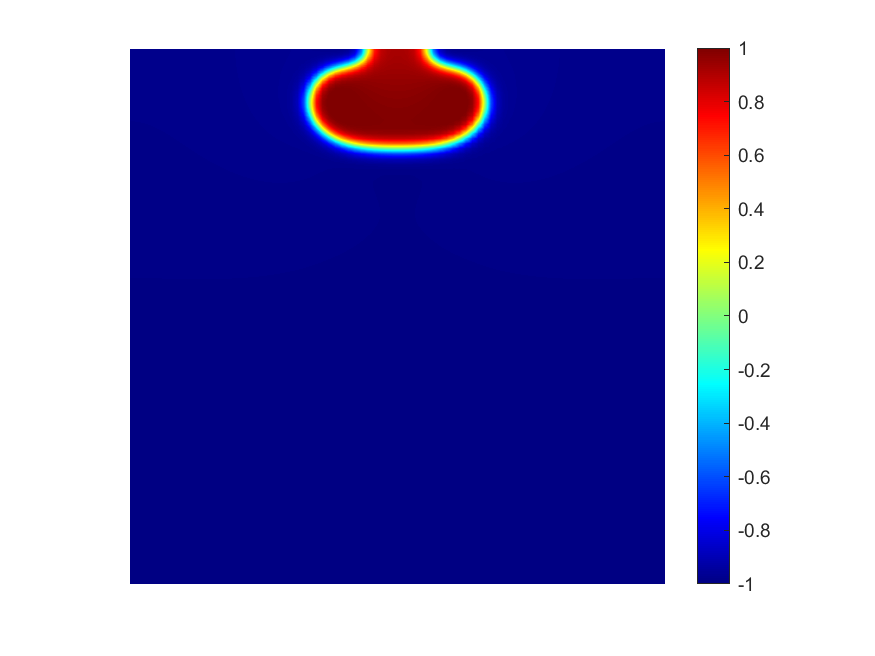}
 		\end{minipage}
 	}
 	\subfigure[$T$=3.35]{
 		\begin{minipage}[c]{0.3\textwidth}
 			\includegraphics[width=1.1\textwidth]{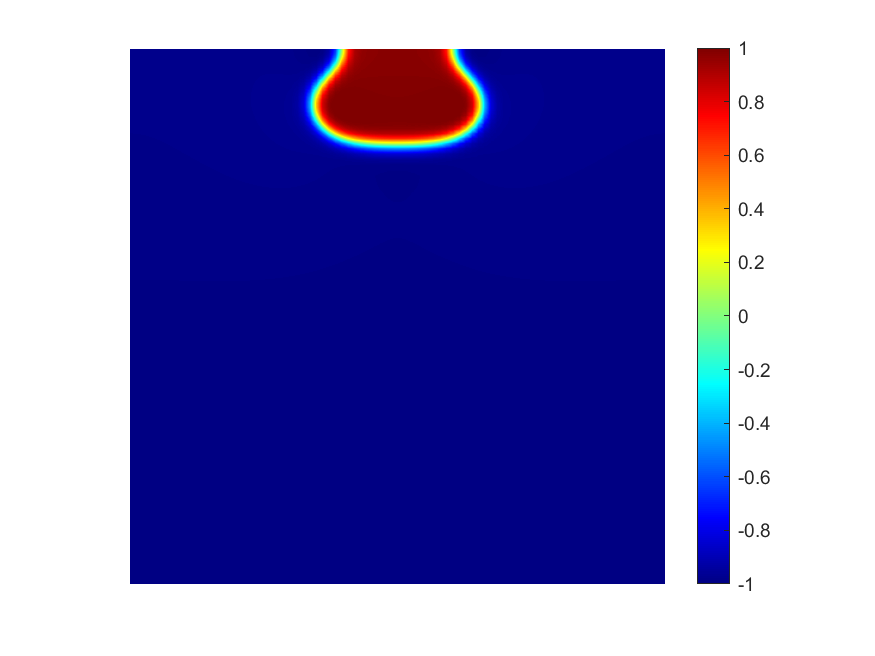}
 		\end{minipage}
 	}
 	\subfigure[$T$=4]{
 		\begin{minipage}[c]{0.3\textwidth}
 			\includegraphics[width=1.1\textwidth]{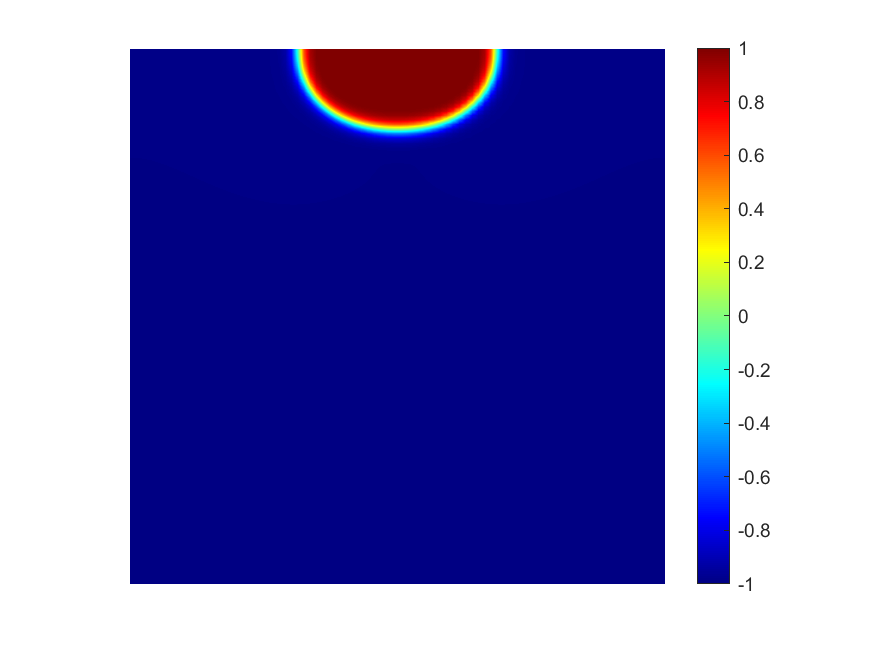}
 		\end{minipage}
 	}
 	{\caption{Snapshots of the phase function $\phi$ at different $T$.}\label{figure3}}
 \end{figure}

 \subsubsection{Dripping droplet}
 We first conduct simulations to observe the evolving behavior of a dripping droplet under different Reynolds numbers: $\nu= 1/10, 1/50$, and $1/100$.
Set the parameters as
\begin{equation}
	\begin{aligned}
			\Delta t=&1\times 10^{-3},
		\quad \lambda=1\times10^{-3},
		\quad M=1\times10^{-2},
		\quad \epsilon=1\times 10^{-2},\\
		&	\gamma=2\times10^4,
		\quad 
		g=(0,1)^T, \quad  \chi=10,
		\quad
		C_0=10^5.
	\end{aligned}
\end{equation}
 Initially, the droplet with heavier density is attached to the upper solid wall.
Due to the influence of gravity, the droplet gradually descends over time.
In Figure \ref{figure6}, the top row presents a snapshot of the droplet at $\nu = 1/10$, while the second and third rows depict the evolution results for $\nu = 1/50$ and $\nu = 1/100$ respectively. It is evident that as the Reynolds number $1/\nu$ increases, the pinch off occurs more rapidly and the droplet descends at a faster rate.

Next, we choose the parameters as
\begin{equation}
	\begin{aligned}
		\Delta t=&1\times 10^{-3},
		\quad \lambda=1\times10^{-5},
		\quad M=1\times10^{-1},
		\quad \epsilon=7.5\times 10^{-3},\\
		&\gamma=\frac{2}{\epsilon^2}, \quad 
		\nu=5\times 10^{-2},\quad
		g=(0,1)^T, \quad  \chi=10,
		\quad
		C_0=10^5.
	\end{aligned}
\end{equation}
 The snapshot of the droplet is shown in Figures \ref{figure5}-\ref{figure7}  with the initial state unchanged. The formation of spike structures becomes evident, particularly when the liquid filament is extremely elongated.

 \begin{figure}[!t]
 	\centering 
 	\subfigure[$\nu=0.1$, \ $T$=0.8]{
 		\begin{minipage}[c]{0.3\textwidth}
 			\includegraphics[width=1.1\textwidth]{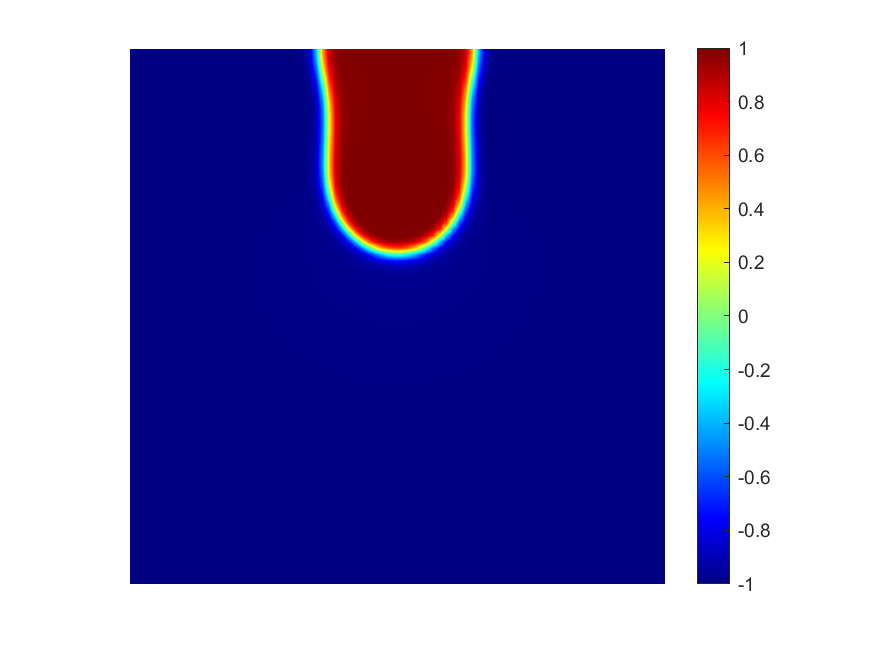}
 		\end{minipage}
 	}
 	\subfigure[$\nu=0.1$, \ $T$=1.2]{
 		\begin{minipage}[c]{0.3\textwidth}
 			\includegraphics[width=1.1\textwidth]{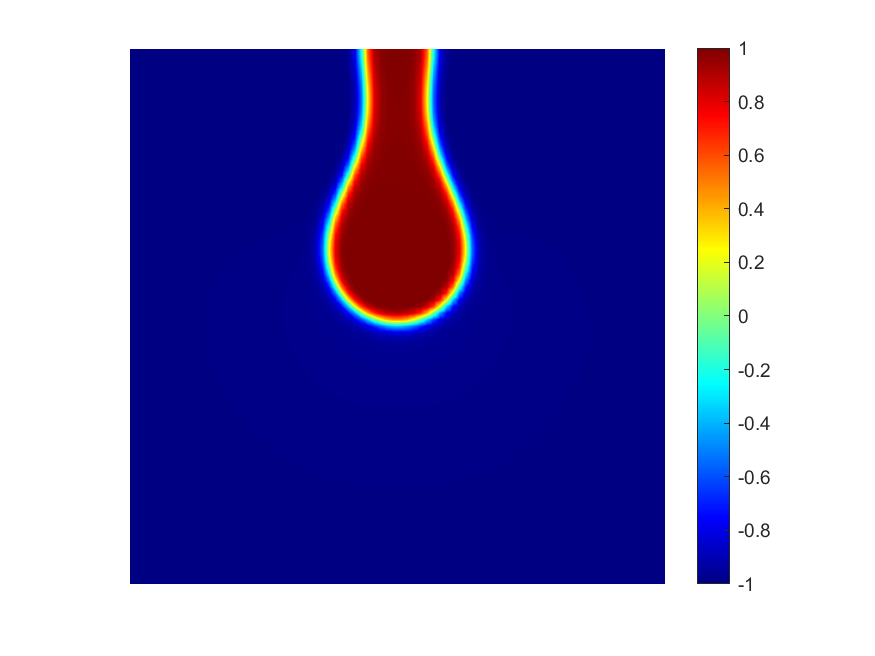}
 		\end{minipage}
 	}
 	\subfigure[$\nu=0.1$, \ $T$=2]{
 		\begin{minipage}[c]{0.3\textwidth}
 			\includegraphics[width=1.1\textwidth]{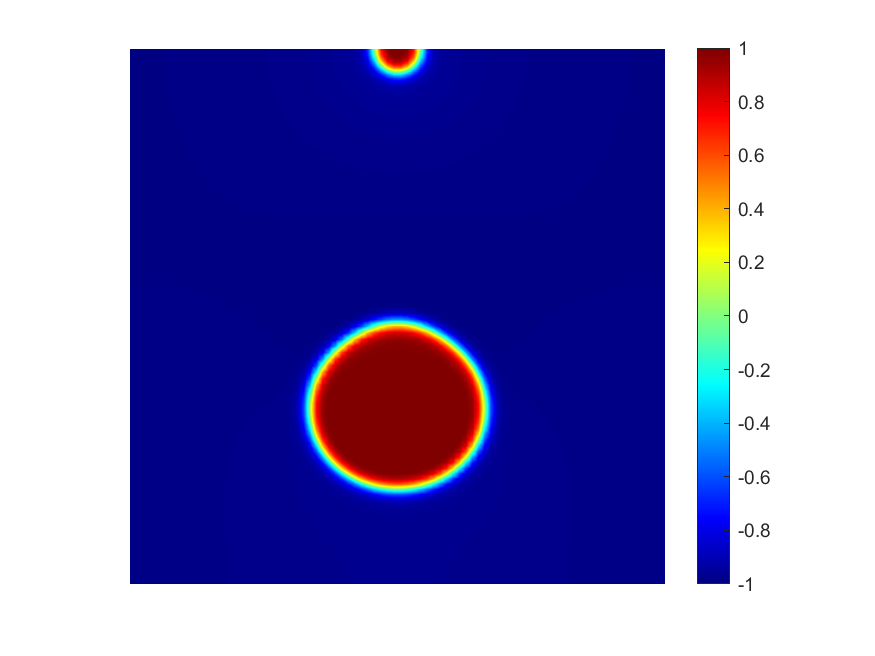}
 		\end{minipage}
 	}
 	\subfigure[$\nu=0.01$,\ $T$=0.2]{
 		\begin{minipage}[c]{0.3\textwidth}
 			\includegraphics[width=1.1\textwidth]{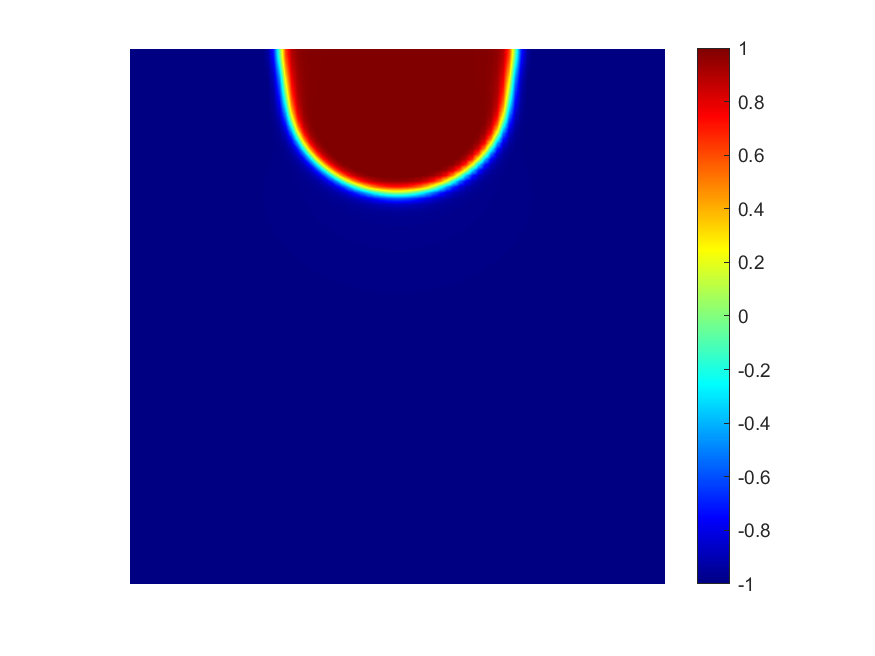}
 		\end{minipage}
 	}
 	\subfigure[$\nu=0.01$,\ $T$=0.4]{
 		\begin{minipage}[c]{0.3\textwidth}
 			\includegraphics[width=1.1\textwidth]{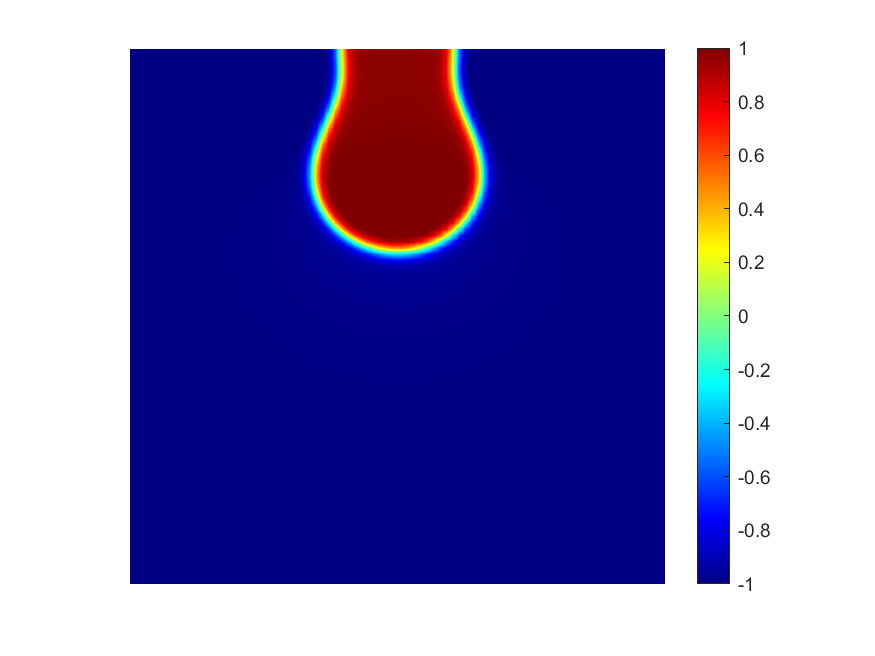}
 		\end{minipage}
 	}
 	\subfigure[$\nu=0.01$,\ $T$=0.6]{
 		\begin{minipage}[c]{0.3\textwidth}
 			\includegraphics[width=1.1\textwidth]{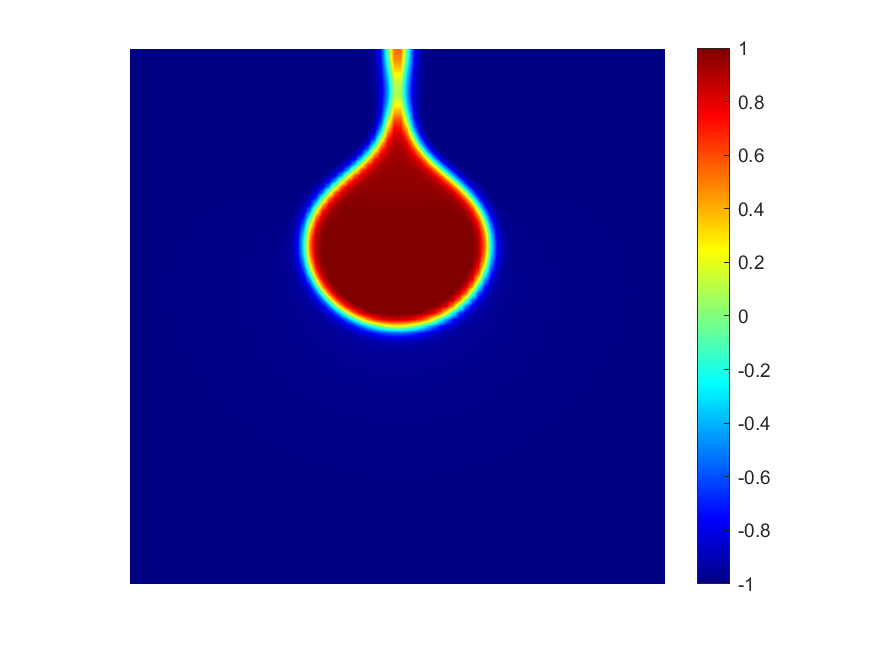}
 		\end{minipage}
 	}
 	\subfigure[$\nu=0.005$,\ $T$=0.2]{
 	\begin{minipage}[c]{0.3\textwidth}
 		\includegraphics[width=1.1\textwidth]{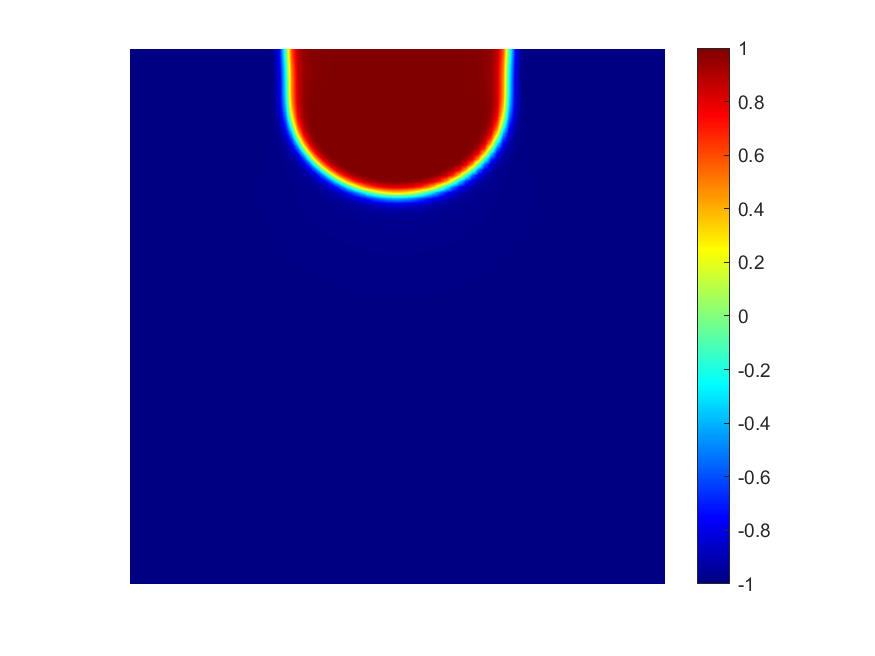}
 	\end{minipage}
 }
 \subfigure[$\nu=0.005$,\ $T$=0.4]{
 	\begin{minipage}[c]{0.3\textwidth}
 		\includegraphics[width=1.1\textwidth]{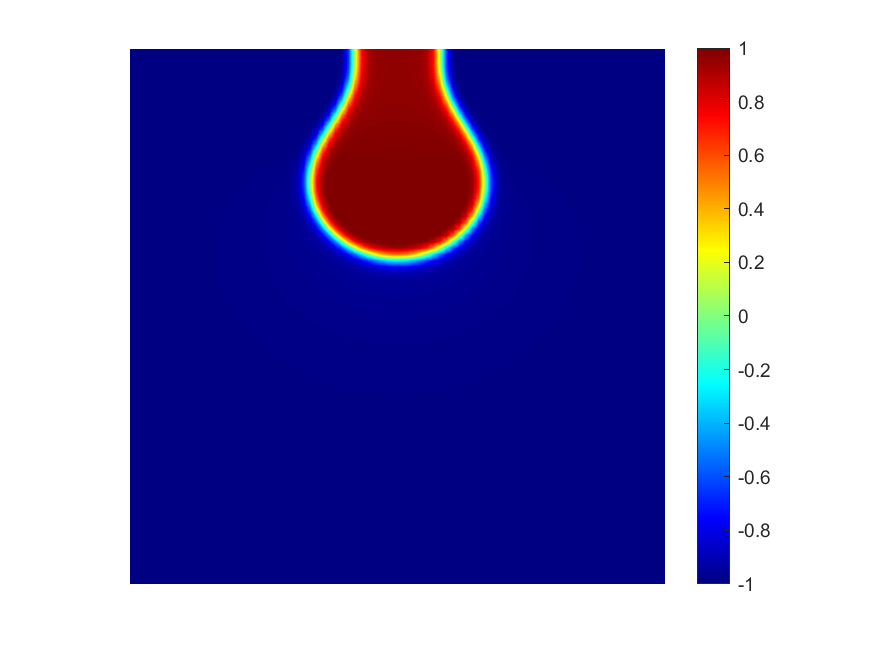}
 	\end{minipage}
 }
 \subfigure[$\nu=0.005$,\ $T$=0.6]{
 	\begin{minipage}[c]{0.3\textwidth}
 		\includegraphics[width=1.1\textwidth]{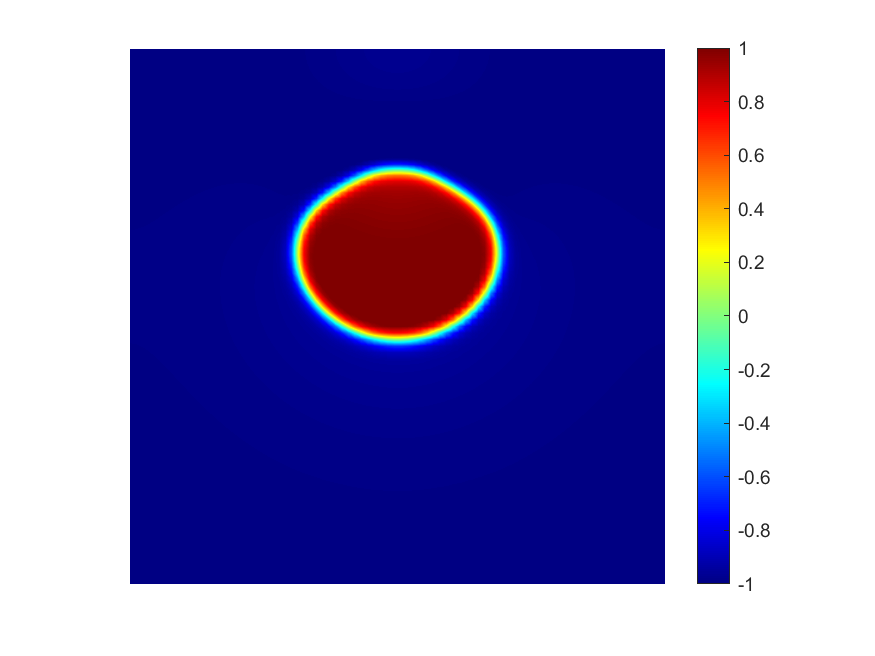}
 	\end{minipage}
 }
 	{\caption{Snapshots of the phase function $\phi$ at different $T$.}\label{figure6}}
 \end{figure}


 \begin{figure}[!t]
	\centering 
	\subfigure[$T$=0.2]{
		\begin{minipage}[c]{0.3\textwidth}
			\includegraphics[width=1.1\textwidth]{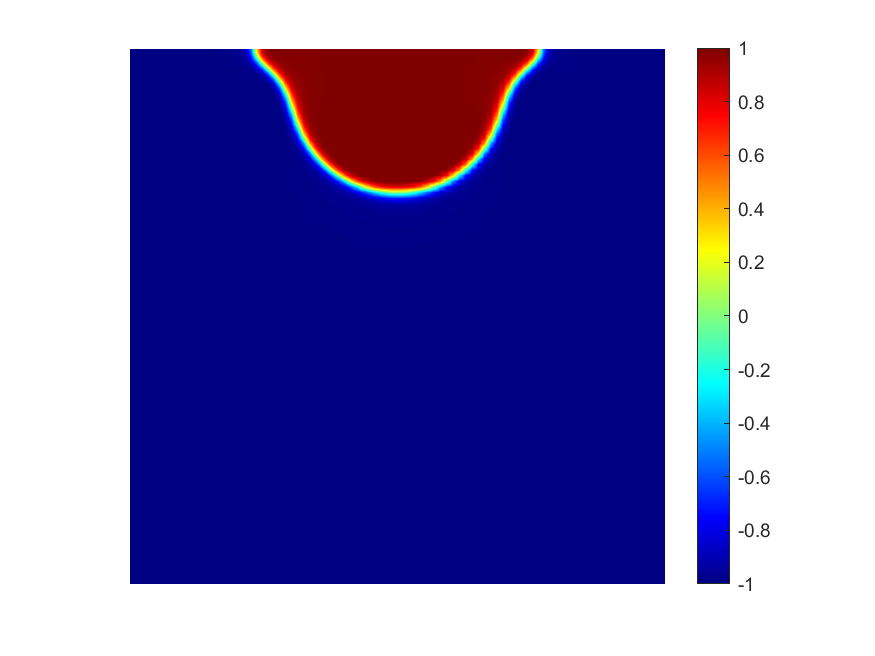}
		\end{minipage}
	}
	\subfigure[$T$=0.4]{
		\begin{minipage}[c]{0.3\textwidth}
			\includegraphics[width=1.1\textwidth]{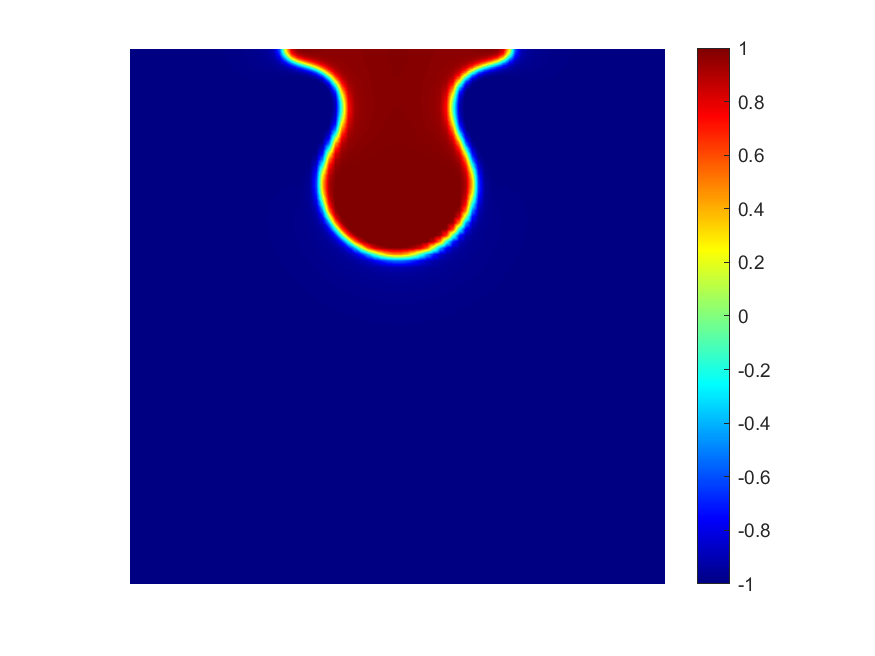}
		\end{minipage}
	}
	\subfigure[$T$=0.6]{
		\begin{minipage}[c]{0.3\textwidth}
			\includegraphics[width=1.1\textwidth]{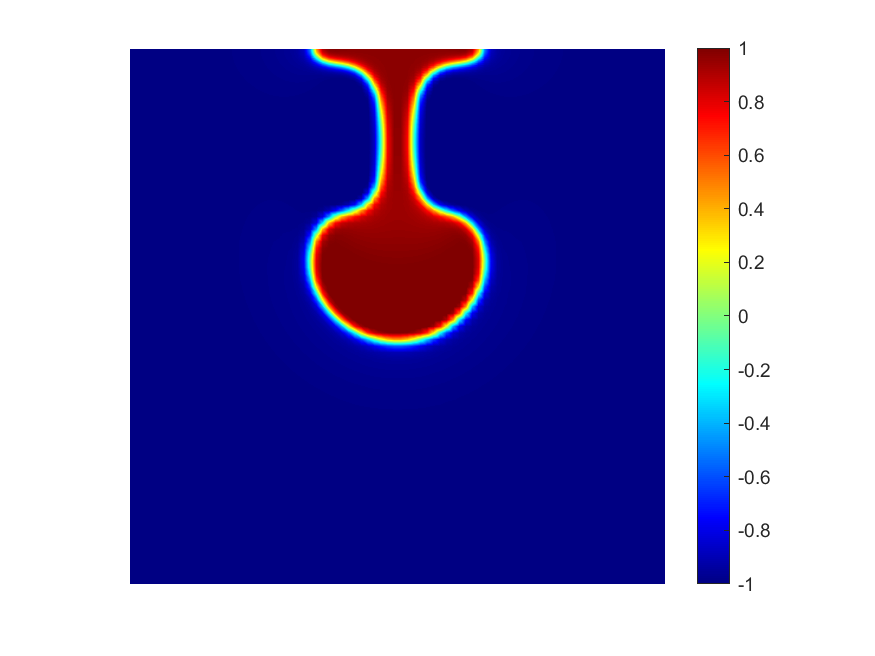}
		\end{minipage}
	}
	\subfigure[$T$=0.7]{
		\begin{minipage}[c]{0.3\textwidth}
			\includegraphics[width=1.1\textwidth]{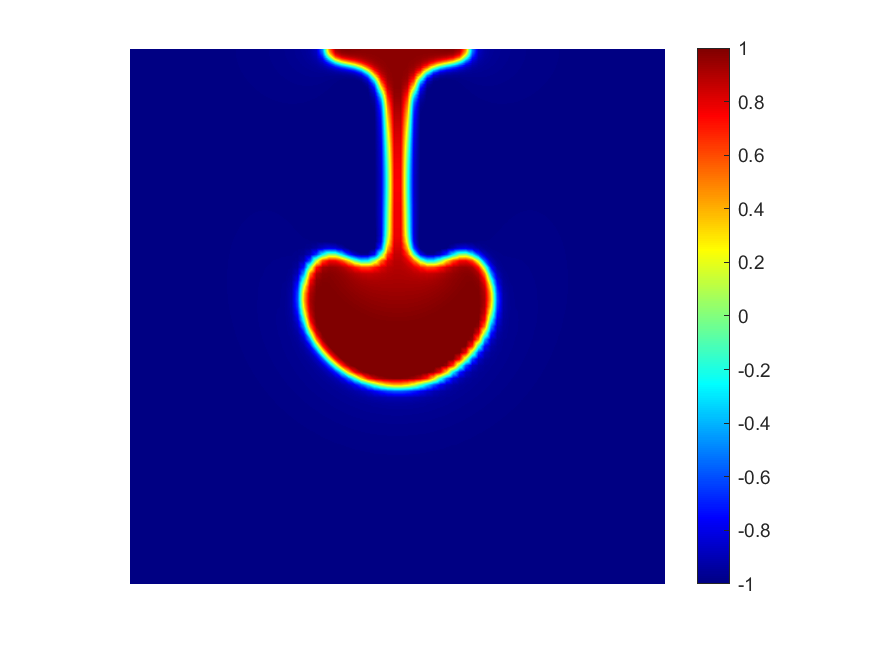}
		\end{minipage}
	}
	\subfigure[$T$=0.8]{
		\begin{minipage}[c]{0.3\textwidth}
			\includegraphics[width=1.1\textwidth]{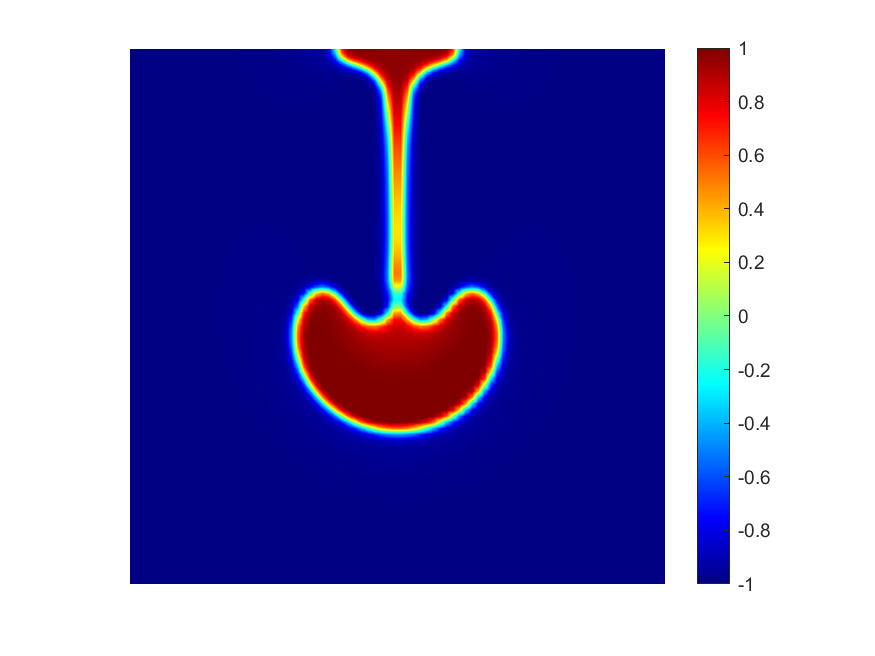}
		\end{minipage}
	}
	\subfigure[$T$=0.9]{
		\begin{minipage}[c]{0.3\textwidth}
			\includegraphics[width=1.1\textwidth]{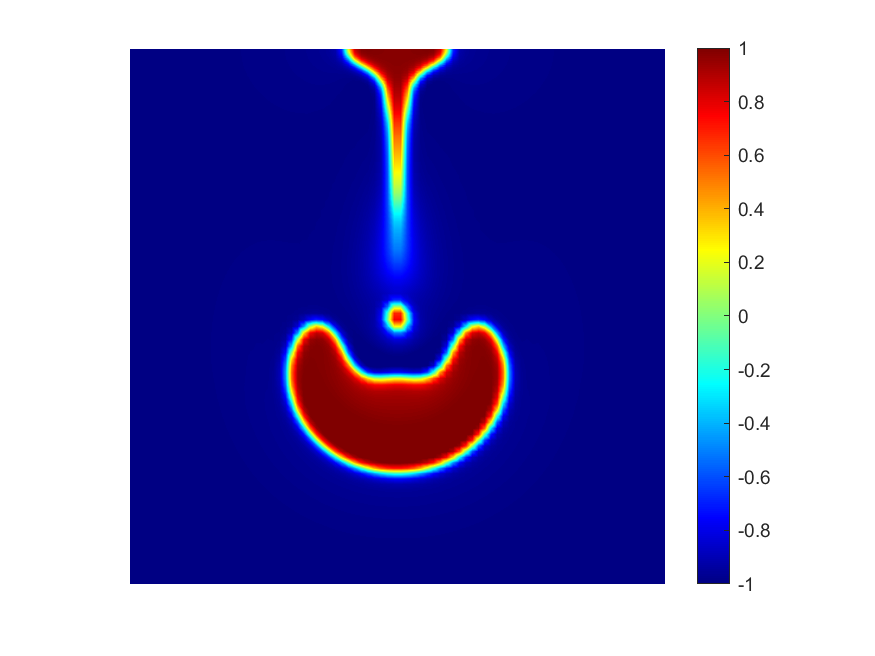}
		\end{minipage}
	}
	{\caption{Snapshots of the phase function $\phi$ at different $T$.}\label{figure5}}
\end{figure}

 \begin{figure}[!t]
	\centering 
	\subfigure[$T$=0.2]{
		\begin{minipage}[c]{0.3\textwidth}
			\includegraphics[width=1.1\textwidth]{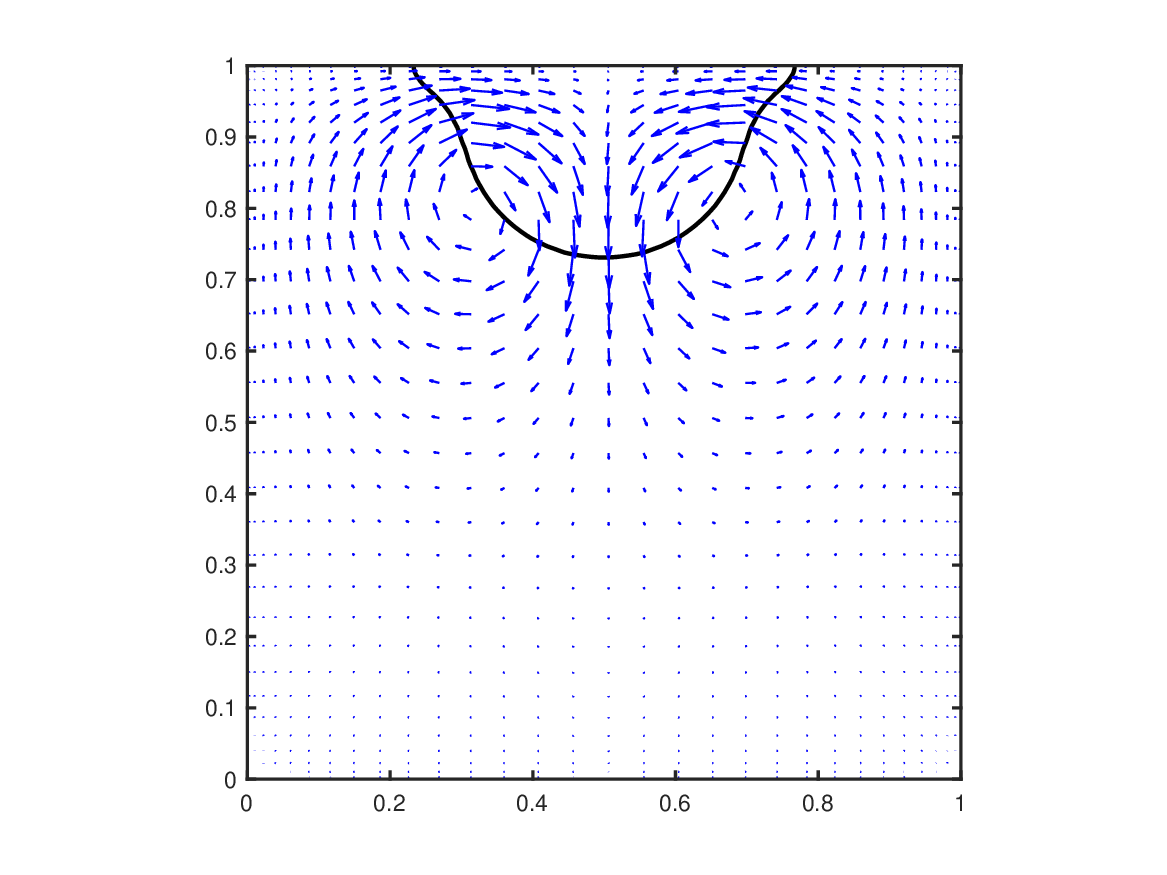}
		\end{minipage}
	}
	\subfigure[$T$=0.4]{
		\begin{minipage}[c]{0.3\textwidth}
			\includegraphics[width=1.1\textwidth]{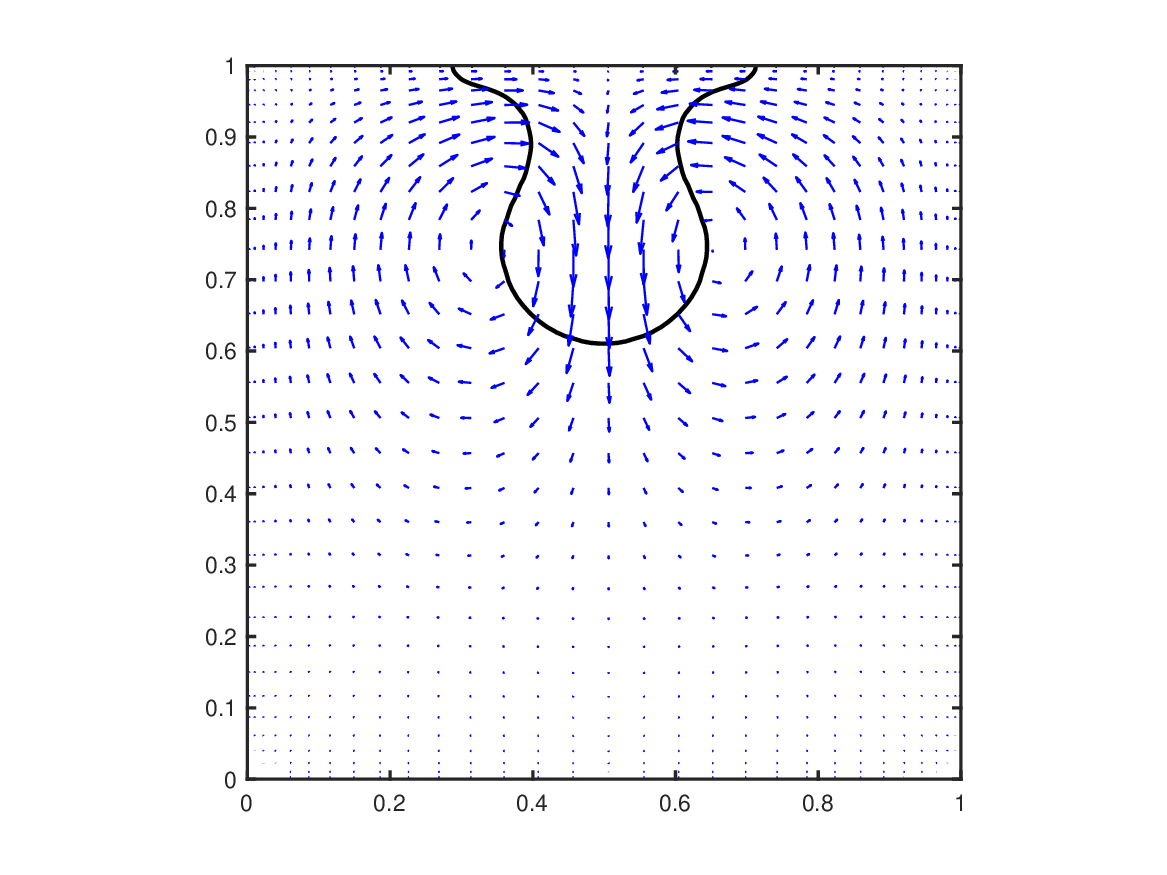}
		\end{minipage}
	}
	\subfigure[$T$=0.6]{
		\begin{minipage}[c]{0.3\textwidth}
			\includegraphics[width=1.1\textwidth]{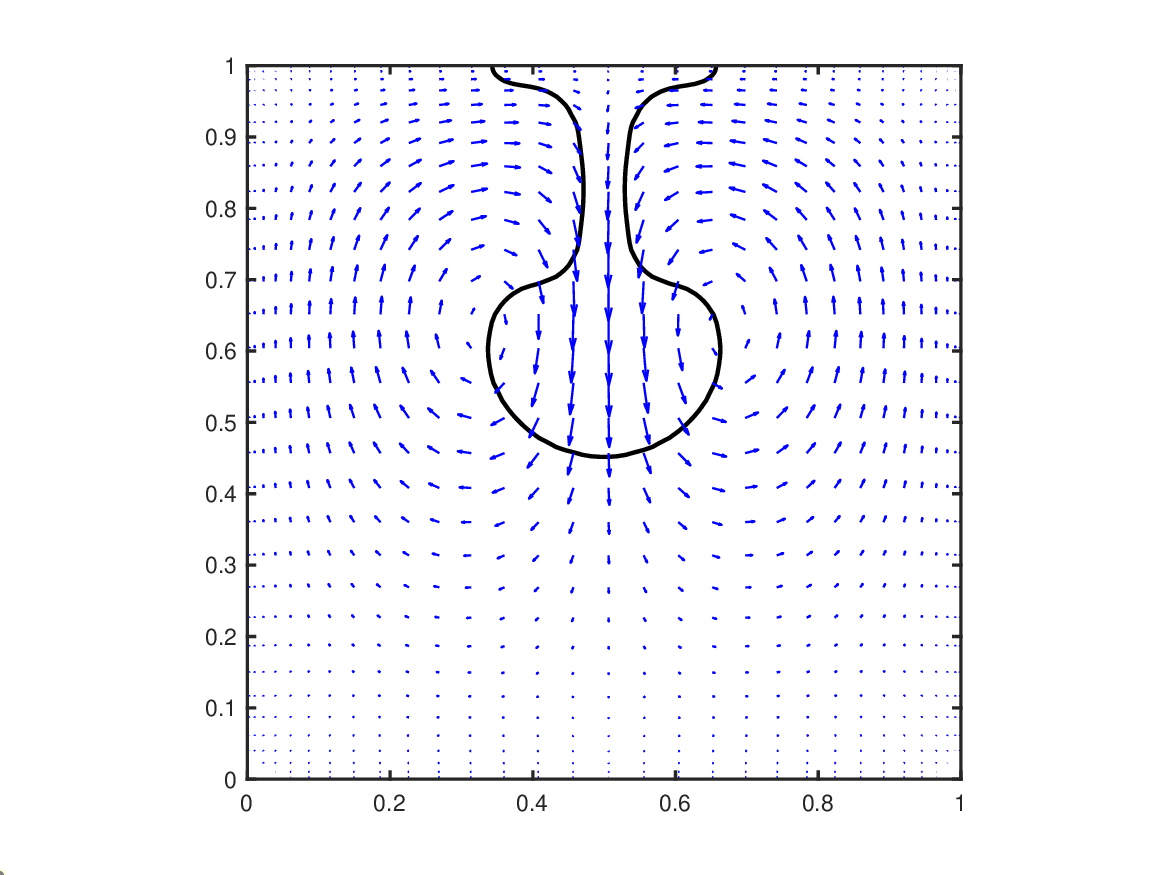}
		\end{minipage}
	}
	\subfigure[$T$=0.7]{
		\begin{minipage}[c]{0.3\textwidth}
			\includegraphics[width=1.1\textwidth]{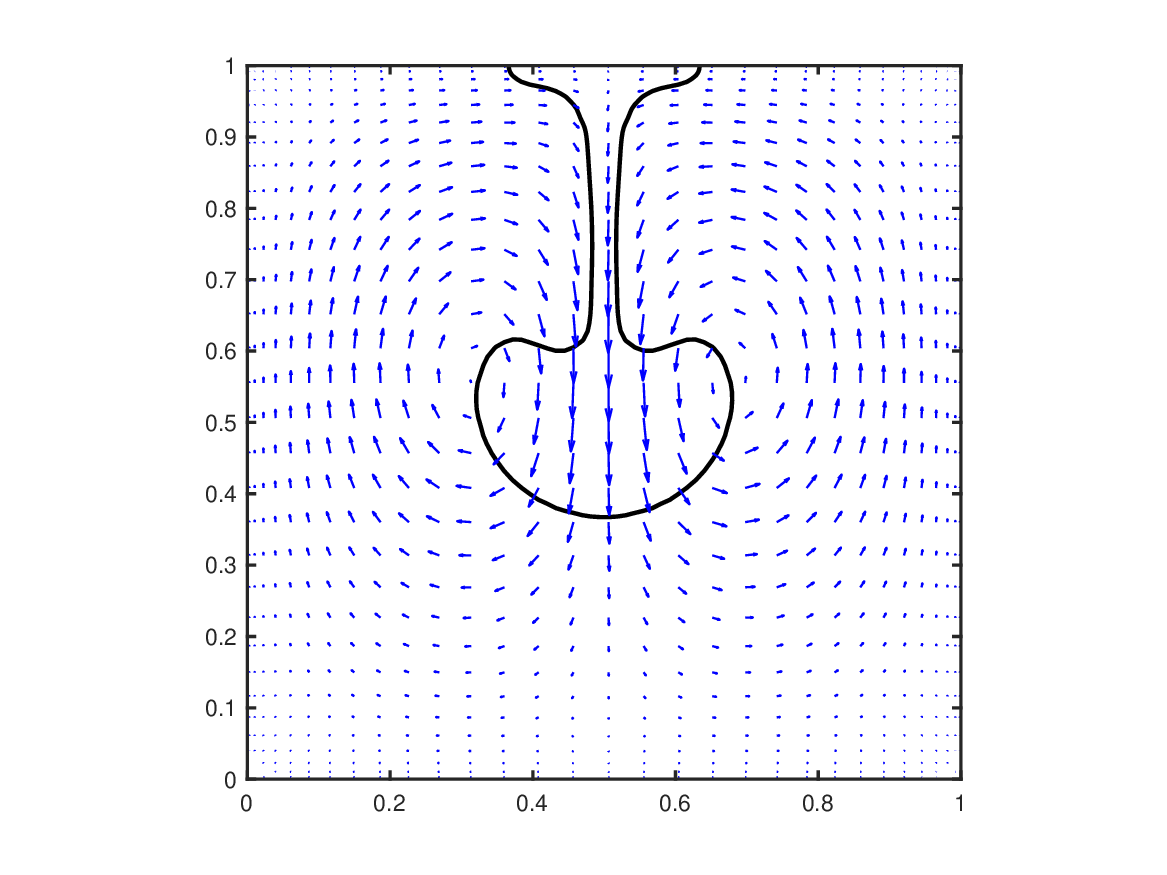}
		\end{minipage}
	}
	\subfigure[$T$=0.8]{
		\begin{minipage}[c]{0.3\textwidth}
			\includegraphics[width=1.1\textwidth]{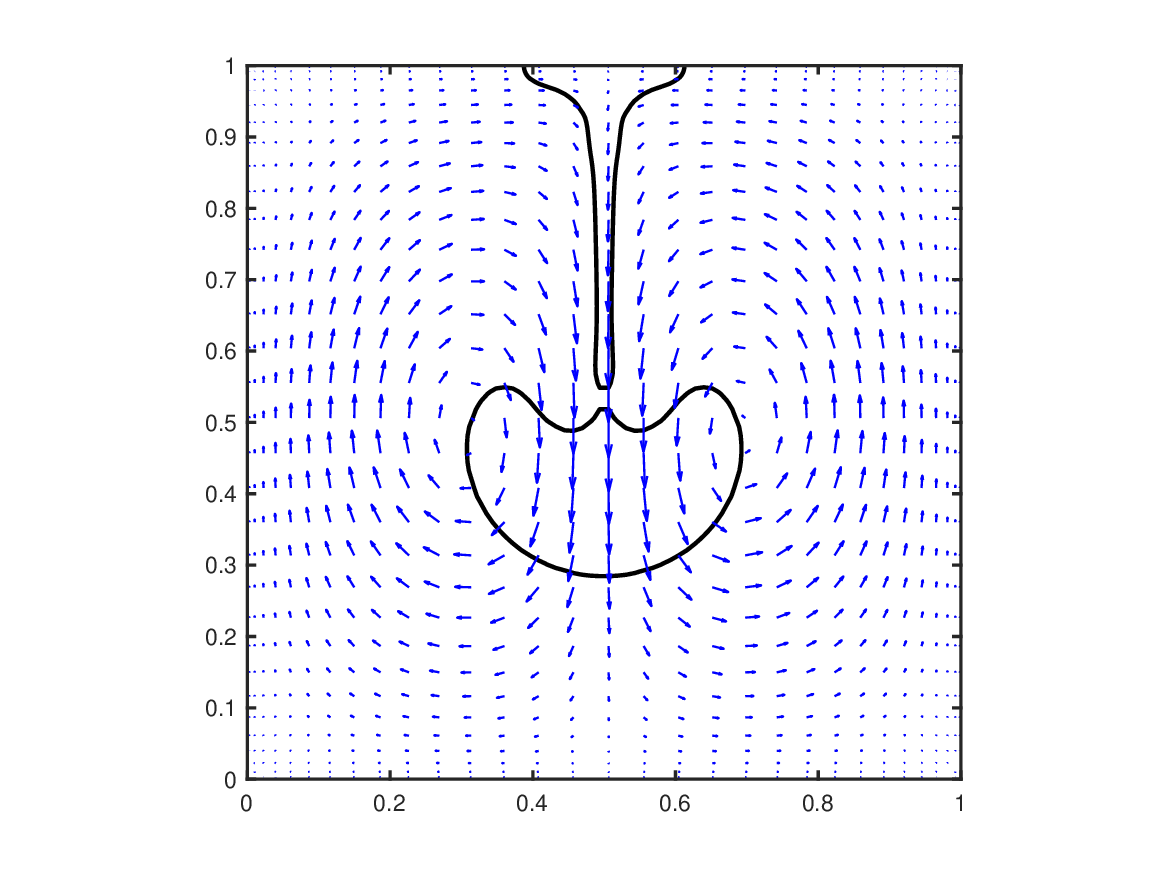}
		\end{minipage}
	}
	\subfigure[$T$=0.9]{
		\begin{minipage}[c]{0.3\textwidth}
			\includegraphics[width=1.1\textwidth]{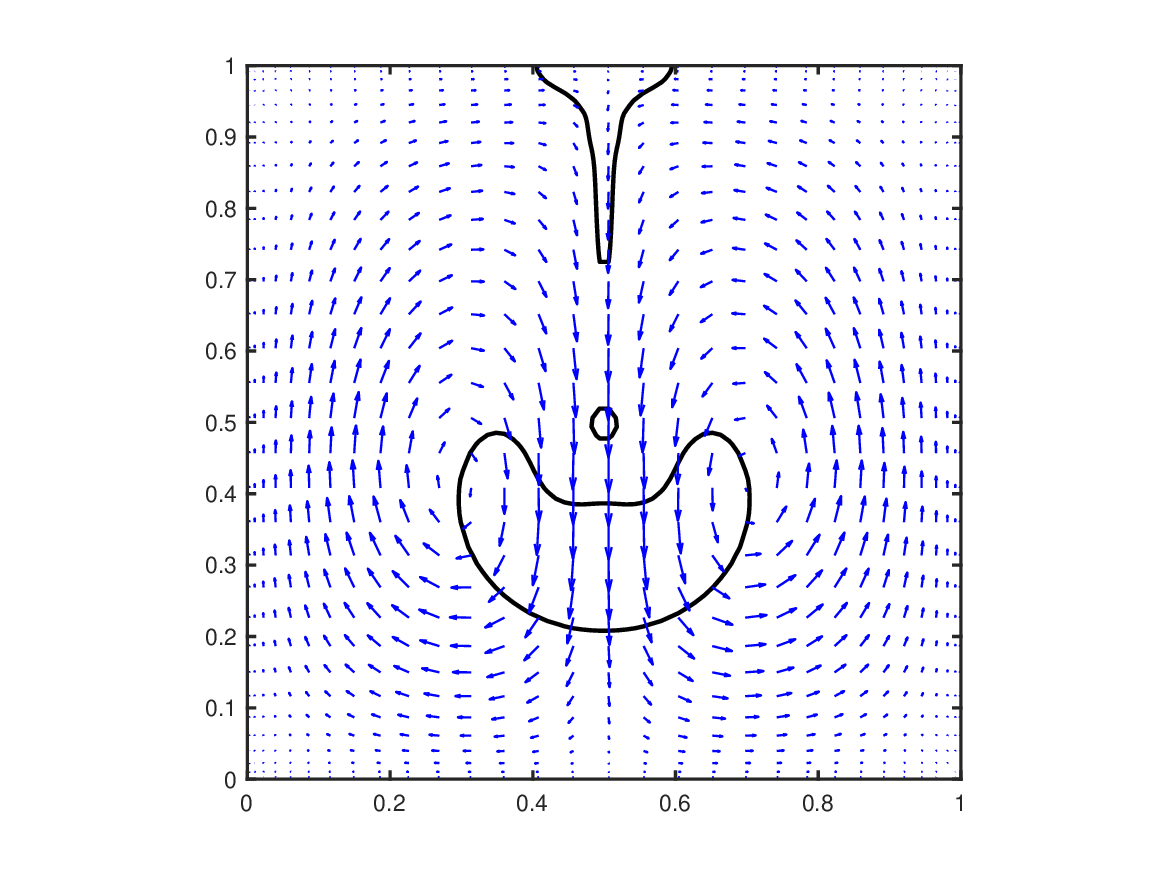}
		\end{minipage}
	}
	{\caption{Numerical velocity at different $T$.}\label{figure7}}
\end{figure}
\section{Concluding remarks}
In this paper we constructed novel fully decoupled and high-order IMEX schemes for the two-phase incompressible flows based on the EOP-GSAV method for Cahn-Hilliard equation and consistent splitting method for Navier-Stokes equation. The resulting high-order schemes are fully decoupled, linear, unconditional energy stable and only require solving several elliptic equations with constant coefficients at each time step. So they are very efficient and easy to implement. We also carried out a rigorous error analysis for the first-order scheme and derived optimal error estimates for all relevant function in different norms  in two and three-dimensional cases.

\bibliographystyle{siamplain}
\bibliography{CHNS_Decoulped_SAV}

\end{document}